\documentclass[11pt,reqno]{amsart}
\usepackage[utf8]{inputenc}
\usepackage{mathtools,graphicx,xcolor,float,minipage-marginpar,blindtext,amssymb,amsmath,amsthm,cleveref,enumitem}
\graphicspath{{image/}}
\usepackage{wrapfig}
\usepackage[mathscr]{euscript}
\usepackage[a4paper, hmargin=2cm, vmargin = 3cm]{geometry}

\allowdisplaybreaks[4]
\newcommand\norm[1]{\left\|#1\right\|}

\makeatletter\newsavebox{\@brx}\newcommand{\llangle}[1][]{\savebox{\@brx}{\(\m@th{#1\langle}\)}%
  \mathopen{\copy\@brx\kern-0.5\wd\@brx\usebox{\@brx}}}
\newcommand{\rrangle}[1][]{\savebox{\@brx}{\(\m@th{#1\rangle}\)}%
  \mathclose{\copy\@brx\kern-0.5\wd\@brx\usebox{\@brx}}}
\makeatother

\makeatletter\newsavebox{\@brxx}\newcommand{\lbbrac}[1][]{\savebox{\@brx}{\(\m@th{#1[}\)}%
  \mathopen{\copy\@brx\kern-0.5\wd\@brx\usebox{\@brx}}}
\newcommand{\rbbrac}[1][]{\savebox{\@brx}{\(\m@th{#1]}\)}%
  \mathclose{\copy\@brx\kern-0.5\wd\@brx\usebox{\@brx}}}
\makeatother

\let\oldsqrt\sqrt
\def\sqrt{\mathpalette\DHLhksqrt}
\def\DHLhksqrt#1#2{%
\setbox0=\hbox{$#1\oldsqrt{#2\,}$}\dimen0=\ht0
\advance\dimen0-0.2\ht0
\setbox2=\hbox{\vrule height\ht0 depth -\dimen0}%
{\box0\lower0.4pt\box2}}

\makeatletter
\renewcommand*\env@matrix[1][*\c@MaxMatrixCols c]{%
  \hskip -\arraycolsep
  \let\@ifnextchar\new@ifnextchar
  \array{#1}}
\makeatother

\def\a{\alpha}
\def\b{\beta}
\def\x{\chi}
\def\p{\varphi}
\def\q{\psi}
\def\l{\lambda}
\def\g{\gamma}
\def\r{\rho}
\def\s{\sigma}
\def\d{\delta}
\def\z{\zeta}
\def\th{\theta}
\def\P{\Phi}
\def\Q{\Psi}
\def\D{\Delta}
\def\rmd{\mathrm{d}}
\def\NN{\mathbb{N}}
\def\ZZ{\mathbb{Z}}
\def\RR{\mathbb{R}}
\def\CC{\mathbb{C}}
\def\DD{\mathbb{D}}
\def\BB{\mathbb{B}}
\def\SS{\mathbb{S}}
\def\TT{\mathbb{T}}
\def\HH{\mathbb{H}}
\def\cC{\mathcal{C}}
\def\cZ{\mathcal{Z}}
\def\rms{\mathrm{s}}

\newtheorem{theorem}{Theorem}[section]
\newtheorem{lemma}[theorem]{Lemma}
\newtheorem{corollary}[theorem]{Corollary}

\theoremstyle{definition}
\newtheorem{definition}[theorem]{Definition}
\newtheorem{remark}[theorem]{Remark}

\numberwithin{equation}{section}

\renewcommand{\tilde}{\widetilde}
\renewcommand{\hat}{\widehat}
\renewcommand{\Re}{\mathrm{Re}\,}
\renewcommand{\Im}{\mathrm{Im}}

\usepackage{xpatch}
\makeatletter   
\xpatchcmd{\@tocline}
{\hfil\hbox to\@pnumwidth{\@tocpagenum{#7}}\par}
{\ifnum#1<0\hfill\else\dotfill\fi\hbox to\@pnumwidth{\@tocpagenum{#7}}\par}
{}{}
\makeatother

\makeatletter
 \def\l@subsection{\@tocline{2}{0pt}{4pc}{6pc}{}}
\def\l@subsubsection{\@tocline{3}{0pt}{8pc}{8pc}{}}
 \makeatother

\title[Quaternionic Orthogonal Polynomials, a S--V Theorem \& Baxter's Theorem]{Orthogonal Polynomials, a Szeg\H{o}--Verblunsky Theorem and Baxter's Theorem on the Quaternionic Sphere}
\date{\today}
\author[C. J. Gauntlett]{Connor J. Gauntlett}
\address{(CJG) School of Mathematics and Statistics\\
Newcastle University\\
Newcastle upon Tyne NE1 7RU UK}
\email{c.gauntlett@newcastle.ac.uk}

\author[D. P. Kimsey]{David P. Kimsey}
\address{(DPK) School of Mathematics and Statistics\\
Newcastle University\\
Newcastle upon Tyne NE1 7RU UK}
\email{david.kimsey@newcastle.ac.uk}
\subjclass[2020]{42C05, 30G35, 47A57}
\keywords{orthogonal polynomials, slice hyperholomorphic functions, Szeg\H{o}'s theorem, Baxter's theorem}

\begin{document}

\begin{abstract}
    We introduce a theory of orthogonal polynomials on the unit sphere of the quaternions based on the notion of a $q$-positive measure (which originated in a work of Alpay, Colombo, the second author and Sabadini). The results we extend to this setting include the Szeg\H{o} recurrences, the Zeros Theorem for orthogonal polynomials, the Szeg\H{o}--Verblunsky theorem, and Baxter's theorem; to obtain these results, we utilise the Verblunsky coefficients (or Schur parameters) of Alpay, Colombo and Sabadini and a number of established results in the matricial setting. Our approach also requires matrix-valued analogues of Schur's recurrences for the coefficients of a Schur function and of Verblunsky's formula for the moments of a measure, which appear to be new. 
\end{abstract}

\maketitle

\tableofcontents

\section{Introduction}

This paper generalises many ideas from the theory of orthogonal polynomials on the unit circle, including Szeg\H{o}'s recursion, the Zeros Theorem for orthogonal polynomials on the unit circle, the Szeg\H{o}--Verblunsky theorem and Baxter's theorem, to a quaternionic setting. In this setting, the usual positive measure on the circle is replaced with a \emph{$q$-positive measure}, a quaternionic-valued measure which satisfies a certain positivity property that first appeared in \cite{ACKS15} (see \Cref{def:qPosMeasure} below). This theory is intricately linked with that of matrix-valued orthogonal polynomials on the unit circle, and so we shall often need to additionally study that setting. We also remark that the first results of this paper, which constitute the body of \Cref{sec:SchurRecVerblunskyForm}, may be of independent interest to experts in the field of matrix orthogonal polynomials on the unit circle.

Let us first set the stage with the relevant classical theory of orthogonal polynomials on the unit circle, drawing primarily from \cite{Sim05a}. Let \(\mu\) be a probability measure on the unit circle \(\TT\), and uniquely associate to \(\mu\) a number of objects as follows. Define the \emph{Herglotz function} of \(\mu\) --- a function \(F : \DD \to \CC\) with \(\Re F(z) \geq 0\) for \(z \in \DD\) --- to be the Riesz--Herglotz transform of \(\mu\):
\[
    F(z) := \int_0^{2\pi} \frac{1 + ze^{-i\th}}{1 - ze^{-i\th}} \, \rmd\mu(\th).
\]
Recall that the \emph{moments} of \(\mu\) are
\[
    c_n := \int_0^{2\pi} e^{-in\th} \, \rmd\mu(\th)
\]
for \(n \in \ZZ\); one may show that \(F\) has Taylor series
\[
    F(z) = 1 + 2 \sum_{k=1}^{\infty} c_n z^n.
\]

We obtain from \(F\) the \emph{Schur function} of \(\mu\) by taking the Cayley transform, i.e. the conformal mapping from the right half-plane to the unit disc, of \(F\): \(f : \DD \to \CC\) with \(\lvert f(z) \rvert \leq 1\) for \(z \in \DD\) is such that
\[
    F(z) = \frac{1 + f(z)}{1 - f(z)}.
\]
The \emph{Schur algorithm} is a recursive algorithm which takes a Schur function and outputs a sequence \((f_n)_{n=0}^{\infty}\) of Schur functions, called the \emph{Schur iterates} of the input. If the Schur iterates are Schur functions \((f_n)_{n=0}^{\infty}\) with \(f_0 := f\), then the \emph{Verblunsky coefficients} (also called \emph{Schur parameters}) of \(\mu\) are the elements of the sequence \((\g_n)_{n=0}^{\infty}\) given by \(\g_n := f_n(0)\), and the Schur algorithm obtains \(f_{n+1}\) from \(f_n\) and \(\g_n\) via
\[
    f_{n+1}(z) := \frac{1}{z} \frac{f_n(z) - \g_n}{1 - \overline{\g_n}f_n(z)}.
\]

Schur also proved \cite{Sch86a} the \emph{Schur recurrences} for the Taylor series coefficients of the Schur function \(f\) in terms of the Verblunsky coefficients. Namely, if \(f\) has coefficients \((s_n(f))_{n=0}^{\infty}\), then Schur showed that
\[
    s_n(f) = \g_n \prod_{k=0}^{n-1} (1 - \lvert \g_k \rvert^2) + r_n(\g_0, \ldots, \g_{n-1}, \overline{\g_0}, \cdots, \overline{\g_{n-1}})
\]
for some multivariate polynomials \(r_n\). Schur's recurrences may be used to further see the following:
\[
    c_n = \g_{n-1} \prod_{k=0}^{n-2} (1 - \lvert \g_k\rvert^2) + \tilde{r}_{n-1}(\g_0, \ldots, \g_{n-2}, \overline{\g_0}, \cdots, \overline{\g_{n-2}}),
\]
where \(\tilde{r}_n\) are some multivariate polynomials, distinct from \(r_n\). Simon \cite{Sim05a} attributes this latter formula to Schur in 1917; later, it was independently shown by Verblunsky (see \cite[Theorem 1]{Ver35a}). We remark that Verblunsky did not yet have the connection between the coefficients he was considering and those pioneered by Schur, and thus Verblunsky did not use Schur's recurrences to prove the formula. The proof we refer to here is due to Simon, whose nomenclature we borrow to refer to this formula as \emph{Verblunsky's formula}.

In parallel, one may obtain orthonormal polynomials with respect to \(\mu\), \((\p_n)_{n=0}^{\infty}\), by performing the Gram-Schmidt algorithm on the set of monomials \(\{1, z, z^2, \ldots\}\) ordered in the usual fashion with respect to the inner product given by
\[
    \langle p, q \rangle_\mu := \int_0^{2\pi} p(e^{i\th}) \overline{q(e^{i\th})} \, \rmd\mu(\th).
\]
Of course, this is only a genuine inner product when \(\langle \cdot, \cdot \rangle_\mu\) is a positive definite form, which happens if and only if the underlying measure \(\mu\) is not a finite sum of atomic measures (some sources, including \cite{Sim05a}, refer to this condition as \(\mu\) being \emph{non-trivial}, a term we shall adopt for an analogous condition in the quaternions).

The Verblunsky coefficients also arise as the coefficients of paired recurrences between the orthonormal polynomials and their reverse polynomials. Recall that the reverse polynomial of a polynomial \(p(z) = \sum_{k=0}^{n} p_k z^k\) is the polynomial \(p^{\#}\) with \(k^{\text{th}}\) coefficient \(\overline{p_{n-k}}\), \(k = 0, \ldots, n\), and is given by the formula
\[
    p^{\#}(z) = z^n \overline{p(1/\overline{z})}.
\]
The \emph{Szeg\H{o} recurrences}, first proved by Szeg\H{o} \cite{Sze75}, relate the orthonormal polynomials, their reverse polynomials, and the Verblunsky coefficients via
\[
    \p_{n+1}(z) = \frac{z \p_n(z) - \overline{\g}_n \p_n^{\#}(z)}{\sqrt{1 - \lvert \g_n \rvert^2}}
\]
and
\[
    \p^{\#}_{n+1}(z) = \frac{\p^{\#}(z) - \g_n z \p_n(z)}{\sqrt{1 - \lvert \g_n \rvert^2}}.
\]
It is a theorem of Geronimus \cite{Ger43} that the coefficients appearing in the Szeg\H{o} recurrences do indeed coincide with the output of the Schur algorithm. The Szeg\H{o} recurrences are highly versatile in the theory; for example, they may be used to directly prove the Christoffel--Darboux formula, which relates the orthonormal polynomials and Verblunsky coefficients of a measure to a particular sequence of reproducing kernels.

Another application of the Szeg\H{o} recurrences is in proving the \emph{Zeros Theorem} for orthogonal polynomials: if \((\p_n)_{n=0}^{\infty}\) are the orthonormal polynomials of some (probability) measure on the unit circle, then the zeros of \(\p_n\) lie strictly inside the open unit disc, and as a corollary, the zeros of their reverse polynomials \((\p_n^{\#})_{n=0}^{\infty}\) lie strictly outside the closed unit disc. This result dates back to Geronimus \cite{Ger46}; see also Theorem 1.7.1 of \cite{Sim05a} for a comprehensive evaluation of six different proofs.

One of the headline results in this field, and of particular interest to us, is the Szeg\H{o}--Verblunsky theorem, which goes as follows.
\begin{theorem}[Szeg\H{o}--Verblunsky]
    Let \(\mu\) be a non-finitely atomic probability measure on \(\TT\) with Lebesgue decomposition \(\rmd\mu = w \, \frac{\rmd\th}{2\pi} + \rmd\mu_\rms\) and Verblunsky coefficients \((\g_n)_{n=0}^{\infty}\). Then
    \[
        \prod_{n=0}^{\infty} (1 - \lvert \g_n \rvert^2) = \exp\left( \int_0^{2\pi} \log\big(w(e^{i\th})\big) \, \frac{\rmd\th}{2\pi}\right).
    \]
\end{theorem}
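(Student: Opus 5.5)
Following the standard route in \cite{Sim05a}, I would prove both sides equal a common variational quantity. Let $\Phi_n$ be the monic orthogonal polynomials, so $\p_n = \Phi_n/\norm{\Phi_n}_\mu$. Set
\[
    \norm{\Phi_n}_\mu^2 = \min\Big\{ \int_0^{2\pi} \lvert p(e^{i\th})\rvert^2 \, \rmd\mu(\th) : p \text{ monic of degree } n \Big\}.
\]
The monic form of the Szeg\H{o} recurrences, $\Phi_{n+1}(z) = z\Phi_n(z) - \overline{\g_n}\Phi_n^{\#}(z)$, together with $\norm{\Phi_n^{\#}}_\mu = \norm{\Phi_n}_\mu$ and $\langle z\Phi_n, \Phi_n^{\#}\rangle_\mu = \g_n\norm{\Phi_n}_\mu^2$, gives
\[
    \norm{\Phi_{n+1}}_\mu^2 = (1-\lvert\g_n\rvert^2)\norm{\Phi_n}_\mu^2 .
\]
Hence $\prod_{k=0}^{n-1}(1-\lvert \g_k\rvert^2) = \norm{\Phi_n}_\mu^2$. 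Write $\Phi_n^{\#}(z)=z^n\overline{\Phi_n(1/\overline z)}$; on $\TT$ we have $\lvert\Phi_n^{\#}\rvert=\lvert\Phi_n\rvert$, and $\Phi_n^{\#}(0)=1$. Minimizing over monic polynomials of degree $n$ is therefore the same as minimizing $\int \lvert q\rvert^2\,\rmd\mu$ over polynomials $q$ of degree at most $n$ with $q(0)=1$. These minima decrease in $n$, so the infinite product equals
\[
    \inf\Big\{\int \lvert q\rvert^2 \,\rmd\mu : q \text{ a polynomial},\ q(0)=1\Big\}.
\]
This is Szeg\H{o}'s extremal problem. It remains to show this infimum equals $\exp\big(\int\log w\,\frac{\rmd\th}{2\pi}\big)$.

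\textbf{Step one: remove the singular part.} Let $E$ be a Lebesgue-null set carrying $\mu_\rms$. I would construct polynomials $q_k$ with $q_k(0)=1$, $q_k\to 0$ $\mu_\rms$-almost everywhere, and $q_k \to 1$ in a sense strong enough that $\int \lvert q q_k\rvert^2 w\,\frac{\rmd\th}{2\pi}\to \int \lvert q\rvert^2 w\,\frac{\rmd\th}{2\pi}$ for each fixed polynomial $q$. The construction would take a compact $K\subset E$ of nearly full $\mu_\rms$-measure and an outer function that is small on a neighbourhood of $K$ and close to $1$ elsewhere, normalised at $0$, and then approximate it by polynomials. This reduces the problem to the absolutely continuous measure $w\,\frac{\rmd\th}{2\pi}$. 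I expect this step to be the main technical obstacle. An alternative is to use the Riesz brothers' theorem in the form of a duality argument: $\inf_q \int \lvert 1-zp\rvert^2\,\rmd\mu$ is a distance to $zH^2(\mu)$, which kills singular parts.

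\textbf{Step two: the lower bound.} For $q$ a polynomial with $q(0)=1$, Jensen's inequality and $\log\lvert q(0)\rvert \le \int \log\lvert q\rvert\,\frac{\rmd\th}{2\pi}$ (by subharmonicity) give
\[
    \log\int\lvert q\rvert^2 w\,\frac{\rmd\th}{2\pi} \;\ge\; \int \big(2\log\lvert q\rvert+\log w\big)\,\frac{\rmd\th}{2\pi} \;\ge\; \int\log w\,\frac{\rmd\th}{2\pi}.
\]
This already settles the case $\int\log w=-\infty$ for the lower bound, which in that case is trivial.

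\textbf{Step three: the upper bound.} When $\log w\in L^1$, let $D$ be the outer function with $\lvert D\rvert^2=w$ almost everywhere on $\TT$. Then $D(0)^2=\exp\int\log w\,\frac{\rmd\th}{2\pi}$ and $1/D$ is outer. Approximate $D(0)/D$ by polynomials $q$ with $q(0)=1$ in the weighted norm $\int\lvert\cdot\rvert^2 w$; this is possible because $\int \lvert D(0)/D\rvert^2 w = D(0)^2$, and after truncating $w$ from above and below one can use the density of polynomials in $H^2$. If $\int\log w=-\infty$, apply the same argument to $w+\varepsilon$ and let $\varepsilon\to0$. In either case the infimum is at most $\exp\int\log w\,\frac{\rmd\th}{2\pi}$, which completes the proof.
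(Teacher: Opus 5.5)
The paper does not prove this statement. It is quoted in the introduction as classical background from \cite{Sim05a}. The paper's own result of this type, \Cref{thm:QuaternionicSV}, is obtained by reduction to the matricial theorem of Delsarte--Genin--Kamp, not by a variational argument.

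So there is no internal proof to compare with. Your outline is the standard Szeg\H{o}--Kolmogorov--Krein route, and its skeleton is sound:
\begin{itemize}
\item the identity $\prod_{k<n}(1-\lvert\g_k\rvert^2)=\norm{\Phi_n}_\mu^2$, where Geronimus' theorem identifies the recursion coefficients with the Schur parameters used in the statement;
\item the reduction by reversal to Szeg\H{o}'s extremal problem over polynomials with $q(0)=1$;
\item the Jensen lower bound.
\end{itemize}
Two steps, however, are not justified as written.

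\textbf{Step three.} The reason you give for approximating $D(0)/D$ is not a valid reason. The computation $\int\lvert F(0)/F\rvert^2 w\,\frac{\rmd\th}{2\pi}=\lvert F(0)\rvert^2$ holds equally for $F=DI$ with $I$ a nonconstant inner function, $I(0)\neq 0$. For such $F$, approximation by polynomials with $q(0)=1$ is impossible, since it would beat your own lower bound from Step two. The real input is that $D$ is outer, equivalently Beurling's theorem that $\{qD\}$ is dense in $H^2$.

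Two further problems: density of polynomials in $H^2$ controls only unweighted $L^2$ norms, and truncating $w$ from above lowers the weight, which yields lower rather than upper bounds. A correct version truncates inside the test function instead. Put $w_N:=\min(\max(w,1/N),N)$, let $D_N$ be outer with $\lvert D_N\rvert^2=w_N$, and set $g_N:=D_N(0)/D_N\in H^\infty$. Then $g_N(0)=1$ and
\[
    \int\lvert g_N\rvert^2 w\,\frac{\rmd\th}{2\pi}=\exp\Big(\int\log w_N\,\frac{\rmd\th}{2\pi}\Big)\int\frac{w}{w_N}\,\frac{\rmd\th}{2\pi}\le\exp\Big(\int\log w_N\,\frac{\rmd\th}{2\pi}\Big)\Big(1+\frac1N\int_{\{w>N\}}w\,\frac{\rmd\th}{2\pi}\Big).
\]
The Fej\'er means of $g_N$ are polynomials with value $1$ at $0$, bounded by $\norm{g_N}_\infty$ and converging a.e. Dominated convergence against $w\in L^1$ therefore transfers this bound to polynomials. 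Finally, $\int\log w_N\to\int\log w$: by dominated convergence if $\log w\in L^1$, and otherwise because $\log w_N\le\max(\log w,-\log N)\downarrow\log w$. This also covers the case $\int\log w=-\infty$ without the $w+\varepsilon$ device.

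\textbf{Step one.} Knowing $q_k\to0$ $\mu_\rms$-a.e.\ does not give $\int\lvert qq_k\rvert^2\,\rmd\mu_\rms\to0$ without domination. Also, polynomial approximants of an $H^\infty$ function that converge only Lebesgue-a.e.\ say nothing on the Lebesgue-null set $K$.

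What suffices is $\sup_{\TT}\lvert q_k\rvert\to1$ together with $\int\lvert q_k\rvert^2\,\rmd\mu_\rms\to0$. Only the inequality $\inf_\mu\le\inf_w$ is needed here, since the reverse is trivial. Your outer-function construction delivers these two conditions if you take $\log\lvert h\rvert$ smooth, so that $h$ lies in the disc algebra and can be approximated uniformly.

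Your duality alternative is cleaner and fully rigorous. Let $g=1-f$ be the residual of projecting $1$ onto the $L^2(\mu)$-closure of $\mathrm{span}\{z,z^2,\ldots\}$. Then $\int\overline{z}^n g\,\rmd\mu=0$ for $n\ge1$, so $g\,\rmd\mu$ is absolutely continuous by the F.\ and M.\ Riesz theorem, and hence $g=0$ $\mu_\rms$-a.e. Consequently $g$ is also the corresponding residual in $L^2(w\,\frac{\rmd\th}{2\pi})$, and the two infima coincide.
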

One may apply a well-known lemma of real analysis (see e.g. the remark following Lemma 3.3.5 of \cite{Kat04}) to obtain the following useful consequence of the Szeg\H{o}--Verblunsky theorem, phrased in terms of square-summability of the Verblunsky coefficients:
\[
    \sum_{n=0}^{\infty} \lvert \g_n \rvert^2 < \infty \quad \text{if and only if} \quad \int_0^{2\pi} \log \big(w(e^{i\th})\big) \, \frac{\rmd\th}{2\pi} > -\infty.
\]
Since Verblunsky's theorem \cite{Ver35a} says that any sequence contained within the open unit disc is the sequence of Verblunsky coefficients for some measure on the circle, the Szeg\H{o}--Verblunsky theorem (and its above consequence) may be applied to study square-summability of sequences in \(\DD\) via log-integrability of certain \(L^1(\TT; \rmd\mu)\) functions; conversely, one may study log-integrability via questions of square-summability of sequences in \(\DD\).

A stronger condition one might impose on the Verblunsky coefficients is simply \emph{summability}, i.e. the convergence of the sum
\[
    \sum_{n=0}^\infty \lvert \g_n \rvert.
\]
In this direction, too, a very well-known result exists --- in this case, the result is Baxter's theorem, first proven by Baxter in \cite{Bax61}. The following formulation of Baxter's theorem is taken from \cite{Sim05a}.
\begin{theorem}[Baxter]
    Let \(\mu\) be a non-finitely atomic probability measure on \(\TT\) with Lebesgue decomposition \(\rmd\mu = w \, \frac{\rmd\th}{2\pi} + \rmd\mu_\rms\) and Verblunsky coefficients \((\g_n)_{n=0}^{\infty}\). Then
    \[
        \sum_{n=0}^\infty \lvert \g_n \rvert < \infty
    \]
    if and only if:
    \begin{enumerate}[label={\upshape\roman*)}]
        \item the moments \((c_n)_{n=0}^{\infty}\) of \(\mu\) satisfy
        \[
            \sum_{n=1}^{\infty} \lvert c_n \rvert < \infty,
        \]
        which implies that \(\rmd\mu_\rms = 0\) and \(w\) is continuous on \(\TT\);
        \item \(w(e^{i\th}) > 0\) for all \(\th \in [0,2\pi)\).
    \end{enumerate}
\end{theorem}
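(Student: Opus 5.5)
The natural route is the standard Banach-algebra proof of Baxter's theorem (as in Simon's treatment). I would work in the Wiener algebra \(\mathcal{W}\) of functions on \(\TT\) with absolutely summable Fourier coefficients, and in its analytic subalgebra \(\mathcal{W}_+\). Both directions would go through the orthonormal polynomials \(\p_n\), the reverse polynomials \(\p_n^{\#}\), the Szeg\H{o} recurrences, Verblunsky's formula, and the Zeros Theorem. I would also use the Szeg\H{o}--Verblunsky theorem together with Wiener's \(1/f\) lemma.

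\textbf{Summable coefficients imply (i) and (ii).} Assume \(\sum\lvert\g_n\rvert<\infty\).
\begin{enumerate}[label={\upshape\arabic*.}]
\item Iterating the recurrence for \(\p^{\#}_{n+1}\) and bounding the \(\mathcal{W}\)-norm gives
\[
\|\p_{n+1}^{\#}\|_{\mathcal{W}} \le \frac{1+\lvert\g_n\rvert}{\sqrt{1-\lvert\g_n\rvert^2}}\,\|\p_n^{\#}\|_{\mathcal{W}},
\]
using \(\|z\p_n\|_{\mathcal W}=\|\p_n^{\#}\|_{\mathcal W}\). Summability makes the product of these factors converge, so \(\sup_n\|\p_n^{\#}\|_{\mathcal{W}}<\infty\).
\item The increments \(\p^{\#}_{n+1}-\p^{\#}_n\) have norms bounded by \(C(\lvert\g_n\rvert+1-\sqrt{1-\lvert\g_n\rvert^2})\), which is summable. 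Hence \(\p_n^{\#}\) is Cauchy and converges in \(\mathcal{W}_+\) to some \(D^{-1}\in\mathcal{W}_+\).
\item By the Zeros Theorem, each \(\p_n^{\#}\) has no zeros in \(\overline{\DD}\). The limit is nonvanishing on \(\overline{\DD}\) because \(\lvert\p_n^{\#}(0)\rvert=\prod_{k<n}(1-\lvert\g_k\rvert^2)^{-1/2}\) stays bounded, and a uniform limit argument applies.
\item For a measure with summable coefficients, Bernstein--Szeg\H{o} approximation shows that \(\mu\) is the weak limit of \(\lvert\p_n^{\#}(e^{i\th})\rvert^{-2}\frac{\rmd\th}{2\pi}\). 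This gives \(\rmd\mu=\lvert D\rvert^2\frac{\rmd\th}{2\pi}\), and Wiener's lemma puts \(D\) in \(\mathcal{W}\).
\item Consequently \(w=\lvert D\rvert^2\in\mathcal W\) is continuous and strictly positive, which gives (i) and (ii).
\end{enumerate}

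\textbf{(i) and (ii) imply summable coefficients.} Assume \(\mu=w\frac{\rmd\th}{2\pi}\) with \(w\in\mathcal W\) and \(w>0\).
\begin{enumerate}[label={\upshape\arabic*.}]
\item By Wiener's lemma, \(\log w\in\mathcal{W}\), because the logarithm is analytic on the spectrum of \(w\).
\item Split \(\log w=h+\bar h\) with \(h\in\mathcal{W}_+\), and set \(D=e^{h}\). Then \(D,D^{-1}\in\mathcal W_+\) and \(w=\lvert D\rvert^2\).
\item Show that \(\p_n^{\#}\to D^{-1}\) in \(\mathcal{W}\), with a summable rate. This is done through the Toeplitz matrix \(T(w)\): the coefficient vector of \(\p_n^{\#}\) solves a finite section of \(T(w)\). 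Baxter's key lemma states that finite sections of a Toeplitz operator with invertible symbol admitting a Wiener–Hopf factorisation are uniformly invertible on the weighted space \(\ell^1\). With this, the finite-section solutions converge in \(\ell^1\) to the coefficients of \(D^{-1}\).
\item Read off the Verblunsky coefficients from the top coefficient of \(\p_{n+1}\), which equals \(-\overline{\g_n}\kappa_{n}\) times a normaliser. Equivalently, \(\g_n\) is expressed by the constant term of \(\p_{n+1}\). This term is bounded by the \(\ell^1\)-tail of the coefficients of \(D^{-1}\) plus the error from the finite section.
\item Summing over \(n\) gives a quantity controlled by \(\sum_k k\lvert\hat{(D^{-1})}_k\rvert\). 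That quantity need not be finite, so this is the weak point of the plan.
\end{enumerate}

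The main obstacle is therefore this converse direction. The fix I would try first is Baxter's original approach: use the weighted Wiener algebra only through uniform \(\ell^1\)-invertibility of the finite Toeplitz sections \(T_n(w)\). Then write \(\g_n\) as a bilinear expression in \(T_n(w)^{-1}\) and the moment vector \((c_1,\dots,c_{n+1})\), and bound \(\sum_n\lvert\g_n\rvert\) by \(\sup_n\|T_n(w)^{-1}\|_{\ell^1\to\ell^1}\cdot\sum_k\lvert c_k\rvert\).

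The uniform invertibility itself would be proved by the Wiener–Hopf factorisation \(T_n(w)\approx T_n(\bar D)T_n(D)\). The error is a product of Hankel-type operators whose \(\ell^1\)-norm tends to zero, by the summability of the Fourier coefficients of \(D\) and \(D^{-1}\). A Neumann-series argument then handles large \(n\). For small \(n\), invertibility follows from positive definiteness of \(T_n(w)\), which holds since \(w>0\).
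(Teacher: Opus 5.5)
The paper does not prove this statement. It is quoted in the introduction as classical background (\cite{Bax61}, in the formulation of \cite{Sim05a}), so your proposal has to stand on its own.

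\textbf{First direction (summable coefficients imply i) and ii)).} This is the standard argument and it works after one repair, in step 3. Hurwitz's theorem together with the bound on $\lvert\varphi_n^{\#}(0)\rvert$ only gives nonvanishing of the limit in the open disc. A uniform limit of zero-free polynomials can vanish on $\partial\DD$: for example, $1-(1-1/n)z\to 1-z$. Instead, use $\lvert z\varphi_n\rvert=\lvert\varphi_n^{\#}\rvert$ on $\partial\DD$ in the recurrence. This gives $\lvert\varphi^{\#}_{n+1}\rvert\ge\sqrt{(1-\lvert\gamma_n\rvert)/(1+\lvert\gamma_n\rvert)}\,\lvert\varphi^{\#}_n\rvert$ there. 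Summability then yields a uniform positive lower bound on $\partial\DD$, which is exactly what step 4 needs.

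\textbf{The converse has a genuine gap}, precisely where you suspected, and your proposed fix does not close it.

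Write $\Phi_n^{\#}(z)=\sum_{m=0}^n b^{(n)}_m z^m$ for the monic reversed polynomials and set $\epsilon_n=\prod_{k<n}(1-\lvert\gamma_k\rvert^2)$. The bilinear formula you have in mind is $\gamma_n=\epsilon_n^{-1}\sum_{m=0}^n b^{(n)}_m c_{n+1-m}$, where $b^{(n)}$ is, up to normalisation, a column of $T_n(w)^{-1}$.

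The bound $\sup_n\lVert T_n(w)^{-1}\rVert_{\ell^1\to\ell^1}<\infty$ controls $\sum_m\lvert b^{(n)}_m\rvert$ for each fixed $n$. That only gives $\sup_n\lvert\gamma_n\rvert<\infty$, which is trivial. Summing over $n$ takes one entry from each of infinitely many different linear systems. After interchanging sums you would need $\sup_{k\ge 1}\sum_{n\ge k-1}\lvert b^{(n)}_{n+1-k}\rvert<\infty$. This is a bound along anti-diagonals across different finite sections, and no operator-norm bound supplies it. Worse, since the top coefficient of $\Phi_n^{\#}$ is $-\gamma_{n-1}$, the $k=1$ instance is $\sum_n\lvert b^{(n)}_n\rvert=1+\sum_n\lvert\gamma_n\rvert$. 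So the route is circular.

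Your justification of the uniform $\ell^1$-invertibility is also wrong, although the lemma itself is true. The matrix $T_n(w)-T_n(\bar D)T_n(D)$ has a corner diagonal entry equal to $\sum_{l\ge1}\lvert\hat D(l)\rvert^2$ for every $n$; which corner depends on the indexing convention. So the error does not tend to zero and no Neumann series is available.

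What is missing is an estimate that is summable in $n$. For example, you could bound $\lVert\varphi^{\#}_{n+1}-\varphi^{\#}_n\rVert_{\mathcal W}$, which dominates $\kappa_{n+1}\lvert\gamma_n\rvert$. Norm bounds on the finite sections, or their convergence, do not provide such an estimate. That step is the real content of Baxter's theorem, and you would need to take it from \cite{Bax61} or \cite{Sim05a}.
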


Similar to the conclusions we may draw from the Szeg\H{o}--Verblunsky theorem regarding square-summability, Baxter's theorem provides a full characterisation of summable sequences inside \(\DD\) or of continuous functions on \(\TT\) taking strictly positive values; this is more properly viewed as a result about elements of the \emph{Wiener algebra}.

The Wiener algebra (of the circle) is the Banach space \(\mathscr{W}(\TT)\) (or simply \(\mathscr{W}\)) of continuous functions on \(\TT\) whose Fourier coefficients are summable, with respect to the norm $\| w \| := \sum_{n=-\infty}^{\infty} |w_n|$, where $w(e^{i\theta}) = \sum_{n=-\infty}^{\infty} w_n e^{i n \theta}$. Observe that if
\[
    w(e^{i\th}) = \sum_{n=-\infty}^{\infty} c_n e^{in\th}
\]
in \(\mathscr{W}\) is positive, then the absolutely continuous measure given by \(\rmd\mu = w \, \frac{\rmd\th}{2\pi}\) has moments \((c_n)_{n=-\infty}^{\infty}\); in this light, summability of the Wiener function's Fourier coefficients indeed becomes condition i) of Baxter's theorem and summability of Verblunsky coefficients becomes a question of deciding positivity of a particular continuous function on \(\TT\).

Much of the theory of orthogonal polynomials on the unit circle has been generalised to the setting of matrix-valued polynomials. As far as the authors are aware, this began with the seminal paper of Delsarte, Genin and Kamp \cite{DGK78}: this paper begins with a matrix-valued measure on the unit circle and provides, among other things, a matricial analogue of the Szeg\H{o} recurrences, the Christoffel--Darboux formula, and a Szeg\H{o}--Verblunsky theorem involving determinants of their matricial objects. We further mention \cite{DHKT11} for a matrix-valued generalisation of the Szeg\H{o}--Verblunsky theorem, which is formulated in terms of a limit of integrals of matrix-valued orthogonal polynomials and which bears a striking resemblance to the classical Szeg\H{o}--Verblunsky theorem when the Verblunsky coefficients are mutually commuting and normal. 

Shortly following \cite{DGK78}, a matricial Baxter's theorem was established by Geronimo \cite{Ger81}, and more recently, \cite{DK16} established a bijective correspondence, in particular, between matrix-valued spectral densities that appear in Baxter's theorem with CMV matrices and certain solutions of a matricial Nehari problem. For a general and authoritative survey (with numerous new results) on matrix-valued orthogonal polynomials on the unit circle, the reader is strongly encouraged to see \cite{DPS08}; indeed, we shall often refer to this source throughout this paper.

In the quaternionic setting, a notion of regularity or holomorphy, due to Gentili and Struppa, which has proven very useful is given by the notion of slice hyperholomorphic functions, see \cite{GP06} and \cite{GP07}. An authoritative exposition coupled with some applications of slice hyperholomorphic functions and also related spectral theory (based on the $S$-spectrum) and operator theory in the quaternionic and general Clifford algebra settings can be found in \cite{CSS11}. For a presentation of the spectral theorem for (both bounded and unbounded) normal operators on a quaternionic Hilbert space, see \cite{CGK18}. One notable application of slice hyperholomorphic functions is the abstraction of Schur analysis to the quaternionic setting --- for a comprehensive discussion of this theory, see Alpay, Colombo and Sabadini \cite{ACS16a} (and references therein). In particular, \cite{ACS16a} contains a generalisation of the Schur algorithm for coefficient stripping to quaternionic Schur functions (this algorithm first appeared in the paper \cite{ACS16b} by the same authors). We shall use the \(q\)-positive measures of \cite{ACKS15} to generate sequences of slice hyperholomorphic polynomials, orthogonal with respect to particular inner products we shall define.

Certain families of monogenic Clifford algebra-valued polynomials have been studied previously, under various regimes, see, e.g., \cite{CGM01}, \cite{BCG04}, \cite{BCG06}, \cite{BG10}, \cite{BGLS12}, \cite{CFM14}, \cite{CFM17} and more recently \cite{BSZ25}. Monogenic Clifford algebra-valued polynomials can be thought of as a noncommutative generalisation of orthogonal polynomials in several real variables, which lie in the kernel of a certain Dirac operator.

The Wiener algebra, too, has already been studied in the quaternionic setting, in particular in \cite{ACKS16}. We shall introduce the relevant ideas and notation in this area in \Cref{sec:Baxter}.

Finally, we note that the main thrust of this paper is towards a quaternionic (so in particular multidimensional and noncommutative) generalisation of the theory of orthogonal polynomials on the unit circle. The reader may therefore also be interested in previous work of the present authors on a noncommutative generalisation of many of these ideas in the alternative sense of \emph{noncommutative function theory} (\cite{GK25a}) and on a multidimensional analogue in several complex variables (\cite{GK25b}).

\subsection{Main Conclusions}

The main conclusions of this paper are as follows.

\begin{enumerate}[label={(C\arabic*)}]
    \item Given a positive matrix-valued measure, we derive matricial analogues of the Schur recurrences (see \Cref{thm:MatrixSchurRec}) and Verblunsky's formula (see \Cref{thm:MatrixVerForm}) in terms of the matricial Verblunsky coefficients of \cite{DPS08}.

    \item Given a non-trivial \(q\)-positive measure \(\mu_i\) on \(\TT_i\) for some quaternionic unit \(i \in \SS\), we introduce sequences \((\q_n^L)_{n=0}^{\infty}\) and \((\q_n^R)_{n=0}^{\infty}\) of polynomials which are orthonormal with respect to \(\mu_i\) in two appropriate senses, and define their reverse polynomials \((\q_n^{L,\#})_{n=0}^{\infty}\) and \((\q_n^{R,\#})_{n=0}^{\infty}\). We derive for these polynomials Szeg\H{o} recurrences intertwining the four sequences \((\q_n^L)_{n=0}^{\infty}\), \((\q_n^R)_{n=0}^{\infty}\), \((\q_n^{L,\#})_{n=0}^{\infty}\), and \((\q_n^{R,\#})_{n=0}^{\infty}\) (see \Cref{thm:QuaternionicSzegoRecs} and \Cref{cor:PairedSzegoRecs}).

    \item We prove a quaternionic analogue of the Zeros Theorem for the quaternionic orthonormal polynomials associated to a \(q\)-positive measure: namely, the zeros of the orthonormal polynomials of a \(q\)-positive measure lie strictly inside the quaternionic unit ball (see \Cref{thm:QuaternionicZerosThm}).

    \item We prove a quaternionic analogue of the Christoffel--Darboux formula, restricted to the diagonal \(z = \z\) (see \Cref{thm:CDFDiagonal}).

    \item We propose and prove an analogue of the Szeg\H{o}--Verblunsky theorem in the quaternionic setting (see \Cref{thm:QuaternionicSV}) and discuss the consequent characterisation of convergence of sequences of quaternions inside the unit ball.

    \item Once the relevant generalisations of the Wiener algebra have been introduced, we obtain via the matrix setting a quaternionic Baxter's theorem (see \Cref{cor:QuaternionicBaxter}): the Verblunsky coefficients of a non-trivial \(q\)-positive measure are summable if and only if the measure is absolutely continuous, and its Radon--Nikodym derivative is the restriction of an element of the quaternionic Wiener algebra.
\end{enumerate}

\section{The Schur Recurrences and Verblunsky's Formula In the Matrix Setting}
\label{sec:SchurRecVerblunskyForm}

We begin by abstracting to the setting of positive matrix-valued measures two very classical results: the classical Schur recurrences for the coefficients of the Schur iterates of a measure, and Verblunsky's formula for the moments of a measure. The primary novelty in this setting compared to the classical results is that the polynomials \(r_n\) and \(\tilde{r}_n\) are replaced by \emph{noncommutative rational functions}, as in noncommutative function theory (see e.g. \cite{PV21}). 

First, let us quickly recall a well-known construction, as in e.g. \cite{DPS08}. We say that a \(d\times d\) matrix-valued function \(f\) on \(\DD\) is a \emph{matrix-valued Herglotz function} if \(\Re F(z) = \frac12 (F(z) + F(z)^*)\) is positive semidefinite for all \(z \in \DD\), and we say that \(f\) is a  \emph{matrix-valued Schur function} if \(f(z)^*f(z) \leq I_d\) for all \(z \in \DD\). If a matrix-valued measure \(M\) has Verblunsky coefficients \((\a_n)_{n=0}^{\infty}\) then we also define for \(n = 0, 1, 2, \ldots\) the positive definite \emph{defect matrices} \(\r_n^L := (I_d - \a_n^*\a_n)^{\frac12}\) and \(\r_n^R := (I_d - \a_n\a_n^*)^{\frac12}\). Finally, we say that \(M\) is \emph{non-trivial} if the \(M\)-norms given by
\[
    \norm{f}_L = \left(\mathrm{Tr}\int_0^{2\pi} f(e^{i\th}) \, \rmd M(\th) f(e^{i\th})^*\right)^{\frac12} \quad \text{and} \quad \norm{f}_R = \left(\mathrm{Tr}\int_0^{2\pi} f(e^{i\th})^* \, \rmd M(\th) f(e^{i\th})\right)^{\frac12},
\]
for a matrix-valued polynomial \(f\), are positive definite. As \cite{DPS08} notes, in this case, \(\r_n^L\) and \(\r_n^R\) are invertible matrices for all \(n\).

Given a \(d \times d\) matrix-valued measure \(M\), we associate to \(M\) a Herglotz function and a Schur function as in the scalar setting: first, the Herglotz function associated to \(M\) is the holomorphic function \(\DD \to \CC^{d\times d}\) given by
\[
    F(z) = \int_0^{2\pi}\frac{e^{i\th} + z}{e^{i\th} - z} \, \rmd M(\th).
\]
The Schur function associated to \(M\) is the matrix Cayley transform of \(F\), i.e. the matrix-valued function \(f\) such that
\[
    F(z) = (I_d + zf(z))(I_d - zf(z))^{-1}.
\]
As discussed in \cite{DPS08}, these associations are bijective.

The matricial Verblunsky coefficients of \(M\) may then be obtained via the Schur algorithm applied to \(f\), as in \cite[Theorem 3.19]{DPS08}. Namely, defining \(f_0 := f\), for \(n = 0, 1, 2, \ldots\) the Verblunsky coefficients arise from the relations \(\a_n := f_n(0)\) and
\[
    f_{n+1}(z) = z^{-1} (\r_n^R)^{-1} (f_n(z) - \a_n)(I_d - \a_n^* f_n(z))^{-1} (\r_n^L)^{-1}.
\]
\begin{theorem}
\label{thm:MatrixSchurRec}
    Let \(M\) be a positive, non-trivial $d \times d$ matrix-valued measure on \(\TT\) with Verblunsky coefficients \((\a_n)_{n=0}^{\infty}\) and for \(n \in \NN\), let \(\r_n^L := (I_d - \a_n^* \a_n)^{\frac12}\) and \(\r_n^R := (I_d - \a_n \a_n^*)^{\frac12}\). Let \(f\) be the matrix-valued Schur function associated to \(M\), with coefficients \((s_k(f))_{k\in\NN}\). For \(k \in \NN\), these coefficients are given by
    \[
        s_k(f) = \r_0^R \ldots \r_{k-1}^R \a_k \r_{k-1}^L \ldots \r_0^L + R_k(\a_0, \ldots, \a_{k-1}, \a_0^*, \ldots, \a_{k-1}^*)
    \]
    where for each \(k\), \(R_k\) is a noncommutative rational function (see \cite{PV21}).
\end{theorem}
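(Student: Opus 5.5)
We invert the Schur algorithm and argue by induction on \(k\), keeping track only of which Taylor coefficients can contribute to the leading term. Solving the recursion
\[
    f_{n+1}(z) = z^{-1} (\r_n^R)^{-1} (f_n(z) - \a_n)(I_d - \a_n^* f_n(z))^{-1} (\r_n^L)^{-1}
\]
for \(f_n\) gives \(z\,\r_n^R f_{n+1}(z)\r_n^L = (f_n(z)-\a_n)(I_d-\a_n^* f_n(z))^{-1}\). Write \(g(z) := z\,\r_n^R f_{n+1}(z)\r_n^L\). Then \(g(I_d - \a_n^* f_n) = f_n - \a_n\), which rearranges to \((I_d + g\a_n^*) f_n = g + \a_n\), and hence
\[
    f_n(z) = (I_d + g(z)\a_n^*)^{-1}(g(z)+\a_n).
\]
The inverse exists near \(0\) since \(g(0)=0\); equivalently one can use the standard inverse linear fractional form from \cite{DPS08}.

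Expanding \((I_d + g\a_n^*)^{-1} = \sum_{j\ge 0}(-g\a_n^*)^j\) as a power series, and noting \(g = O(z)\), we get a formula for the coefficients. For \(m\ge 1\),
\[
    s_m(f_n) = \r_n^R\, s_{m-1}(f_{n+1})\, \r_n^L + P_{n,m},
\]
where \(P_{n,m}\) is a noncommutative polynomial in \(\a_n,\a_n^*,\r_n^{R},\r_n^L\) and in the coefficients \(s_0(f_{n+1}),\dots,s_{m-2}(f_{n+1})\). This holds because any product of two or more factors of \(g\) contributes to \(z^m\) only through coefficients of \(f_{n+1}\) of index at most \(m-2\). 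Also \(s_0(f_n)=\a_n\).

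The induction hypothesis, applied to the shifted measure with Verblunsky coefficients \((\a_{n+j})_{j\ge0}\), will be the statement
\[
    s_m(f_n) = \r_n^R\cdots\r_{n+m-1}^R\,\a_{n+m}\,\r_{n+m-1}^L\cdots\r_n^L + R_{n,m}(\a_n,\dots,\a_{n+m-1},\text{adjoints}).
\]
We prove it by induction on \(m\), simultaneously for all \(n\). The case \(m=0\) is \(s_0(f_n)=\a_n\). For the inductive step, substitute the hypothesis for \(s_{m-1}(f_{n+1})\) into the recursion above. Conjugating its leading term by \(\r_n^R\) and \(\r_n^L\) produces exactly the claimed leading term. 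The remainder consists of \(\r_n^R R_{n+1,m-1}\r_n^L\) together with \(P_{n,m}\). The latter depends only on \(s_j(f_{n+1})\) for \(j\le m-2\), which by induction involve only \(\a_{n+1},\dots,\a_{n+m-1}\) and their adjoints. Taking \(n=0\), \(f_0=f\), yields the theorem.

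The main obstacle is justifying that the remainder is a noncommutative rational function of \(\a_0,\dots,\a_{k-1}\) and their adjoints alone. The concern is the defect matrices \(\r_j^{L,R}=(I_d-\a_j^*\a_j)^{1/2}\), which are square roots rather than rational expressions. I would handle this with the intertwining identities \(\a_j\r_j^L=\r_j^R\a_j\) and \(\r_j^{R}\r_j^R = I_d-\a_j\a_j^*\). The goal is to show that in the remainder the \(\r\)'s appear only in paired combinations that reduce to \((\r_j)^2 = I_d - \a_j^*\a_j\) or \(I_d-\a_j\a_j^*\), or their inverses. If this pairing fails in general, I would fall back on interpreting ``noncommutative rational function'' as permitting the \(\r_j\) and their inverses as additional arguments, these being themselves determined by the \(\a_j\). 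Either way, the leading-term computation, which is the substantive content, goes through unchanged.
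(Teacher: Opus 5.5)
Your strategy is sound: invert one Schur step, expand $(I_d+g\a_n^*)^{-1}$ as a Neumann series, and induct on $m$ simultaneously for all $n$. However, the one-step coefficient recursion you extract is not correctly derived, and that recursion is exactly the leading-term computation. In
\[
f_n=\sum_{j\ge 0}(-g\a_n^*)^j\,(g+\a_n)
\]
the $j=1$ summand contains $-g\a_n^*\a_n$, which has only \emph{one} factor of $g$. So the part of $f_n-\a_n$ that is linear in $g$ is $g(I_d-\a_n^*\a_n)=g(\r_n^L)^2$, not $g$. The $z^m$-coefficient therefore contains $-\r_n^R s_{m-1}(f_{n+1})\r_n^L\a_n^*\a_n$. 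This term involves $s_{m-1}(f_{n+1})$, hence $\a_{n+m}$, so it cannot be absorbed into $P_{n,m}$. With your $g=z\r_n^R f_{n+1}\r_n^L$ the correct conclusion is $s_m(f_n)=\r_n^R s_{m-1}(f_{n+1})(\r_n^L)^3+P_{n,m}$. Iterating that would put cubes of the $\r_j^L$ into the leading term.

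The underlying issue is that the Schur step you started from, the display before the theorem with $(\r_n^L)^{-1}$ on the right, is misprinted. The correct step is (3.58) of \cite{DPS08}, which is what the paper's proof actually uses and the version that preserves contractivity; it ends with $\r_n^L$. Then $g=z\r_n^R f_{n+1}(\r_n^L)^{-1}$, and the linear part $g(\r_n^L)^2$ yields exactly $\r_n^R s_{m-1}(f_{n+1})\r_n^L$. This is the step in the paper where the $\ell=0$ term of the convolution in \eqref{eq:FunctionRelationCoeffs} is merged with the first term to produce the factor $I-\a_n^*\a_n$. So the recursion you state is true, but your derivation reaches it only because the misprint and the dropped term cancel.

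Once repaired, your route is a tidy variant of the paper's. You express $s_m(f_n)$ purely through $s_0(f_{n+1}),\dots,s_{m-1}(f_{n+1})$ and track the window $\a_n,\dots,\a_{n+m-1}$ in the induction. The paper's \eqref{eq:CoeffsRelation} instead keeps the lower coefficients $s_\ell(f_n)$ of the same iterate.

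On rationality, the pairing hope does not work. Using $\a_0^*\r_0^R=\r_0^L\a_0^*$, one already gets $R_2=-\r_0^R\a_1\a_0^*\a_1\r_0^L$. For $\a_0=\mathrm{diag}(a,b)$, its off-diagonal entries carry the factor $\sqrt{(1-|a|^2)(1-|b|^2)}$, which is not rational in the entries. So your fallback of admitting the $\r_j^{\pm1}$ as additional arguments is necessary, and it is also what the paper tacitly does.
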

\begin{proof}
    Begin by recalling (3.58) of \cite{DPS08}:
    \[
        f_{n+1}(z) = z^{-1} \big(\r_n^R\big)^{-1}[f_n(z) - \a_n][I_d - \a_n^* f_n(z)]^{-1} \r_n^L,
    \]
    and rearrange this to see that
    \[
        f_n(z) - \a_n = z \r_n^R f_{n+1}(z)\big(\r_n^L\big)^{-1} - z \r_n^R f_{n+1}(z)\big(\r_n^L\big)^{-1} \a_n^* f_n(z).
    \]
    In terms of the coefficients of the Schur iterates, since \(f_n(0) = \a_n\), this becomes
    \begin{equation}
    \label{eq:FunctionRelationCoeffs}
        \sum_{k=1}^{\infty} s_k(f_n) z^k = \sum_{k=0}^{\infty} \r_n^R s_k(f_{n+1}) \big(\r_n^L\big)^{-1} z^{k+1} - \sum_{k=0}^{\infty} \left( \sum_{\ell = 0}^{k} \r_n^R s_{k - \ell}(f_{n+1})\big(\r_n^L\big)^{-1} \a_n^* s_{\ell}(f_n)\right) z^{k+1}.
    \end{equation}
    
    Notice first that the constant coefficient of both sides of \eqref{eq:FunctionRelationCoeffs} is zero. On the other hand, when \(k \geq 1\), by comparing coefficients of \(z^k\) in \eqref{eq:FunctionRelationCoeffs} we obtain the equality
    \begin{align*}
        s_k(f_n) & = \r_n^R s_{k-1}(f_{n+1})\big(\r_n^L\big)^{-1} - \sum_{\ell = 0}^{k-1} \r_n^R s_{k - \ell - 1}(f_{n+1}) \big(\r_n^L\big)^{-1} \a_n^* s_{\ell}(f_n) \\
        & = \r_n^R s_{k-1}(f_{n+1}) \big(\r_n^L \big)^{-1}[I - \a_n^* \a_n] - \sum_{\ell=1}^{k-1} \r_n^R s_{k - \ell - 1}(f_{n+1})\big(\r_n^L\big)^{-1} \a_n^* s_{\ell}(f_n);
    \end{align*}
    since \(\r_n^L = (I_d - \a_n^* \a_n)^{\frac12}\), we have for \(k \geq 1\) and \(n \in \NN\) that
    \begin{equation}
    \label{eq:CoeffsRelation}
        s_k(f_n) = \r_n^R s_{k-1}(f_{n+1}) \r_n^L - \sum_{\ell=1}^{k-1} \r_n^R s_{k - \ell - 1}(f_{n+1})\big(\r_n^L\big)^{-1} \a_n^* s_{\ell}(f_n).
    \end{equation}
    Compare this to (1.3.47) of \cite{Sim05a} in the classical setting, to which the above precisely reduces in the appropriate special case.

    Recursively apply \eqref{eq:CoeffsRelation} to the lower-degree terms \(s_{k - \ell - 1}(f_{n+1})\) and \(s_{\ell}(f_n)\), \(\ell = 1, \ldots, k - 1\). This results in the observation that for \(n \in \NN\) and \(k \geq 1\),
    \[
        R_{n,k} := - \sum_{\ell=1}^{k-1} \r_n^R s_{k - \ell - 1}(f_{n+1})\big(\r_n^L\big)^{-1} \a_n^* s_{\ell}(f_n)
    \]
    is a noncommutative rational function in the matrix variables \(\a_0, \ldots, \a_{k-1}, \a_0^*, \cdots, \a_{k-1}^*\), and additionally of \(\a_n, \a_n^*\) whenever \(n \geq k\). Notice in particular that the sum is empty when \(k = 1\), so that \(R_{n,1} \equiv 0\) for all \(n \in \NN\).

    It remains to use \eqref{eq:CoeffsRelation} recursively to see that, for \(k \geq 1\), we have
    \begin{align*}
        s_k(f) & = \r_0^R s_{k-1}(f_{1}) \r_0^L + R_{0,k}(\a_0, \ldots, \a_{k-1}, \a_0^*, \ldots, \a_{k-1}^*) \\
        & = \r_0^R \bigg( \r_1^R s_{k-2}(f_2) \r_1^R + R_{1,k-1} \bigg) \r_0^L + R_{0,k} \\
        & = \r_0^R \ldots \r_{k-1}^R s_0(f_k) \r_{k-1}^L \ldots \r_0^L + R_{0,k} + \sum_{\ell=0}^{k-2} \r_0^R \ldots \r_{\ell}^R R_{\ell + 1, k - \ell - 1} \r_{\ell}^L \ldots \r_0^L,
    \end{align*}
    allowing us to make some final observations. Firstly, since \(R_{n,1} \equiv 0\) for all \(n\), the \(\ell = k-2\) term of the summation is in fact zero. Moreover, notice that \(R_{0,k} + \sum_{\ell=0}^{k-3} \r_0^R \ldots \r_{\ell}^R R_{\ell + 1, k - \ell - 1} \r_{\ell}^L \ldots \r_0^L\) is a noncommutative rational expression involving noncommutative rational functions of the Verblunsky coefficients, and the highest-indexed Verblunsky coefficients appearing therein appear in \(R_{0,k}\). It follows that
    \[
        R_k := R_{0,k} + \sum_{\ell=0}^{k-2} \r_0^R \ldots \r_{\ell}^R R_{\ell + 1, k - \ell - 1} \r_{\ell}^L \ldots \r_0^L
    \]
    is a noncommutative rational function of the variables \(\a_0, \ldots, \a_{k-1}, \a_0^*, \ldots, \a_{k-1}^*\), and we are done.
\end{proof}

In the classical setting, one may use the Schur recurrences to prove Verblunsky's formula (see \cite{Sim05a}, Section 1.7, item 7 for a discussion). Our next result is a corresponding derivation in the matrix setting.

\begin{theorem}
\label{thm:MatrixVerForm}
    Let \(M\) be a positive $d \times d$ matrix-valued measure on \(\TT\) with moments \((C_n)_{n=-\infty}^{\infty}\) and Verblunsky coefficients \((\a_n)_{n=0}^{\infty}\). For \(n \in \NN\), let \(\r_n^L := (I_d - \a_n^* \a_n)^{\frac12}\) and \(\r_n^R := (I_d - \a_n \a_n^*)^{\frac12}\). Then for \(n \geq 1\), we have
    \[
         C_{n} = \r_0^R \cdots \r_{n-2}^R \a_{n-1} \r_{n-2}^L \cdots \r_0^L + \tilde{R}_{n}(\a_0, \ldots, \a_{n-2}, \a_0^*, \ldots, \a_{n-2}^*),
    \]
    where for each \(n \geq 1\), \(\tilde{R}_n\) is a noncommutative rational function.
\end{theorem}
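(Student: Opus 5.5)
The plan is to relate the moments $C_n$ to the Taylor coefficients $s_k(f)$ of the Schur function and then invoke \Cref{thm:MatrixSchurRec}. First, expand the Herglotz function in powers of $z$. Since
\[
\frac{e^{i\th}+z}{e^{i\th}-z} = 1 + 2\sum_{n\ge1} e^{-in\th} z^n,
\]
we get $F(z) = C_0 + 2\sum_{n\geq 1} C_n z^n$. Here $C_0 = I_d$, normalising $M$ as in \cite{DPS08} so that $M(\TT)=I_d$.

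Next, rearrange the Cayley relation $F(z)(I_d - zf(z)) = I_d + zf(z)$ into
\[
F(z) - I_d = zf(z) + zF(z)f(z) = z\,(I_d + F(z))\,f(z).
\]
Comparing coefficients of $z^n$ for $n\ge1$ gives
\[
2C_n = 2 s_{n-1}(f) + 2\sum_{j=1}^{n-1} C_j\, s_{n-1-j}(f),
\]
that is,
\[
C_n = s_{n-1}(f) + \sum_{j=1}^{n-1} C_j\, s_{n-1-j}(f).
\]
This is a triangular recursion. By strong induction on $n$, every $C_j$ with $j\le n-1$ is a noncommutative polynomial in $s_0(f),\dots,s_{n-2}(f)$. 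Each such $s_k(f)$ with $k\le n-2$ is, by \Cref{thm:MatrixSchurRec}, a noncommutative rational function of $\a_0,\dots,\a_k,\a_0^*,\dots,\a_k^*$, so of indices at most $n-2$. Note that $s_0(f)=\a_0$ directly.

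Now apply \Cref{thm:MatrixSchurRec} to the leading term $s_{n-1}(f)$:
\[
s_{n-1}(f) = \r_0^R\cdots\r_{n-2}^R\,\a_{n-1}\,\r_{n-2}^L\cdots\r_0^L + R_{n-1}(\a_0,\dots,\a_{n-2},\a_0^*,\dots,\a_{n-2}^*).
\]
Then set
\[
\tilde R_n := R_{n-1} + \sum_{j=1}^{n-1} C_j\, s_{n-1-j}(f),
\]
with each $C_j$ and $s_{n-1-j}(f)$ already expressed in terms of $\a_0,\dots,\a_{n-2}$ and their adjoints. Noncommutative rational functions are closed under sums and products, so $\tilde R_n$ is again one. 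For $n=1$ the sum is empty and $R_0\equiv 0$, so $C_1=\a_0$, consistent with the claimed formula (the empty products of $\r$'s equal $I_d$).

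The main obstacle is the bookkeeping of which variables appear. We must check that no $\a_{n-1}$ enters $\tilde R_n$. This holds because the terms $C_j s_{n-1-j}(f)$ only involve coefficients $s_m(f)$ with $m\le n-2$. A related subtlety is that the $\r$'s are square roots rather than rational expressions in the $\a$'s. Following the convention of \Cref{thm:MatrixSchurRec}, we treat $\r_k^{L},\r_k^R$ and their inverses as admissible entries of the rational expressions, and the argument is then purely formal. A second point is that \Cref{thm:MatrixSchurRec} assumed non-triviality. For a trivial $M$ the sequence of Verblunsky coefficients terminates, and the same computation applies for all $n$ up to the index where it stops, so we would either restrict to that range or assume non-triviality as in the previous theorem.
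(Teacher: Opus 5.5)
Your proof is correct and follows the same strategy as the paper: use the Cayley relation to write $C_n - s_{n-1}(f)$ as a noncommutative polynomial in $s_0(f),\dots,s_{n-2}(f)$, then substitute \Cref{thm:MatrixSchurRec}. The only difference is that the paper expands $(I_d - zf(z))^{-1}$ as a geometric series to get the closed form $C_n = s_{n-1}(f) + \sum_{k=2}^{n} s_{n-k}(f^k)$, whereas you clear the denominator and use the triangular recursion $C_n = s_{n-1}(f) + \sum_{j=1}^{n-1} C_j\, s_{n-1-j}(f)$, which is purely formal and does not need the bound $\norm{zf(z)}<1$; your caveats about the square roots $\r_k^L, \r_k^R$ and about non-triviality apply equally to the paper's own argument.
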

\begin{proof}
    Let \(f\) be the $d\times d$ matrix-valued Schur function associated to \(M\), and let the corresponding Herglotz function be \(F\). Recall that \(F\) is given by
    \[
        F(z) = I_d + 2\sum_{n=1}^{\infty} C_n z^n,
    \]
    and moreover, that \(f\) and \(F\) are related via
    \[
        F(z) = \big( I_d + zf(z) \big) \big( I_d - zf(z) \big)^{-1}.
    \]
    Now, \(f\) is a Schur function, so for \(z \in \DD\), we have \(\norm{zf(z)} < 1\); it follows that \(\big(I_d - zf(z)\big)^{-1}\) is given by a geometric series and thus \(F\) is given by
    \[
        F(z) = \big( I_d + zf(z) \big) \sum_{n=0}^{\infty} (zf(z))^n = \sum_{n=0}^{\infty} (zf(z))^n + \sum_{n=0}^{\infty} (zf(z))^{n+1} = I_d + 2\sum_{n=1}^{\infty} (zf(z))^n.
    \]

    Equating our two expressions for \(F(z)\), we have that
    \[
        I_d + 2\sum_{n=1}^{\infty} C_n z^n = I_d + 2\sum_{n=1}^{\infty} (zf(z))^n,
    \]
    and it follows that
    \[
        \sum_{n=1}^{\infty} C_n z^n = \sum_{n=1}^{\infty} (zf(z))^n.
    \]
    The result will follow from comparing the coefficients of \(z^{n}\) for given \(n \geq 1\): on the left hand side, this is simply \(C_n\), so it remains to compute this coefficient in the expression \(\sum_{n=1}^{\infty} (zf(z))^n\), which we may do as follows. First, notice that for \(k > n\) we have \(s_n\big((zf(z))^k \big) = 0\), since \((zf(z))^k\) is a power series with lowest-order term \(z^k\), so we are in fact interested only in
    \[
        C_n = s_n\left(\sum_{k=1}^{\infty} (zf(z))^k \right) = s_n\left(\sum_{k=1}^{n} (zf(z))^k \right).
    \]
    Next, observe that
    \[
        s_n\big((zf(z))^k\big) = s_{n-k}(f^k),
    \]
    so that
    \[
        C_n = s_{n-1}(f) + \sum_{k=2}^n s_{n-k}(f^k).
    \]
    We may expand the latter sum as
    \[
        C_n = s_{n-1}(f) + \sum_{k=2}^n \sum_{\substack{i_1, \ldots, i_k \\ i_1 + \ldots + i_k = n-k}} s_{i_1}(f) \ldots s_{i_k}(f).
    \]

    To complete the calculation, notice that the second term is a noncommutative polynomial in lower-degree coefficients of \(f\), which by \Cref{thm:MatrixSchurRec} are noncommutative rational functions in lower-degree Verblunsky coefficients, while --- also by \Cref{thm:MatrixSchurRec} --- \(s_{n-1}(f)\) is given by
    \[
        s_{n-1}(f) = \r_0^R \cdots \r_{n-2}^R \a_{n-1} \r_{n-2}^L \cdots \r_0^L + R_{n-1}(\a_0, \ldots, \a_{n-2}, \a_0^*, \ldots, \a_{n-2}^*)
    \]
    where \(R_{n-1}\) is again a noncommutative rational function. Since noncommutative rational functions are closed under sums and products, we have that
    \begin{equation}
	\label{eq:VerForm}
        C_{n} = \r_0^R \cdots \r_{n-2}^R \a_{n-1} \r_{n-2}^L \cdots \r_0^L + \tilde{R}_{n}(\a_0, \ldots, \a_{n-2}, \a_0^*, \ldots, \a_{n-2}^*)
    \end{equation}
    where
    \begin{align*}
        \tilde{R}_n(\a_0, \ldots, \a_{n-2}, \a_0^*, \ldots, \a_{n-2}^*) & := R_{n-1}(\a_0, \ldots, \a_{n-2}, \a_0^*, \ldots, \a_{n-2}^*) \\ & \quad \quad \quad + \sum_{k=2}^n \sum_{\substack{i_1, \ldots, i_k \\ i_1 + \ldots + i_k = n-k}} s_{i_1}(f) \cdots s_{i_k}(f)
    \end{align*}
    is some noncommutative rational function of its arguments.
\end{proof}

\section{Quaternionic Preliminaries}

We shall first recall some notation and established ideas in quaternionic function theory. We liberally refer to Section 2 of \cite{ACKS15} in this section.

\subsection{Quaternions and Matrices of Quaternions} We denote by \(\HH\) the space of quaternions, whose elements are of the form \(p = a + bi + cj + dk\), where \(a,b,c,d \in \RR\) and \(i^2 = j^2 = k^2 = -1\); we may split such a \(p\) into its real part \(\Re(p) = a\) and imaginary part \(\Im(p) = bi + cj + dk\), and we write its conjugate as \(\overline{p} = a - bi - cj - dk\). The modulus or norm of a quaternion is given by \(\lvert p \rvert^2 = a^2 + b^2 + c^d + d^2\).

We say that \(p\) is \emph{purely imaginary} if \(a = 0\) and observe that if \(p\) is both purely imaginary and has norm 1, then \(p^2 = -1\), so that we have a full unit 2-sphere
\[
    \SS = \{bi + cj + dk : b,c,d \in \RR \text{ and } b^2 + c^2 + d^2 = 1\}
\]
of purely imaginary quaternions that square to \(-1\). Note that here \(\SS\) is distinct from the boundary of the unit ball in \(\HH\): writing
\[
    \BB = \{a + bi + cj + dk : a,b,c,d \in \RR \text{ and } a^2 + b^2 + c^2 + d^2 \leq 1\},
\]
we write \(\partial\BB\) for its boundary, i.e.
\[
    \partial\BB = \{a + bi + cj + dk : a,b,c,d \in \RR \text{ and } a^2 + b^2 + c^2 + d^2 = 1\}.
\]
The sphere \(\SS\) then consists of all purely imaginary elements of \(\partial\BB\).

It follows from the fact that any element of \(\SS\) squares to \(-1\) that the choice of basis \(i,j,k\) of \(\HH\) is arbitrary: if one chooses \(i' = bi + cj + dk, j' = yi + zj + wk \in \SS\), orthogonal in the sense that \(by + cz + dw = 0\), and sets \(k' =  i'j'\), then \(\{1, i', j', k'\}\) forms another basis for \(\HH\) with \((i')^2 = (j')^2 = (k')^2 = -1\).

Given an element \(i\) of \(\SS\), we write \(\CC_i\) for the complex plane generated by \(1\) and \(i\), 
\[
    \CC_i = \RR + \RR i = \{a + bi : a,b \in \RR\},
\]
and within \(\CC_i\) we have a copy of the unit circle \(\TT_i = \{a + bi : a,b \in \RR \text{ and } a^2 + b^2 = 1\}\). We shall often restrict a function on some subset of \(\HH\), \(f : X \subseteq \HH \to \HH\), to its domain inside \(\CC_i\), and we denote this restriction by \(f_i : X \cap \CC_i \to \HH\). Moreover, observe that if \(i,j \in \SS\) are orthogonal, then \(\{1, i, j, ij\}\) forms a basis for \(\HH\) as a vector space over \(\RR\). Thus any quaternion \(p\) may be written
\[
    p := a + bi + cj + dij = (a + bi) + (c + di)j = z + wj, \quad z, w \in \CC_i.
\]
This provides us with a natural embedding of the quaternions into \(\CC_i^{2\times 2}\) for any choice of orthogonal \(i,j \in \SS\): writing \(p = p^{(1)} + p^{(2)}j\) with \(p^{(1)}, p^{(2)} \in \CC_i\), we define
\[
    \x_i(p) := \begin{bmatrix} p^{(1)} & p^{(2)} \\[6pt] -\overline{p^{(2)}} & \overline{p^{(1)}}\end{bmatrix} \in \CC_i^{2\times 2},
\]
and it is straightforward to check that this definition of \(\x_i\) gives a unital, isometric \(*\)-homomorphism. 

We shall also have need of an extension of this idea to \emph{matrices} of quaternions: if \(A = [a_{k\ell}]_{k,\ell = 1}^{n} \in \HH^{n\times n}\), then choosing orthogonal \(i,j \in \SS\) write \(a_{k\ell} = a^{(1)}_{k\ell} + a^{(2)}_{k\ell}j\) where \(a^{(1)}_{k\ell}, a^{(2)}_{k\ell} \in \CC_i\) for all \(k, \ell = 1, \ldots, n\) and accordingly decompose \(A\) as \(A = A^{(1)} + A^{(2)}j\) where \(A^{(1)} := [a^{(1)}_{k\ell}]_{k,\ell = 1}^n\) and \(A^{(2)} := [a^{(2)}_{k\ell}]_{k,\ell=1}^n\) lie in \(\CC_i^{n\times n}\). The extension of \(\x_i\) to matrices is given by
\[
    \x_i(A) = \begin{bmatrix} A^{(1)} & A^{(2)} \\[6pt] -\overline{A^{(2)}} & \overline{A^{(1)}} \end{bmatrix} \in \CC_i^{2n \times 2n}.
\]
This idea goes at least as far back as the works of Brenner \cite{Bre51} and Lee \cite{Lee49} establishing the existence of right-eigenvalues for quaternionic matrices. 

\begin{definition}
    We say that a matrix \(A \in \HH^{n\times n}\) is \emph{positive semidefinite} if \(\xi^* A \xi \geq 0\) for all \(\xi \in \HH^n\). We say that \(A\) is \emph{positive definite} if \(\xi^* A \xi > 0\) for all nonzero \(\xi \in \HH^n\).
\end{definition}

It will also be useful for us to know the following fact, which was previously exploited in \cite{ACKS15}; we give a proof in the name of completeness.
\begin{lemma}
    \label{lem:UnitaryEquiv}
    Let \(i,j \in \SS\) be orthogonal. For each \(n\in\NN\), there exists some unitary \(U_n \in \CC_i^{2n \times 2n}\) with the property that, for any \(A = [a_{k\ell}]_{k,\ell=1}^n \in \HH^{n\times n}\), one has 
    \[
        \x_i(A) = U_n \begin{bmatrix} \x_i(a_{11}) & \cdots & \x_i(a_{1n}) \\ \vdots & \ddots & \vdots \\ \x_i(a_{n1}) & \cdots & \x_i(a_{nn}) \end{bmatrix} U_n^*.
    \]
\end{lemma}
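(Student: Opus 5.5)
The unitary $U_n$ will be a permutation matrix, the \emph{perfect shuffle}, so the proof reduces to bookkeeping of indices. Write $A = A^{(1)} + A^{(2)}j$ with $A^{(1)} = [a^{(1)}_{k\ell}]$ and $A^{(2)} = [a^{(2)}_{k\ell}]$. By definition, $\x_i(A)$ is the $2\times 2$ block matrix whose $(1,1)$ block is $A^{(1)}$, whose $(1,2)$ block is $A^{(2)}$, whose $(2,1)$ block is $-\overline{A^{(2)}}$ and whose $(2,2)$ block is $\overline{A^{(1)}}$. Call the block matrix on the right-hand side $B := [\x_i(a_{k\ell})]_{k,\ell=1}^n$. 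This is an $n\times n$ array of $2\times 2$ blocks, and its $(k,\ell)$ block has entries $a^{(1)}_{k\ell}, a^{(2)}_{k\ell}, -\overline{a^{(2)}_{k\ell}}, \overline{a^{(1)}_{k\ell}}$. The two matrices therefore contain exactly the same scalar entries; only the ordering of rows and columns differs.

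Concretely, index rows and columns of $\x_i(A)$ by pairs $(s,k)$, where $s\in\{1,2\}$ is the block index and $k\in\{1,\ldots,n\}$, in the order $(1,1),\ldots,(1,n),(2,1),\ldots,(2,n)$. Then the $((s,k),(t,\ell))$ entry of $\x_i(A)$ is the $(s,t)$ entry of $\x_i(a_{k\ell})$. This is checked in the four cases $s,t\in\{1,2\}$ directly from the definitions. The rows and columns of $B$ are indexed by the same pairs but in the order $(1,1),(2,1),(1,2),(2,2),\ldots,(1,n),(2,n)$, and the $((s,k),(t,\ell))$ entry of $B$ is the same $(s,t)$ entry of $\x_i(a_{k\ell})$.

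Define the permutation $\sigma$ on $\{1,\ldots,2n\}$ sending position $(s-1)n+k$ to position $2(k-1)+s$, and let $U_n$ be the associated permutation matrix, normalised so that $(U_n)_{p,\sigma(p)} = 1$ and all other entries are $0$. A permutation matrix has real entries and satisfies $U_nU_n^* = I$, so it is a unitary in $\CC_i^{2n\times 2n}$. It then remains to verify that
\[
(U_n B U_n^*)_{p,q} = B_{\sigma(p),\sigma(q)} = \x_i(A)_{p,q}
\]
for all $p,q$, which is precisely the index correspondence described above.

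The main (minor) obstacle is to get the direction of $\sigma$ versus $\sigma^{-1}$ right so that the identity reads $\x_i(A) = U_n B U_n^*$ rather than $U_n^* B U_n$. This is fixed by a single explicit check, for instance on $n=1$, where $U_1 = I$, or on $n=2$. It is also essential that $U_n$ depends only on $n$, not on $A$, as the statement requires. This holds because $\sigma$ was defined purely in terms of the index positions.
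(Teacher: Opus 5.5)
Your proposal is correct. Your permutation is exactly the paper's: the normalisation $(U_n)_{p,\sigma(p)}=1$ gives $u_{m,2m-1}=u_{n+m,2m}=1$ for $m=1,\ldots,n$, which is the matrix the paper writes down.

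The difference lies in how the identity is verified. The paper argues by induction on $n$. It factors $U_{n+1}=V_{n+1}(U_n\oplus I_2)$ for an explicit cyclic permutation $V_{n+1}$, then tracks through several large block displays how the rows and columns of the bordered matrix $[\x_i(a_{k\ell})]_{k,\ell=1}^{n+1}$ move. You instead check $(U_nBU_n^*)_{p,q}=B_{\sigma(p),\sigma(q)}=\x_i(A)_{p,q}$ in a single step, by labelling rows and columns with pairs $(s,k)\in\{1,2\}\times\{1,\ldots,n\}$. Your route is shorter and shows why the result holds: both matrices list the same scalar entries, indexed by the same pairs in two different orders. The paper's induction exhibits a recursive structure of $U_n$, but nothing later in the paper uses it, so it mainly adds bookkeeping.

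The $\sigma$ versus $\sigma^{-1}$ issue you flag is not a real obstacle for an existence statement: if $\x_i(A)=U^*BU$, simply take $U_n:=U^*$. In any case your general computation already fixes the direction. Note also that the sanity checks you propose at $n=1$ or $n=2$ could not detect such an error. The perfect shuffle is an involution for $n\le 2$; for $n=2$ it is the transposition of positions $2$ and $3$.
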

\begin{proof}
    The unitary equivalence of the two matrices is in fact given by a permutation of the rows and columns, i.e. the matrix \(U_n\) is a permutation matrix. We claim that \(U_n = [u_{k\ell}]_{k,\ell=1}^{2n}\) is given by
    \[
        u_{k\ell} = \begin{cases}1, \text{ if } (k, \ell) \in \{(m, 2m-1), (n+m, 2m) : m = 1, \ldots, n\}, \\ 0, \text{ otherwise}.\end{cases}
    \]
    For example, for \(n = 2,3\) these are
    \[
        U_2 = \begin{bmatrix} 1 & 0 & 0 & 0 \\ 0 & 0 & 1 & 0 \\ 0 & 1 & 0 & 0 \\ 0 & 0 & 0 & 1 \end{bmatrix}, \quad U_3 = \begin{bmatrix} 1 & 0 & 0 & 0 & 0 & 0 \\ 0 & 0 & 1 & 0 & 0 & 0 \\ 0 & 0 & 0 & 0 & 1 & 0 \\ 0 & 1 & 0 & 0 & 0 & 0 \\ 0 & 0 & 0 & 1 & 0 & 0 \\ 0 & 0 & 0 & 0 & 0 & 1 \end{bmatrix}.
    \]
    Notice first that for any \(n \in \NN\), we may obtain \(U_{n}\) from the \(2n \times 2n\) matrix \(U_{n-1} \oplus I_2\) by moving the \((2n-1)^{\text{th}}\) row to position \(n+1\), and then shifting all following rows \(n+1, n+2, \ldots, 2n-2\) down one; in other words, we apply the permutation \(2n-1 \to n+1 \to n+2 \to \ldots \to 2n-2 \to 2n-1\). If we define the permutation matrix \(V_n\) for \(n \in \NN\) via
    \[
        V_{n+1} := I_{n} \oplus \begin{bmatrix} 0 & 1 & 0 & \cdots & 0 \\ \vdots & \ddots & \ddots & \ddots & \vdots \\ 0 & \cdots & 0 & 1 & 0 \\ 1 & 0 & \cdots & \cdots & 0 \\ 0 & \cdots & \cdots & 0 & 1 \end{bmatrix} \in \CC_i^{2(n+1) \times 2(n+1)},
    \]
    then this permutation may be described for \(n \in \NN\) by \(U_{n+1} = V_{n+1} \cdot (U_n \oplus I_2)\).
    
    Now, to see the claim, suppose for the purpose of induction that for some \(n \in \NN\) we had that
    \[
        \x_i(A) = U_n [\x_i(a_{k\ell})]_{k,\ell=1}^n U_n^*
    \]
    for all \(A = [a_{k\ell}]_{k,\ell=1}^n \in \HH^{n\times n}\). Let \(A = [a_{k\ell}]_{k,\ell=1}^{n+1} \in \HH^{(n+1)\times(n+1)}\) and calculate for an arbitrary block matrix \(\begin{bmatrix} X & Y \\ Z & W \end{bmatrix}\) that for a appropriately-sized matrices \(\Gamma\) and \(I_k\),
    \[
        \begin{bmatrix} \Gamma & 0 \\ 0 & I_k \end{bmatrix} \begin{bmatrix} X & Y \\ Z & W \end{bmatrix} \begin{bmatrix} \Gamma & 0 \\ 0 & I_k\end{bmatrix}^* = \begin{bmatrix} \Gamma X \Gamma^* & \Gamma Y \\ Z \Gamma^* & W \end{bmatrix}.
    \]
    By splitting \([\x_i(a_{k,\ell})]_{k,\ell=1}^{n+1}\) into the appropriate blocks, then, we have that
    \begin{equation}
    \label{eq:UnitaryEquivLemma}
        \begin{split}
            U_{n+1} [\x_i(a_{k,\ell})]_{k,\ell=1}^{n+1} U_{n+1}^* & = V_{n+1} \begin{bmatrix} U_n & 0 \\ 0 & I_2 \end{bmatrix} [\x_i(a_{k,\ell})]_{k,\ell=1}^{n+1} \begin{bmatrix} U_n & 0 \\ 0 & I_2 \end{bmatrix}^* V_{n+1}^* \\
            & = V_{n+1} \begin{bmatrix} U_n [\x_i(a_{k,\ell})]_{k,\ell=1}^n U_n^* & U_n [\x_i(a_{k,n+1})]_{k=1}^n \\[6pt] [\x_i(a_{n+1,\ell})]_{\ell=1}^n U_n^* & \x_i(a_{n+1}) \end{bmatrix} V_{n+1}^*. 
        \end{split}
    \end{equation}
    By the inductive hypothesis, the \((1,1)\)-block of the centre matrix is simply \(\x_i(A')\). Expand the right-most column and bottom row to write the right-hand side as
    \[
        V_{n+1} \left[\begin{array}{c|c} \x_i(A') & U_n \begin{bmatrix}a_{1,n+1}^{(1)} & a_{1,n+1}^{(2)} \\[6pt] -\overline{a_{1,n+1}^{(2)}} & \overline{a_{1,n+1}^{(1)}} \\ \vdots & \vdots \\ a_{n,n+1}^{(1)} & a_{n,n+1}^{(2)} \\[6pt] -\overline{a_{n,n+1}^{(2)}} & \overline{a_{n,n+1}^{(1)}} \end{bmatrix} \\[50pt] \hline \\[-8pt] \begin{bmatrix}a_{n+1,1}^{(1)} & a_{n+1,1}^{(2)} & \cdots & a_{n+1,n}^{(1)} & a_{n+1,n}^{(2)} \\[6pt] -\overline{a_{n+1,1}^{(2)}} & \overline{a_{n+1,1}^{(1)}} & \cdots & -\overline{a_{n+1,n}^{(2)}} & \overline{a_{n+1,n}^{(1)}}\end{bmatrix} U_n^* & \begin{array}{cc} a_{n+1,n+1}^{(1)} & a_{n+1,n+1}^{(2)} \\[6pt] -\overline{a_{n+1,n+1}^{(2)}} & \overline{a_{n+1, n+1}^{(1)}}\end{array} \end{array}\right] V_{n+1}^*.
    \]
    Of course, multiplication by \(U_n\) on the left and by \(U_n^*\) on the right simply permutes the rows and columns, respectively, so applying these permutations we obtain the matrix
    \[
        V_{n+1} \left[\begin{array}{c|c} \x_i(A') & \begin{array}{cc} a_{1,n+1}^{(1)} & a_{1,n+1}^{(2)} \\[6pt] \vdots & \vdots \\ a_{n,n+1}^{(1)} & a_{n,n+1}^{(2)} \\[6pt] -\overline{a_{1,n+1}^{(2)}} & \overline{a_{1,n+1}^{(1)}} \\ \vdots & \vdots \\ -\overline{a_{n,n+1}^{(2)}} & \overline{a_{n,n+1}^{(1)}} \end{array} \\[10pt] \\[-8pt] \hline \\[-8pt] \begin{array}{cccccc} a_{n+1,1}^{(1)} & \cdots &  a_{n+1,n}^{(1)} & a_{n+1,1}^{(2)} & \cdots & a_{n+1,n}^{(2)} \\[6pt] -\overline{a_{n+1,1}^{(2)}} & \cdots & -\overline{a_{n+1,n}^{(2)}} & \overline{a_{n+1,1}^{(1)}} & \cdots & \overline{a_{n+1,n}^{(1)}}\end{array} & \begin{array}{cc} a_{n+1,n+1}^{(1)} & a_{n+1,n+1}^{(2)} \\[6pt] -\overline{a_{n+1,n+1}^{(2)}} & \overline{a_{n+1, n+1}^{(1)}}\end{array} \end{array}\right] V_{n+1}^*.
    \]

    Now, we have described above the permutation on rows arising from multiplying by the matrix \(V_{n+1}\) on the left; multiplying by \(V_{n+1}^*\) on the right performs the same permutation on the columns. Thus --- applying first the permutation on the rows --- our matrix becomes
    \[
        \begin{bmatrix} a_{11}^{(1)} & \cdots & a_{1n}^{(1)} & a_{11}^{(2)} & \cdots & a_{1n}^{(2)} & a_{1,n+1}^{(1)} & a_{1,n+1}^{(2)} \\[6pt] \vdots & \ddots & \vdots & \vdots & \ddots & \vdots & \vdots & \vdots \\[6pt] a_{n1}^{(1)} & \cdots & a_{nn}^{(1)} & a_{n1}^{(2)} & \cdots & a_{nn}^{(2)}  & a_{n,n+1}^{(1)} & a_{n,n+1}^{(2)} \\[6pt] a_{n+1,1}^{(1)} & \cdots & a_{n+1,n}^{(1)} & a_{n+1,1}^{(2)} & \cdots & a_{n+1,n}^{(2)}& a_{n+1,n+1}^{(1)} & a_{n+1,n+1}^{(2)} \\[6pt] -\overline{a_{11}^{(2)}} & \cdots & -\overline{a_{1n}^{(2)}} & \overline{a_{11}^{(1)}} & \cdots & \overline{a_{1n}^{(1)}} & -\overline{a_{1,n+1}^{(2)}} & \overline{a_{1,n+1}^{(1)}} \\[6pt] \vdots & \ddots & \vdots & \vdots & \ddots & \vdots & \vdots & \vdots \\[6pt] -\overline{a_{n1}^{(2)}} & \cdots & -\overline{a_{nn}^{(2)}} & \overline{a_{n1}^{(1)}} & \cdots & \overline{a_{nn}^{(1)}} & -\overline{a_{n,n+1}^{(2)}} & \overline{a_{n,n+1}^{(1)}} \\[6pt] -\overline{a_{n+1,1}^{(2)}} & \cdots & -\overline{a_{n+1,n}^{(2)}} & \overline{a_{n+1,1}^{(1)}} & \cdots & \overline{a_{n+1,n}^{(1)}} & -\overline{a_{n+1,n+1}^{(2)}} & \overline{a_{n+1,n+1}^{(1)}} \end{bmatrix} V_{n+1}^*,
    \]
    and finally, by permuting the columns in the same manner we arrive at
    \[
        \begin{bmatrix} a_{11}^{(1)} & \cdots & a_{1n}^{(1)} & a_{1,n+1}^{(1)} &  a_{11}^{(2)} & \cdots & a_{1n}^{(2)} & a_{1,n+1}^{(2)} \\[6pt] \vdots & \ddots & \vdots & \vdots & \vdots & \ddots & \vdots & \vdots \\[6pt] a_{n1}^{(1)} & \cdots & a_{nn}^{(1)} & a_{n,n+1}^{(1)} & a_{n1}^{(2)} & \cdots & a_{nn}^{(2)} & a_{n,n+1}^{(2)} \\[6pt] a_{n+1,1}^{(1)} & \cdots & a_{n+1,n}^{(1)} & a_{n+1,n+1}^{(1)} & a_{n+1,1}^{(2)} & \cdots & a_{n+1,n}^{(2)} & a_{n+1,n+1}^{(2)} \\[6pt] -\overline{a_{11}^{(2)}} & \cdots & -\overline{a_{1n}^{(2)}} & -\overline{a_{1,n+1}^{(2)}} & \overline{a_{11}^{(1)}} & \cdots & \overline{a_{1n}^{(1)}} & \overline{a_{1,n+1}^{(1)}} \\[6pt] \vdots & \ddots & \vdots & \vdots & \vdots & \ddots & \vdots & \vdots \\[6pt] -\overline{a_{n1}^{(2)}} & \cdots & -\overline{a_{nn}^{(2)}} & -\overline{a_{n,n+1}^{(2)}} & \overline{a_{n1}^{(1)}} & \cdots & \overline{a_{nn}^{(1)}} & \overline{a_{n,n+1}^{(1)}} \\[6pt] -\overline{a_{n+1,1}^{(2)}} & \cdots & -\overline{a_{n+1,n}^{(2)}} & -\overline{a_{n+1,n+1}^{(2)}} & \overline{a_{n+1,1}^{(1)}} & \cdots & \overline{a_{n+1,n}^{(1)}} & \overline{a_{n+1,n+1}^{(1)}} \end{bmatrix} = \x_i(A).
    \]
    Hence the right-hand side of \eqref{eq:UnitaryEquivLemma} may now be readily seen to be \(\x_i(A)\).

    It remains to note the base case: when \(n=1\), \(A = a_{11} \in \HH\) and furthermore \(U_1 = I_2\), so
    \[
        U_1 \x_i(a_{11}) U_1^* = \x_i(a_{11}) = \x_i(A).
    \]
    By induction, then, \(U_n [\x_i(a_{k\ell})]_{k,\ell=1}^n U_n^* = \x_i(A)\) for all \(n \in \NN\) and \(A = [a_{k\ell}]_{k,\ell=1}^n \in \HH^{n\times n}\).
\end{proof}

\begin{remark}
	This lemma is of particular interest to us because this extension of \(\x_i\) to matrices preserves positivity of the matrix, but does not preserve certain other important properties --- notably, if \(A\) is Toeplitz then \(\x_i(A)\) is in general not (block-)Toeplitz. Nevertheless, \Cref{lem:UnitaryEquiv} shows that \(\x_i(A)\) is \emph{unitarily equivalent to} an appropriate block-Toeplitz matrix.
\end{remark}

\subsection{Slice Hyperholomorphy} Slice hyperholomorphy of functions \(U \subseteq \HH \to \HH\) may be defined via the Cauchy-Riemann equations, analogous to the usual setting of complex analysis but with the added subtlety that an \(\HH\)-valued \(f\) will not in general commute with \(i\). To wit: 
\begin{definition}
	Let \(U \subseteq \HH\) be open and \(f : U \to \HH\) be real differentiable as a vector function (considering \(\HH\) as \(\RR^4\)). We say that \(f\) is \emph{left-slice hyperholomorphic} if, for all \(i \in \SS\),
	\[
	    \frac{\partial}{\partial x}f_i(x + yi) + i\frac{\partial}{\partial y}f_i(x + yi) = 0,
	\]
	and we say that \(f\) is \emph{right-slice hyperholomorphic} if, for all \(i \in \SS\),
	\[
	    \frac{\partial}{\partial x}f_{i}(x + yi) + \frac{\partial}{\partial y} f_{i}(x + yi) i = 0.
	\]
\end{definition}

We recall that monomials with coefficients on the right, i.e. of the form \(p \mapsto p^n \a\) for \(\a \in \HH\), are left-slice hyperholomorphic, while monomials with coefficients on the left, i.e. \(p \mapsto \a p^n\), \(\a \in \HH\), are right-slice hyperholomorphic, and it follows that the same is true of finite sums of such monomials, and indeed also of power series centred at the origin (when convergent; see \cite{CSS11}). Furthermore, as \cite{ACKS15} remarks, this is in fact also true of power series centred at nonzero reals, but not of power series centred off of the real line.

We shall denote respectively by \(\HH[p]^L\) and \(\HH[p]^R\) the spaces of, respectively, left- and right-slice hyperholomorphic polynomials, that is, we define
\[
    \HH[p]^L := \bigcup_{n=0}^{\infty}\left\{p \mapsto \sum_{k=0}^{n} p^k \p_k : n \in \NN, \p_0, \ldots, \p_n \in \HH\right\}
\]
and
\[
    \HH[p]^R := \bigcup_{n=0}^{\infty}\left\{p \mapsto \sum_{k=0}^{n} \p_k p^k : n \in \NN, \p_0, \ldots, \p_n \in \HH\right\}.
\]

The pointwise product of two (left- or right-) slice hyperholomorphic functions is not, in general, slice hyperholomorphic. In place of the pointwise product, the \(\star\)-product of two slice hyperholomorphic functions, given by convolution of their coefficients, is often considered; see e.g. \cite{ACS16a}.

\begin{definition}
	Let \(m, n \in \NN\).

	If \(\p(p) = \p_0 + p\p_1 + \ldots + p^n \p_n, \q(p) = \q_0 + p \q_1 + \ldots + p^m \q_m \in \HH[p]^L\), then the \emph{\(\star\)-product} of \(\p\) and \(\q\) is given by
	\[
		(\p \star \q)(p) = \sum_{\ell = 0}^{n+m} p^{\ell} c_\ell,
	\]
	where
	\[
		c_{\ell} = \sum_{\a + \b = \ell} \p_\a \q_\b.
	\]

	Similarly, if \(\p(p) = \p_0 + \p_1 p + \ldots + \p_n p^n, \q(p) = \q_0 + \q_1 p + \ldots + \q_m p^m \in \HH[p]^R\), their \emph{\(\star\)-product} is given by
	\[
		(\p \star \q)(p) = \sum_{\ell = 0}^{n+m} c_\ell p^{\ell},
	\]
	where \(c_\ell\) is as above.
\end{definition}

We define the following natural mappings from \(\HH[p]^L\) and \(\HH[p]^R\) into the space \(\CC_i^{2\times2}[z]\) of analytic polynomials with coefficients in \(\CC_i^{2\times2}\).
\begin{definition}
	The mappings \(\P_i^L : \HH[p]^L \to \CC_i^{2\times2}[z]\) and \(\P_i^R : \HH[p]^R \to \CC_i^{2\times2}[z]\) are given by
	\[
	    \P_i^L\left(\sum_{k=0}^{n} p^k \p_k\right) := \sum_{k=0}^n \x_i(\p_k) z^k =: \P_i^R\left(\sum_{k=0}^{n} \p_k p^k\right),
	\]
    i.e. we apply \(\x_i\) coefficient-wise and not to the quaternionic variable \(p\). The inverses of \(\P_i^L\) and \(\P_i^R\) are defined on the subspace of \(\CC_i^{2\times 2}[z]\) consisting of matrix-valued analytic polynomials whose coefficients lie in the image of \(\x_i\).
\end{definition}

\subsection{\(q\)-positive Measures}

In one perspective on the classical theory of orthogonal polynomials on the unit circle, one begins with a positive measure on the unit circle \(\TT\) in \(\CC\). A known analogue in the quaternions of such a measure is that of the \emph{\(q\)-positive} measure, introduced in \cite{ACKS15} and studied further in \cite{ABCKS16} (note that while we only herein require the \(\HH\)-valued case, the general \(\HH^{n\times n}\)-valued case has been previously studied):

\begin{definition}
    \label[definition]{def:qPosMeasure}
    Given orthogonal \(i,j \in \SS\), let \(\mu\) be a quaternion-valued measure on \(\TT_i\) and write \(\mu = \mu^{(1)} + \mu^{(2)}j\) where \(\mu^{(1)}, \mu^{(2)}\) are \(\CC_i\)-valued measures. We say that \(\mu\) is \emph{\(q\)-positive} if the \(\CC_i^{2\times 2}\)-valued measure
    \[
        M_i := \begin{bmatrix} \mu^{(1)} & \mu^{(2)} \\[4pt] \overline{\mu^{(2)}} & \nu \end{bmatrix}
    \]
    is positive (that is, positive semidefinite-valued), where
    \[
        \int_0^{2\pi} e^{in\th} \, \rmd \nu := \int_0^{2\pi} e^{-in\th} \, \rmd \mu^{(1)}(\th) \quad \text{for } n \in \mathbb{Z},
    \]
    \(\mu^{(1)}\) (and hence \(\nu\)) is a positive measure, and $\mu^{(2)}$ obeys
    \[
        \int_0^{2\pi} e^{in\th} \, \rmd \mu^{(2)}(\th) = - \int_0^{2\pi} e^{-in\th} \, \rmd\mu^{(2)}(\th), \quad \text{for } n \in \mathbb{Z}.
    \]
\end{definition}

Notice that a \(q\)-positive measure is a \emph{local} object, relying upon a choice of \(i,j\in\SS\); nevertheless, there exists a corresponding \emph{global} object as with the classical theory, namely, the sequence of moments of \(\mu_i\). Intimately related, both classically and --- as we shall see --- in this quaternionic theory, is the Toeplitz matrix of moments of a measure.

Now we shall introduce the notion of a positive definite sequence in the quaternionic setting, and also a related concept of ``non-triviality" which directly generalises the idea from the usual theory of orthogonal polynomials on the unit circle (\cite{Sim05a}); we shall assume non-triviality of our positive definite sequences throughout the remainder of the paper.

\begin{definition}
    Let \(c = (c_n)_{n=-\infty}^{\infty}\) be a bi-infinite sequence in \(\HH\). We say that \(c\) is \emph{positive definite} if for each \(n \in \NN\) the Toeplitz matrix
	\[
	    T_n(c) := [c_{j-k}]_{k,j=0}^n = \begin{bmatrix}
	        c_0 & c_{1} & \cdots & c_{n} \\ 
	        c_{-1} & c_0 & \cdots & c_{n-1} \\
	        \vdots & \vdots & \ddots & \vdots \\
	        c_{-n} & c_{-(n-1)} & \cdots & c_0
	    \end{bmatrix}
	\]
	is positive semidefinite.

    If in fact \(T_n(c)\) is positive definite for all \(n \in \NN\), we say that \(c\) is a \emph{non-trivial} positive sequence.
\end{definition}

\begin{remark}
    The connection between this notion of non-triviality, the notion used for a matrix-valued measure in Section 2, and a positive measure on the unit circle not being finitely atomic in the classical setting, can be seen from Lemma 2.1 of \cite{DPS08} and the subsequent remark, and the related discussion of \cite{Sim05a}.
\end{remark}

We remark that a particular consequence of positive definiteness of \(c\) is that \(c_{-n} = \overline{c_n}\).

\begin{definition}
    Let \(i,j \in \SS\) be orthogonal and let \(\mu_i\) be a \(q\)-positive measure on \(\TT_i\). The \emph{moments} of \(\mu_i\) are
    \[
        c_n := \int_0^{2\pi} e^{i n \th} \, \rmd \mu_i(\th), \quad n \in \ZZ.
    \]
    We say that \(\mu_i\) is \emph{non-trivial} if its sequence of moments \((c_n)_{n=-\infty}^{\infty}\) is a non-trivial sequence, and we say that \(\mu_i\) is a \emph{probability \(q\)-positive measure} if \(\int_0^{2\pi} \, \rmd\mu_i(\th) = 1\), i.e. if \(c_0 = 1\).
\end{definition}

Definitionally, one may always obtain a positive definite sequence from a positive measure. The quaternionic Herglotz theorem of \cite{ACKS15} assures us that in fact, for any positive definite sequence \(c = (c_n)_{n=-\infty}^{\infty} \subseteq \HH\) and orthogonal, purely imaginary \(i,j \in \SS\) there exists a \emph{unique} \(q\)-positive measure \(\mu_{i}\) on \(\TT_{i}\) with moment sequence \(c\):

\begin{theorem}[\cite{ACKS15}, Theorem 5.5]
    The sequence \((c_n)_{n=-\infty}^{\infty}\) is positive definite if and only if, for some (and hence all) orthogonal \(i,j\in\SS\), there exists a unique \(q\)-positive measure \(\mu_i\) on \(\TT_i\) such that
    \[
        c_n = \int_0^{2\pi} e^{i n \th} \, \rmd\mu_i(\th), \quad n \in \ZZ.
    \]
\end{theorem}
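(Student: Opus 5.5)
Since this is the quaternionic Herglotz theorem of \cite{ACKS15}, the plan is to transfer the statement to the classical matrix-valued Herglotz theorem by means of \(\x_i\) and \Cref{lem:UnitaryEquiv}. Fix orthogonal \(i,j \in \SS\) and write \(c_n = c_n^{(1)} + c_n^{(2)}j\) with \(c_n^{(1)}, c_n^{(2)} \in \CC_i\).

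\emph{Easy direction.} Suppose a \(q\)-positive \(\mu_i\) has moments \(c_n\). For \(\xi \in \HH^{n+1}\) I will compute \(\xi^* T_n(c)\xi\) with \(\x_i\) and show it equals a quadratic form \(\int v(\th)^* \, \rmd M_i(\th)\, v(\th) \ge 0\), where \(v\) is an explicit \(\CC_i^2\)-valued trigonometric polynomial built from \(\xi\). Here I use:
\begin{itemize}
\item that \(\x_i\) preserves positivity;
\item \Cref{lem:UnitaryEquiv}, which reduces \(\x_i(T_n(c))\) to the block matrix \([\x_i(c_{k-\ell})]\);
\item the identification \(\x_i(c_m) = \int (\cdots)\,\rmd M_i\), which follows from the moment relations defining \(\nu\) and the symmetry of \(\mu^{(2)}\).
\end{itemize}
More precisely, \(\x_i(c_m)\) should be the \(m\)-th moment of a \(2\times2\) positive measure closely related to \(M_i\). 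The symmetry conditions on \(\nu\) and \(\mu^{(2)}\) are exactly what make the lower row \((-\overline{c^{(2)}}, \overline{c^{(1)}})\) match. So \([\x_i(c_{k-\ell})]\) is a block Toeplitz matrix of moments of a positive matrix measure, and it is therefore positive semidefinite.

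\emph{Hard direction.} Suppose \(c\) is positive definite. Then \(\x_i(T_n(c))\) is positive semidefinite for every \(n\). By \Cref{lem:UnitaryEquiv}, the block Toeplitz matrices \([\x_i(c_{k-\ell})]_{k,\ell=0}^n\) are positive semidefinite for every \(n\). The matricial Herglotz theorem (as in \cite{DPS08}) then supplies a positive \(\CC_i^{2\times2}\)-valued measure \(\tilde M\) whose moments are \(\x_i(c_n)\).

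The main obstacle is showing that \(\tilde M\) has the special structure \(M_i\). I would read off its entries from the moments:
\begin{itemize}
\item the \((1,1)\) entry has moments \(c_n^{(1)}\);
\item the \((2,2)\) entry has moments \(\overline{c_n^{(1)}} = c^{(1)}_{-n}\), which gives the reflection relation defining \(\nu\);
\item the off-diagonal moments \(c_n^{(2)}\) and \(-\overline{c_n^{(2)}}\) must be reconciled with the Hermitian symmetry \(\tilde M_{21} = \overline{\tilde M_{12}}\). This yields exactly the antisymmetry condition on \(\mu^{(2)}\), once I also use \(c_{-n} = \overline{c_n}\), expanded in \(\CC_i \oplus \CC_i j\) via \(j z = \overline{z} j\).
\end{itemize}
Possibly a conjugation by \(\mathrm{diag}(1,-1)\) or a reflection \(\th \mapsto -\th\) in the second slot is needed to match conventions; I would fix this by checking the \(n=0,\pm1\) moments. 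Then \(\mu_i := \tilde M_{11} + \tilde M_{12} j\) is \(q\)-positive with moments \(c_n\).

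\emph{Uniqueness and choice of \(i,j\).} Uniqueness follows because a measure on \(\TT_i\) is determined by its trigonometric moments, so \(\mu^{(1)}\) and \(\mu^{(2)}\) are determined by \(c^{(1)}_n\) and \(c^{(2)}_n\). Independence of the choice of \(i,j\) is automatic, since positive definiteness of \(c\) does not refer to \(i,j\).
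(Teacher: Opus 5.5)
The paper gives no proof of this statement; it quotes it from \cite{ACKS15} and uses it as a black box. Your proposal is a correct, self-contained proof. The chain of equivalences is:
\begin{enumerate}[label={\upshape(\arabic*)}]
\item positivity of $T_n(c)$ is equivalent to positivity of $\x_i(T_n(c))$;
\item by \Cref{lem:UnitaryEquiv}, this is equivalent to positivity of the block Toeplitz matrices built from the $\x_i(c_n)$;
\item the matricial Herglotz theorem then supplies $\tilde M$;
\item since a complex measure on $\TT_i$ is determined by its Fourier coefficients, the entries of $\tilde M$ are forced into the shape required by \Cref{def:qPosMeasure}.
\end{enumerate}
Uniqueness follows the same way, and the ``some/all'' clause is automatic because positive definiteness of $c$ does not refer to $i,j$. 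This is exactly the mechanism the paper presupposes whenever it speaks of ``the $\CC_i^{2\times2}$-valued measure $M_i$ with moments $(\x_i(c_n))_n$''.

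A few clarifications:
\begin{itemize}
\item \textbf{Conventions.} No diagonal conjugation or reflection is needed. With the paper's definitions one has exactly $\int_0^{2\pi} e^{in\th}\,\rmd M_i(\th)=\x_i(c_n)$, because $\int_0^{2\pi} e^{in\th}\,\rmd\overline{\mu^{(2)}}=\overline{c^{(2)}_{-n}}=-\overline{c^{(2)}_n}$ and $\int_0^{2\pi} e^{in\th}\,\rmd\nu=c^{(1)}_{-n}=\overline{c^{(1)}_n}$.
\item \textbf{Hard direction.} The antisymmetry of $\tilde M_{12}$ already follows from the Hermitian symmetry $\tilde M_{21}=\overline{\tilde M_{12}}$. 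The $\nu$-relation between $\tilde M_{22}$ and $\tilde M_{11}$ follows from $\tilde M_{11}$ being a positive, hence real, measure; this is also what makes $\tilde M_{11}$ an admissible $\mu^{(1)}$. So $c_{-n}=\overline{c_n}$ is not needed as a separate input.
\item \textbf{Easy direction.} What you actually use is that $\x_i$ \emph{reflects} positivity ($\x_i(A)\succeq 0\Rightarrow A\succeq 0$), not merely that it preserves it. Your direct identification of $\xi^*T_n(c)\xi$ with $\int v^*\,\rmd M_i\,v$ gives this anyway.
\item \textbf{Indexing.} Your $[\x_i(c_{k-\ell})]$ uses the opposite index convention to $T_n(c)=[c_{j-k}]$. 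For Toeplitz matrices the two are conjugate by the reversal permutation, so nothing changes.
\end{itemize}
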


This bijective relationship allows us to abstract away from the classical setting of measures on \(\TT\) to consider the global object \(c\), a doubly-infinite positive definite sequence of quaternions.

\subsection{Consistency of Verblunsky Coefficients}
\label{subsec:ConsistencyVCs}

In this subsection, we discuss the two methods by which one might acquire quaternionic Verblunsky coefficients and see that they in fact coincide.

\begin{definition}
    Let \(M\) be a matrix-valued measure on \(\TT\). By the \emph{Verblunsky coefficients of \(M\)}, we mean the unique sequence of complex matrices \((\a_n)_{n=0}^{\infty}\) associated to \(M\) by \cite[Theorem 3.12]{DPS08}.
\end{definition}

\begin{lemma}
\label{lem:ImageOfXi}
    Let \(i,j \in \SS\) be orthogonal and let \(M_i\) be a \(\CC_i^{2\times2}\)-valued probability measure on \(\TT_i\). The moments \((C_n)_{n=-\infty}^{\infty}\) lie in the image of \(\x_i\) if and only if the Verblunsky coefficients \((\a_n)_{n=-\infty}^{\infty}\) do.
\end{lemma}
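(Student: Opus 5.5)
The plan is to describe the image of \(\x_i\) algebraically and then run an induction along the Schur algorithm, using \Cref{thm:MatrixSchurRec} and \Cref{thm:MatrixVerForm}. Let \(J := \begin{bmatrix} 0 & 1 \\ -1 & 0 \end{bmatrix}\). A matrix \(X \in \CC_i^{2\times 2}\) lies in the image of \(\x_i\) exactly when \(J\overline{X}J^{-1} = X\). Hence this image \(\mathcal{Q}\) is a real \(*\)-subalgebra of \(\CC_i^{2\times2}\), closed under sums, products, adjoints and inverses. Concretely, \(\mathcal{Q} = \{aI + B\}\) with \(a\in\RR\) and \(B\) in the real span of the Pauli-type matrices, so \(\mathcal{Q} \cong \HH\). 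It is also closed under continuous functional calculus by real functions of self-adjoint elements. In particular, if \(\a \in \mathcal{Q}\) with \(\norm{\a}<1\), then \(\a^*\a = \lvert q\rvert^2 I_2\) is real scalar, so \(\r^L = \r^R = (1-\lvert q\rvert^2)^{1/2} I_2 \in \mathcal{Q}\). Thus every noncommutative rational expression in elements of \(\mathcal{Q}\), their adjoints and the defect matrices stays in \(\mathcal{Q}\).

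For the forward direction, assume \(C_n \in \mathcal{Q}\) for all \(n\). We show \(\a_n \in \mathcal{Q}\) by strong induction using \Cref{thm:MatrixVerForm}. Since \(C_0 = I_2\) for a probability measure, \(C_1 = \a_0\), which gives the base case. Suppose \(\a_0,\dots,\a_{n-2}\in\mathcal{Q}\). Then \(\tilde R_n(\a_0,\dots,\a_{n-2},\a_0^*,\dots,\a_{n-2}^*) \in \mathcal{Q}\), because the rational function is evaluated at elements of an inverse-closed algebra. The defect products \(\r_0^R\cdots\r_{n-2}^R\) and \(\r_{n-2}^L\cdots\r_0^L\) are invertible nonzero real scalars, using non-triviality, which I will assume or reduce to as in \cite{DPS08}. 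Solving
\[
\a_{n-1} = (\r_0^R\cdots\r_{n-2}^R)^{-1}\big(C_n - \tilde R_n\big)(\r_{n-2}^L\cdots\r_0^L)^{-1}
\]
then shows \(\a_{n-1}\in\mathcal{Q}\).

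For the converse, assume every \(\a_n \in \mathcal{Q}\). Then \Cref{thm:MatrixVerForm} directly expresses each \(C_n\), \(n \geq 1\), as a rational expression in elements of \(\mathcal{Q}\), so \(C_n \in \mathcal{Q}\). Also \(C_0 = I_2\in\mathcal{Q}\), and \(C_{-n} = C_n^* \in \mathcal{Q}\).

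The main obstacle is justifying that the noncommutative rational functions \(R_k\) and \(\tilde R_n\) are honestly built by \(+,\times,{}^{-1},{}^*\) and square roots of the defect matrices, with no other operations. If that is unclear, I would avoid the rational functions and argue directly on the Schur algorithm. Assuming \(C_n \in \mathcal{Q}\), the Herglotz function \(F\) has coefficients in \(\mathcal{Q}\), and since \(\mathcal{Q}\) is a closed inverse-closed algebra, so does \(zf = (F-I)(F+I)^{-1}\). Then each \(f_{n+1}\) obtained via (3.58) of \cite{DPS08} has Taylor coefficients in \(\mathcal{Q}\), and so \(\a_n = f_n(0)\in\mathcal{Q}\). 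The converse runs the same route backwards, using the inverse Schur step \(f_n = (\a_n + z\r_n^R f_{n+1}(\r_n^L)^{-1})(I+\a_n^* z \r_n^R f_{n+1}(\r_n^L)^{-1})^{-1}\)-type formulas. The only check needed is the coefficient-level closure, which follows by power-series expansion.
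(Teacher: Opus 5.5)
Your proposal is correct and follows essentially the same route as the paper. Both directions come from \Cref{thm:MatrixVerForm}, with the moments-to-coefficients direction obtained by inductively solving that formula for $\alpha_{n-1}$ after inverting the defect products. Your extra remarks usefully spell out points the paper leaves implicit: the image of $\chi_i$ is an inverse-closed real $*$-subalgebra in which $\rho_n^{L}=\rho_n^{R}$ is a positive real scalar, which matters because these square roots are not literally rational in $\alpha_n,\alpha_n^*$; and non-triviality is needed for the defects to be invertible.
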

\begin{proof}
    Since \(\x_i\) is in particular a \(*\)-homomorphism, we see from \Cref{thm:MatrixVerForm} that if \((\a_n)_{n=0}^{\infty}\) lies entirely in the image of \(\x_i\), so too must \((C_n)_{n = 1}^{\infty}\) and hence \((C_n)_{n\in\ZZ\setminus\{0\}}\). On the other hand, since \(M\) is a probability measure, \(C_0 = I = \x_i(1)\), so that \((C_n)_{n=-\infty}^{\infty}\) is contained wholly in the image of \(\x_i\).

    Conversely, we can rearrange the conclusion of \Cref{thm:MatrixVerForm} to
    \[
        \a_{n-1} = (\r_{n-2}^R)^{-1} \cdots (\r_0^R)^{-1} \left(C_n - \tilde{R}_n(\a_0, \ldots, \a_{n-2}, \a_0^*, \ldots, \a_{n-2}^*)\right) (\r_0^L)^{-1} \cdots (\r_{n-2}^L)^{-1}
    \]
    and then it follows inductively that if \((C_n)_{n=-\infty}^{\infty}\) lies entirely in the image of \(\x_i\), so does \((\a_n)_{n=0}^{\infty}\).
\end{proof}

Let \(c = (c_n)_{n=-\infty}^{\infty}\) be a non-trivial positive definite sequence with \(c_0 = 1\), and for any \(i' \in \SS\), let \(\mu_{i'}\) be the unique \(q\)-positive measure on \(\TT_{i'}\) with moments \(c\). As a consequence of the quaternionic Herglotz representation theorem of \cite{ACKS15}, the functions \(F_{i'} : \DD_{i'} \to \HH\) given by
\[
    F_{i'}(z) := \int_{0}^{2\pi} \frac{1 + ze^{-i'\th}}{1 - ze^{-i'\th}} \, \rmd\mu_{i'}(\th)
\]
satisfies \(\Re F_{i'}(z) \geq 0\). A calculation almost identical to that of the classical setting allows one to express \(F_{i'}\) as
\[
    F_{i'}(z) = 1 + 2 \sum_{n=1}^{\infty} z^k c_k.
\]
Make the global definition, for \(p \in \BB\), of
\[
    F(p) := 1 + 2 \sum_{n=1}^{\infty} p^k c_k.
\]
Certainly there is a unique \(F\) for each \(c\). Moreover, such an \(F\) is automatically left-slice hyperholomorphic by virtue of its power series representation, and since any \(p \in \BB\) lies in \(\DD_{i'}\) for some \(i' \in \SS\), we have \(\Re F(p) = \Re F_{i'}(p) \geq 0\). 

Next we Cayley transform \(F\) to obtain a quaternionic Schur function \(f\), i.e. define
\[
    f(p) := (1 + F(p)) \star (1 - F(p))^{-\star};
\]
again, this association is bijective. 

\begin{definition}
    Let \(c = (c_n)_{n=-\infty}^{\infty}\) be a non-trivial positive definite sequence with \(c_0 = 1\), and let \(f\) be the quaternionic Schur function associated to \(c\) as above. By the \emph{Verblunsky coefficients of} \(c\), we mean the unique sequence of quaternions \((\g_n)_{n=0}^{\infty}\) associated to \(f\) by the quaternionic Schur algorithm (see \cite{ACS16b} as well as the discussion following Remark 10.1.2 of \cite{ACS16a} where the \(\g_n\) are called \emph{Schur coefficients}).
\end{definition}

\begin{remark}
    The quaternionic Schur algorithm first appeared in the paper \cite{ACS16b}; the book \cite{ACS16a} provides authoritative further exposition on both the Schur algorithm and, more broadly, Schur analysis in the quaternionic setting.
\end{remark}

Let \((\g_n)_{n=0}^{\infty}\) be the Verblunsky coefficients of \(c\). For \(n\in\NN\), define \(\a_n := \x_i(\g_n) \in \CC_i^{2\times 2}\) and use the matrix Verblunsky's theorem \cite[Theorem 3.12]{DPS08} to obtain \(M_i\), the unique positive matrix-valued measure with Verblunsky coefficients \((\a_n)_{n=0}^{\infty}\). Further denote the moments of \(M_i\) by \((C_n)_{n=-\infty}^{\infty} \subseteq \CC_i^{2\times2}\).

On the other hand, if for \(n\in\ZZ\) we define \(\tilde{C}_n := \x_i(c_n) \in \CC_i^{2\times 2}\), then the solution to the matrix-valued moment problem gives a unique positive matrix-valued measure with moment sequence \((\tilde{C}_n)_{n=-\infty}^{\infty}\), which we denote \(\tilde{M}_i\); let the Verblunsky coefficients of this measure be \((\tilde{\a}_n)_{n=0}^{\infty}\). Observe that since \((\tilde{C}_n)_{n=-\infty}^{\infty}\) lies in the image of \(\x_i\), so too does \((\tilde{\a}_n)_{n=0}^{\infty}\), so \(\tilde{\a}_n \tilde{\a}_n^* = \tilde{\a}_n^* \tilde{\a}_n\) and hence we may define \(\tilde{\r}_n := (I - \tilde{\a}_n \tilde{\a}_n^*)^{\frac12} = (I - \tilde{\a}_n^* \tilde{\a}_n)^{\frac12}\). It follows from \Cref{thm:MatrixVerForm} that
\[
    C_{n} = \r_0 \cdots \r_{n-2} \a_{n-1} \r_{n-2} \cdots \r_0 + \tilde{R}_{n}(\a_0, \ldots, \a_{n-2}, \a_0^*, \ldots, \a_{n-2}^*).
\]
and
\[
    \tilde{C}_n = \tilde{\r}_0 \cdots \tilde{\r}_{n-2} \tilde{\a}_{n-1} \tilde{\r}_{n-2} \cdots \tilde{\r}_0 + \tilde{R}_{n}(\tilde{\a}_0, \ldots, \tilde{\a}_{n-2}, \tilde{\a}_0^*, \ldots, \tilde{\a}_{n-2}^*)
\]
Now, the right-hand sides both of these relations are rational expressions in, respectively, \((\a_n)_{n=0}^{\infty}\), \((\a_n^*)_{n=0}^{\infty}\) and \((\tilde{\a}_n)_{n=0}^{\infty}\), \((\tilde{\a}_n^*)_{n=0}^{\infty}\), which are by \Cref{lem:ImageOfXi} sequences which lie entirely in the image of \(\x_i\). Let \((\tilde{\g}_n)_{n=-\infty}^{\infty}\) be the preimages of \((\tilde{\a}_n)_{n=0}^{\infty}\) under \(\x_i\). Taking \(\x_i^{-1}\) of the formula for \(\tilde{C}_n\) then gives an essentially identical formula for \(c_n\) in terms of \((\tilde{\g}_n)_n\), i.e. \((\tilde{\g}_n)_{n=0}^{\infty}\) is another sequence of quaternionic Verblunsky coefficients associated to \(c\).

We have now bijectively associated two sequences of quaternionic Verblunsky coefficients to \(c\) --- \((\g_n)_{n=0}^{\infty}\) and \((\tilde{\g}_n)_{n=0}^{\infty}\) --- and so, by the uniqueness of quaternionic Verblunsky coefficients \cite{ACS16a}, these sequences must coincide.

\section{Quaternionic Orthogonal Polynomials}

Given orthogonal \(i,j \in \SS\) and a non-trivial \(q\)-positive probability measure \(\mu_i\) on \(\TT_i\) with moments \(c \subseteq \HH\), the immediate left and right generalisations of an inner product (on polynomials) to this setting,
\[
    \int_0^{2\pi} \overline{\q(e^{i\th})} \, \rmd \mu_i(\th) \p(e^{i\th}) \quad \text{ and } \quad \int_0^{2\pi} \p(e^{i\th}) \, \rmd \mu_i(\th) \overline{\q(e^{i\th})},
\]
fail in general to be definite forms (due to \(\mu_i\) being quaternion-valued). We give an alternate definition of an inner product with respect to a \(q\)-positive measure and obtain an integral form for it in terms of the decomposition \(\mu_i = \mu_i^{(1)} + \mu_i^{(2)}j\), for polynomials in both \(\HH[p]^L\) and \(\HH[p]^R\).

\begin{definition}
\label{def:R-IP}
	Let
	\[
	    \p(p) = \sum_{k=0}^n p^k \p_k, \quad \q(p) = \sum_{\ell = 0}^m p^k \q_k \in \HH[p]^L;
	\]
	if \(m < n\), set \(\q_k = 0\) for \(m < k \leq n\) and symmetrically if \(m > n\) set \(\p_k = 0\) for \(n < k \leq m\), and define their coefficient vectors
	\[
	    \hat{\p} := \begin{bmatrix} \p_0 \\ \vdots \\ \p_N\end{bmatrix}, \quad \hat{\q} := \begin{bmatrix} \q_0 \\ \vdots \\ \q_N\end{bmatrix}
	\]
	where \(N = \max\{n,m\}\). 
	We define
	\[
	    \langle \p, \q \rangle_R := \hat{\q}^* T_N(c) \hat{\p}.
	\]
\end{definition}

\begin{remark}
	By non-triviality of \(\mu_i\) and hence of \(c\), this indeed defines an inner product --- i.e. a positive definite sesquilinear form --- on \(\HH[p]^L\). Note that the reason we call this form \(\langle\cdot,\cdot\rangle_R\) is because we shall see later that the orthogonal polynomials in \(\HH[p]^L\) are closely related to the right-orthogonal polynomials of the matrix setting.
\end{remark}

\begin{theorem}
\label{thm:R-IP}
	Let \(\p, \q \in \HH[p]^L\). Then we may rewrite \( \langle \p, \q \rangle_R\) as
	\[
		\langle \p, \q \rangle_R = \int_0^{2\pi} \overline{\q(e^{i\th})} \, \rmd\mu_i^{(1)}(\th) \p(e^{i\th}) + \int_0^{2\pi} \overline{\q(e^{i\th})} \, \rmd\mu_i^{(2)}(\th) j \p(e^{-i\th}),
	\]
	and moreover we may express this as a perturbation of the ``expected" sesquilinear form as
	\[
		\langle \p, \q \rangle_R = \int_0^{2\pi} \overline{\q(e^{i\th})} \, \rmd\mu_i(\th) \p(e^{i\th}) + \int_0^{2\pi} \overline{\q(e^{i\th})} \, \rmd \mu_i^{(2)}(\th) j \left(\p(e^{-i\th}) - \p(e^{i\th}) \right).
	\]
\end{theorem}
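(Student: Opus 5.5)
The plan is a direct computation: expand the quadratic form $\hat{\q}^* T_N(c) \hat{\p}$ entrywise, substitute the integral representation of each moment, and then reassemble the integrand into $\overline{\q(e^{i\th})}$ on the left and $\p(e^{\pm i\th})$ on the right. The only genuine care needed is over noncommutativity.

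\emph{Step 1 (expansion).} Since $T_N(c)=[c_{j-k}]_{k,j=0}^N$, we have
\[
\langle \p,\q\rangle_R=\sum_{k,\ell=0}^N \overline{\q_k}\, c_{\ell-k}\, \p_\ell .
\]
Write $\mu_i=\mu_i^{(1)}+\mu_i^{(2)}j$, so that
\[
c_{\ell-k}=\int_0^{2\pi} e^{i(\ell-k)\th}\,\rmd\mu_i^{(1)}(\th)+\int_0^{2\pi} e^{i(\ell-k)\th}\,\rmd\mu_i^{(2)}(\th)\,j .
\]
The sums are finite, so they may be exchanged with the integrals.

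\emph{Step 2 (reassembly).} Three elementary facts are used, and this bookkeeping is the only real obstacle.
\begin{itemize}
\item First, $\overline{\q(e^{i\th})}=\sum_k \overline{e^{ik\th}\q_k}=\sum_k\overline{\q_k}\,e^{-ik\th}$, because conjugation reverses products. This is exactly why $\overline{\q_k}$ sits to the left of $e^{-ik\th}$ in the expansion.
\item Second, $e^{-ik\th}$, $e^{i\ell\th}$ and the $\CC_i$-valued measures $\mu_i^{(1)},\mu_i^{(2)}$ all commute with one another.
\item Third, $zj=j\overline{z}$ for $z\in\CC_i$, since $ij=-ji$.
\end{itemize}
In the $\mu_i^{(1)}$ term, the integrand is $\overline{\q_k}e^{-ik\th}\,\rmd\mu_i^{(1)}\,e^{i\ell\th}\p_\ell$. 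Summing over $k$ and $\ell$ gives $\overline{\q(e^{i\th})}\,\rmd\mu_i^{(1)}(\th)\,\p(e^{i\th})$. In the $\mu_i^{(2)}$ term, the integrand is $\overline{\q_k}e^{-ik\th}\,\rmd\mu_i^{(2)}\,e^{i\ell\th}j\p_\ell$. Moving $e^{i\ell\th}$ past $j$ turns it into $j e^{-i\ell\th}$, so summing gives $\overline{\q(e^{i\th})}\,\rmd\mu_i^{(2)}(\th)\,j\,\p(e^{-i\th})$. This proves the first formula. No use of the symmetry conditions on $\mu^{(2)}$ in \Cref{def:qPosMeasure} should be needed here.

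\emph{Step 3 (second formula).} Add and subtract $\int_0^{2\pi}\overline{\q(e^{i\th})}\,\rmd\mu_i^{(2)}(\th)\,j\,\p(e^{i\th})$. Because $\rmd\mu_i = \rmd\mu_i^{(1)}+\rmd\mu_i^{(2)}j$ with $j$ placed immediately to the right of the measure, the $\mu_i^{(1)}$ integral combines with the added term to give $\int_0^{2\pi}\overline{\q(e^{i\th})}\,\rmd\mu_i(\th)\,\p(e^{i\th})$. The leftover pieces are $\int_0^{2\pi}\overline{\q(e^{i\th})}\,\rmd\mu_i^{(2)}(\th)\,j\,\bigl(\p(e^{-i\th})-\p(e^{i\th})\bigr)$, which is the claimed perturbation.
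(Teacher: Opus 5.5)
Your proposal is correct and follows essentially the same route as the paper. You expand $\hat{\q}^* T_N(c)\hat{\p}$ entrywise, insert the moment integrals with $\mu_i=\mu_i^{(1)}+\mu_i^{(2)}j$, use commutativity within $\CC_i$ together with $e^{ik\th}j=je^{-ik\th}$ to reassemble $\overline{\q(e^{i\th})}$ and $\p(e^{\pm i\th})$, and then add and subtract the $\mu_i^{(2)}$ term; the only cosmetic difference is that the paper sums out the $\q$-index before splitting the measure, whereas you split first and reassemble both indices at once.
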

\begin{proof}
	Calculate directly that
	\begin{align*}
	    \langle \p, \q \rangle_R & = \sum_{\ell = 0}^N \sum_{k = 0}^N \overline{\q_\ell} c_{k-\ell} \p_k = \sum_{\ell = 0}^m \sum_{k = 0}^n \overline{\q_\ell} c_{k-\ell} \p_k  \\
	    & = \sum_{\ell = 0}^m \sum_{k = 0}^n \overline{\q_\ell} \left(\int_0^{2\pi} e^{i(k-\ell)\th} \, \rmd \mu_i(\th) \right) \p_k \\
	    & = \sum_{\ell=0}^m \sum_{k=0}^n \int_0^{2\pi} \overline{\q_\ell} e^{-i\ell\th} e^{ik\th} \, \rmd \mu_i(\th) \p_k \\
	    & = \sum_{k=0}^n \int_0^{2\pi} \overline{\q(e^{i\th})} e^{ik\th} \, \rmd \mu_i(\th) \p_k \\
	    & = \sum_{k=0}^n \int_0^{2\pi} \overline{\q(e^{i\th})} \, \rmd\mu_i^{(1)}(\th) e^{ik\th} \p_k + \int_0^{2\pi} \overline{\q(e^{i\th})} \, \rmd\mu_i^{(2)}(\th)j e^{-ik\th} \p_k \\
	    & = \int_0^{2\pi} \overline{\q(e^{i\th})} \, \rmd\mu_i^{(1)}(\th) \p(e^{i\th}) + \int_0^{2\pi} \overline{\q(e^{i\th})} \, \rmd\mu_i^{(2)}(\th) j \p(e^{-i\th})
	\end{align*}
	(since \(\mu_i^{(1)}\) is real valued, \(\mu_i^{(2)}\) is \(\CC_i\) valued and hence commutes with \(e^{ik\th}\), and \(ij = -ji\) implies that \(e^{ik\th} j = j e^{-ik\th}\)).
	
	This is almost the inner product with respect to \(\mu_i\) that one might initially guess at, with the difference of a ``twist" in \(\p\) in the \(\mu_i^{(2)}\) term. To obtain the second form claimed, rewrite this as follows:
	\begin{align*}
	    \langle \p, \q \rangle_R & = \int_0^{2\pi} \overline{\q(e^{i\th})} \, \rmd\mu_i^{(1)}(\th) \p(e^{i\th}) + \int_0^{2\pi} \overline{\q(e^{i\th})} \, \rmd\mu_i^{(2)}(\th) j \p(e^{-i\th}) \\ 
	    & \quad \quad \quad + \left(\int_0^{2\pi} \overline{\q(e^{i\th})} \, \rmd\mu_i^{(2)}(\th) j \p(e^{i\th}) - \int_0^{2\pi} \overline{\q(e^{i\th})} \, \rmd\mu_i^{(2)}(\th) j \p(e^{i\th})\right) \\
	    & = \int_0^{2\pi} \overline{\q(e^{i\th})} \, \rmd\mu_i(\th) \p(e^{i\th}) + \int_0^{2\pi} \overline{\q(e^{i\th})} \, \rmd \mu_i^{(2)}(\th) j \left(\p(e^{-i\th}) - \p(e^{i\th}) \right).
	\end{align*}
\end{proof}

The discussion for right-slice hyperholomorphic polynomials is much the same.

\begin{definition}
	Let
	\[
	    \p(p) = \sum_{k=0}^n \p_k p^k, \quad \q(p) = \sum_{\ell = 0}^m \q_k p^k \in \HH[p]^R
	\]
	and define \(\hat{\p}, \hat{\q}\) as in \Cref{def:R-IP}. Then
	\[
	    \langle \p, \q \rangle_L := \hat{\p}^{\intercal} T_n(c) \overline{\hat{\q}}
	\]
	is a positive definite sesquilinear form.
\end{definition}

With a very similar calculation, one arrives at the following result.

\begin{theorem}
	Let \(\p, \q \in \HH[p]^R\). Then we may rewrite \( \langle \p, \q \rangle_L\) as
	\[
		\langle \p, \q \rangle_L = \int_0^{2\pi} \p(e^{i\th}) \, \rmd\mu_i^{(1)}(\th) \overline{\q(e^{i\th})} + \int_0^{2\pi} \p(e^{i\th}) \, \rmd\mu_i^{(2)}(\th) j \overline{\q(e^{-i\th})},
	\]
	and once more this can be rewritten as a perturbation of the ``expected" sesquilinear form as
	\[
		\langle \p, \q \rangle_L = \int_0^{2\pi} \p(e^{i\th}) \, \rmd\mu_i(\th) \overline{\q(e^{i\th})} + \int_0^{2\pi} \p(e^{i\th}) \, \rmd\mu_i^{(2)}(\th) j \left(\overline{\q(e^{-i\th}) - \q(e^{i\th})}\right).
	\]
\end{theorem}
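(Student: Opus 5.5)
The plan mirrors the proof of \Cref{thm:R-IP}, with the roles of left and right exchanged. Write \(\p(p) = \sum_{k=0}^n \p_k p^k\) and \(\q(p) = \sum_{\ell=0}^m \q_\ell p^\ell\), and pad the coefficients with zeros up to \(N = \max\{n,m\}\). Expanding the bilinear form entrywise gives
\[
    \langle \p, \q \rangle_L = \hat{\p}^{\intercal} T_N(c)\overline{\hat{\q}} = \sum_{k=0}^N \sum_{\ell=0}^N \p_k c_{\ell-k} \overline{\q_\ell} = \sum_{k=0}^n\sum_{\ell=0}^m \p_k \left(\int_0^{2\pi} e^{i(\ell-k)\th}\, \rmd\mu_i(\th)\right)\overline{\q_\ell}.
\]
First I will check the index convention: the \((k,\ell)\) entry of \(T_N(c)\) is \(c_{\ell-k}\).

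Next I will move the exponentials inside the integral so that they sit next to the coefficients. The factor \(e^{-ik\th}\) commutes with the real measure \(\mu_i^{(1)}\) and with the \(\CC_i\)-valued measure \(\mu_i^{(2)}\), and it can be combined with \(\p_k\) on the left to give \(\p_k e^{-ik\th}\). The factor \(e^{i\ell\th}\) must pass to the right of the measure to reach \(\overline{\q_\ell}\).

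Splitting \(\mu_i = \mu_i^{(1)} + \mu_i^{(2)}j\), the \(\mu_i^{(1)}\) term contributes \(\rmd\mu_i^{(1)}\, e^{i\ell\th}\overline{\q_\ell}\). In the \(\mu_i^{(2)}\) term we use \(j e^{i\ell\th} = e^{-i\ell\th} j\), in the form \(e^{i\ell\th}\,\rmd\mu_i^{(2)} j = \rmd\mu_i^{(2)}\, j\, e^{-i\ell\th}\), which gives \(\rmd\mu_i^{(2)} j\, e^{-i\ell\th}\overline{\q_\ell}\). Summing over \(\ell\), and using \(\overline{\q(e^{i\th})} = \sum_\ell e^{-i\ell\th}\overline{\q_\ell}\), I obtain:
\begin{itemize}
    \item from the \(\mu_i^{(1)}\) part, \(\sum_\ell e^{i\ell\th}\overline{\q_\ell} = \overline{\q(e^{-i\th})}\);
    \item from the \(\mu_i^{(2)}\) part, \(\overline{\q(e^{i\th})}\).
\end{itemize}

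This is the main obstacle. The naive computation produces the twisted argument in the \(\mu_i^{(1)}\) term, whereas the statement places it in the \(\mu_i^{(2)}\) term. Equivalently, the \(\p\) side carries \(\sum_k \p_k e^{-ik\th} = \p(e^{-i\th})\) rather than \(\p(e^{i\th})\).

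To resolve this I will substitute \(\th \mapsto -\th\), using the symmetry built into \Cref{def:qPosMeasure}:
\begin{itemize}
    \item \(\mu_i^{(1)}\) is positive, hence real-valued, and its moments satisfy \(c^{(1)}_{-n} = \overline{c^{(1)}_n}\);
    \item the moments of \(\mu_i^{(2)}\) are odd, \(\int e^{in\th}\,\rmd\mu_i^{(2)} = -\int e^{-in\th}\,\rmd\mu_i^{(2)}\).
\end{itemize}
Since the integrands are trigonometric polynomials, every integral can be verified moment by moment. The concrete approach is therefore to compare the two sides of the claimed identity coefficientwise in \(\p_k, \q_\ell\). The left side yields \(\p_k c_{\ell-k}\overline{\q_\ell}\). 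The right side yields
\[
    \p_k\left(\int e^{i(k-\ell)\th}\,\rmd\mu_i^{(1)} + \int e^{i(k+\ell)\th}\,\rmd\mu_i^{(2)}\, j\right)\overline{\q_\ell},
\]
after moving \(e^{i\ell\th}\) past \(j\). I then need to match this against
\[
    c_{\ell-k} = \int e^{i(\ell-k)\th}\,\rmd\mu_i^{(1)} + \int e^{i(\ell-k)\th}\,\rmd\mu_i^{(2)}\, j.
\]

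I expect this matching to go through by combining the conjugation symmetry \(c_{-n} = \overline{c_n}\) with the reflection relations for \(\mu_i^{(1)}\) and \(\mu_i^{(2)}\) (via \(\nu\) in \Cref{def:qPosMeasure}). If the exponents do not line up exactly, the fallback is to note that \(\langle\cdot,\cdot\rangle_L\) should be read through the reflection \(\th\mapsto-\th\) of \(\mu_i\), which by \Cref{def:qPosMeasure} has the same structure. I will then redo the first step with that convention.

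Once the first displayed identity is established, the second form is immediate. Add and subtract \(\int_0^{2\pi} \p(e^{i\th})\,\rmd\mu_i^{(2)}(\th)\, j\,\overline{\q(e^{i\th})}\) and recombine \(\mu_i^{(1)} + \mu_i^{(2)}j = \mu_i\), exactly as in the last step of \Cref{thm:R-IP}.
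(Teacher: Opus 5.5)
Your diagnosis in the ``main obstacle'' paragraph is correct. However, your proposed repair does not work, and the coefficient matching you expect to go through fails.

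First, a slip. In the \(\mu_i^{(2)}\) term of the claimed right-hand side, \(\overline{\q(e^{-i\th})} = \sum_\ell e^{i\ell\th}\overline{\q_\ell}\) and \(j e^{i\ell\th} = e^{-i\ell\th} j\). So the exponent is \(e^{i(k-\ell)\th}\), not \(e^{i(k+\ell)\th}\). With this corrected, the right-hand side of the first displayed identity is exactly \(\sum_{k,\ell}\p_k c_{k-\ell}\overline{\q_\ell}\), whereas the definition gives \(\hat{\p}^{\intercal}T_N(c)\overline{\hat{\q}} = \sum_{k,\ell}\p_k c_{\ell-k}\overline{\q_\ell}\).

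These do not agree. For \(\p = 1\) and \(\q(p) = p\) they are \(c_{-1} = \overline{c_1}\) and \(c_1\), which differ whenever \(c_1 \notin \RR\). For a concrete case, take \(\mu_i^{(2)} = 0\) and \(\mu_i^{(1)}\) any non-trivial probability measure whose first moment is not real.

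The substitution \(\th \mapsto -\th\) does not help either. By \Cref{def:qPosMeasure}, the reflection of \(\mu_i^{(1)}\) is \(\nu\), which equals \(\mu_i^{(1)}\) only when all moments of \(\mu_i^{(1)}\) are real. The reflection of \(\mu_i^{(2)}\) is \(-\mu_i^{(2)}\); this is exactly the odd-moment condition you cite. Substituting into your naive formula therefore gives \(\int_0^{2\pi} \p(e^{i\th})\,\rmd\nu(\th)\,\overline{\q(e^{i\th})} - \int_0^{2\pi} \p(e^{i\th})\,\rmd\mu_i^{(2)}(\th)\,j\,\overline{\q(e^{-i\th})}\), not the claimed expression.

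So, with the definition read literally, the correct identity is the one you first obtained:
\[
    \langle \p, \q \rangle_L = \int_0^{2\pi} \p(e^{-i\th})\,\rmd\mu_i^{(1)}(\th)\,\overline{\q(e^{-i\th})} + \int_0^{2\pi} \p(e^{-i\th})\,\rmd\mu_i^{(2)}(\th)\,j\,\overline{\q(e^{i\th})}.
\]
The statement as displayed is then false, and no argument from the symmetries of a \(q\)-positive measure can close your gap.

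The paper's proof is only ``a very similar calculation'' to \Cref{thm:R-IP}. That calculation silently uses the transposed form \(\langle \p, \q\rangle_L = \sum_{k,\ell} \p_k c_{k-\ell}\overline{\q_\ell} = \hat{\p}^{\intercal} T_N(c)^{\intercal}\overline{\hat{\q}}\). The paper relies on this same convention later, where orthonormality of the \(\pi_n^L\) is written with \(c_{k-\ell}\), matching \(\int f\,\rmd M_i\, g^*\).

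Your fallback should be stated as exactly this change of definition, equivalently the literal definition applied to the sequence \((c_{-n})_n\). It should not be phrased as a reinterpretation of \(\mu_i\) in the integrals, which just reproduces \(\nu\) and \(-\mu_i^{(2)}\). With the corrected convention no symmetry argument is needed:
\begin{enumerate}
    \item Write \(\p_k c_{k-\ell} \overline{\q_\ell} = \int_0^{2\pi} \p_k e^{ik\th} e^{-i\ell\th}\,\rmd\mu_i(\th)\,\overline{\q_\ell}\).
    \item Split \(\mu_i = \mu_i^{(1)} + \mu_i^{(2)} j\).
    \item Move \(e^{-i\ell\th}\) to the right of the measure, using \(e^{-i\ell\th}\,\rmd\mu_i^{(2)}\,j = \rmd\mu_i^{(2)}\,j\,e^{i\ell\th}\).
    \item Sum over \(k, \ell\) to obtain the first identity.
\end{enumerate}
Your add-and-subtract step then gives the second identity.
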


The proof is almost identical to that of \Cref{thm:R-IP}.

In the complex-matricial setting, we shall work with the following sesquilinear forms, or ``matrix-valued inner products", which are linear in the first entry.

\begin{definition}
    Given a matrix-valued measure \(M_i\) on \(\TT_i\), we define the matrix-valued inner products \(\langle\cdot,\cdot\rangle_L\) and \(\langle\cdot,\cdot\rangle_R\) on \(f,g \in \CC_i^{2\times2}[z]\) by
    \[
        \langle f, g \rangle_L := \int_0^{2\pi} f(e^{i\th}) \, \rmd M_i(\th) g(e^{i\th})^*
    \]
    and
    \[
        \langle f, g \rangle_R := \int_0^{2\pi} g(e^{i\th})^* \, \rmd M_i(\th) f(e^{i\th}).
    \]
\end{definition}

\begin{remark}
    If we denote by \(\llangle \cdot, \cdot \rrangle_L\) and \(\llangle \cdot, \cdot \rrangle_R\) the matrix-valued inner products on \(\CC^{2\times2}[z]\) with respect to a positive \(2 \times 2\) matrix-valued measure \(M_i\) defined in \cite{DPS08}, which are linear in the second entry, then these are related to the forms of the previous definition by
    \[
        \langle f, g \rangle_L = \llangle g, f \rrangle_L
    \]
    and
    \[
        \langle f, g \rangle_R = \llangle g, f \rrangle_R.
    \]
\end{remark}

We are now in a position to define the orthonormal polynomials associated to a non-trivial positive definite sequence (or to a \(q\)-positive measure).

\begin{definition}
    Let \(i,j \in \SS\) be orthogonal and let \(c\) be a non-trivial positive definite sequence in \(\HH\) with associated \(q\)-positive measure \(\mu_i\) on \(\TT_i\). The \emph{orthonormal polynomials} associated to \(c\) (or to \(\mu_i\)) are the two sequences obtained via the Gram-Schmidt algorithm in with the coefficients on the right and left, using the inner products defined above. Gram-Schmidt with the coefficients on the right via the inner product \(\langle \cdot, \cdot \rangle_L\) produces the \emph{left-orthonormal polynomials}  \((\q_n^L)_{n=0}^{\infty}\), and Gram-Schmidt with the coefficients on the left and the inner product \(\langle \cdot, \cdot \rangle_R\) produces the \emph{right-orthonormal polynomials} \((\q_n^R)_{n=0}^{\infty}\). 
\end{definition}

We refer the reader to \cite[Theorem 4.3]{FP03} for a discussion of how the Gram-Schmidt algorithm extends to the quaternions.

\begin{remark}
	Note that the \emph{left} orthonormal polynomials are \emph{right}-slice hyperholomorphic, and vice versa.
\end{remark}

Under this regime, there exists a strong relationship between orthonormal polynomials in the quaternionic, \(q\)-positive measure setting and the matrix-valued measure setting.

\begin{theorem}
    Let \(i,j\in\SS\) be orthogonal and let \(M_i\) be a \(2\times2\) matrix-valued measure on \(\TT_i\), such that the moments \((C_n)_{n=-\infty}^{\infty}\) of \(M_i\) lie in the image of \(\x_i\). Let \((\p_n^L)_{n=0}^{\infty}\) and \((\p_n^R)_{n=0}^{\infty}\) be the matrix-valued orthonormal polynomials with respect to \(M_i\). If \(c = (\x_i^{-1}(C_n))_{n=-\infty}^{\infty}\) has quaternionic orthonormal polynomials \((\q_n^L)_{n=0}^{\infty}\) and \((\q_n^R)_{n=0}^{\infty}\), then for all \(n\),
    \[
        \p_n^L = \P_i^R(\q_n^L) \quad \text{ and } \quad \p_n^R = \P_i^L(\q_n^R).
    \]
\end{theorem}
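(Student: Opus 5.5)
The plan is to show that \(\P_i^R\) and \(\P_i^L\) carry the quaternionic inner products to the matrix-valued ones, and then invoke uniqueness of the Gram--Schmidt output.

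\textbf{Step 1: transferring the inner products.} I would first verify that for right-slice hyperholomorphic \(\p,\q \in \HH[p]^R\),
\[
\x_i\big(\langle \p,\q\rangle_L\big) = \langle \P_i^R(\p), \P_i^R(\q)\rangle_L \quad (\text{matrix form w.r.t. } M_i).
\]
For \(\P_i^L\), \(\langle\cdot,\cdot\rangle_R\) and left-slice hyperholomorphic polynomials the analogous identity should hold. To see this, expand the matrix inner product in terms of coefficients and moments. For \(f = \sum_k A_k z^k\) and \(g = \sum_\ell B_\ell z^\ell\),
\[
\langle f,g\rangle_L = \sum_{k,\ell} A_k\, C_{\pm(k-\ell)}\, B_\ell^*,
\]
with the sign convention of the moments fixed as in \cite{DPS08}. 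On the quaternionic side, \(\langle \p,\q\rangle_L = \sum_{k,\ell} \p_k\, c_{\ell-k}\, \overline{\q_\ell}\) by the Toeplitz definition. Since \(\x_i\) is a \(*\)-homomorphism and \(C_n = \x_i(c_n)\), the two expressions match termwise. The only point to check is that the index convention for \(C_n\) versus \(c_n\) agrees, and I expect this reduces to \(c_{-n} = \overline{c_n}\). The \(R\)-case is identical, with \(\hat\q^* T_N(c)\hat\p\) mapping to \(\sum_{\ell,k} B_\ell^* C_{k-\ell} A_k\).

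\textbf{Step 2: Gram--Schmidt on both sides.} The matrix orthonormal polynomials \(\p_n^L\) are characterised, as in \cite{DPS08}, by three conditions:
\begin{itemize}
\item \(\p_n^L\) has degree \(n\);
\item its leading coefficient is positive definite (the normalisation convention of \cite{DPS08});
\item \(\langle \p_n^L, z^m I\rangle_L = 0\) for \(m<n\), and \(\langle \p_n^L,\p_n^L\rangle_L = I\).
\end{itemize}
Now set \(\Phi := \P_i^R(\q_n^L)\). By Step 1 and the quaternionic orthonormality of \(\q_n^L\), we get \(\langle \Phi, z^m I\rangle_L = \x_i(\langle \q_n^L, p^m\rangle_L) = 0\) for \(m<n\), since \(\P_i^R(p^m) = z^m I\). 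Similarly, \(\langle\Phi,\Phi\rangle_L = \x_i(1) = I\).

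The leading coefficient of \(\Phi\) is \(\x_i\) of the quaternionic leading coefficient. If Gram--Schmidt in \cite[Theorem 4.3]{FP03} normalises that coefficient to be positive real, then \(\x_i\) of it is a positive scalar matrix. Uniqueness of the matrix orthonormal polynomials then gives \(\p_n^L = \Phi\). The right-hand case is symmetric.

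\textbf{Main obstacle.} I expect the main obstacle to be matching normalisation and index conventions. First, the matrix orthonormal polynomials in \cite{DPS08} are defined by Gram--Schmidt against \(\{z^m I\}\) with a specific leading-coefficient normalisation. That normalisation must be compatible with the quaternionic one, or else one must argue that both are determined only up to a unitary and that the choices agree. Second, the sign convention for \(C_n\) (with \(e^{in\th}\) versus \(e^{-in\th}\)) must be reconciled with \(T_N(c)\).

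If the normalisations differ, I would use a fallback. Show directly that both polynomial sequences arise from the same recursive Gram--Schmidt formulas, since \(\x_i\) commutes with the square roots and inverses of positive elements in its image, because that image is a closed \(*\)-subalgebra. Then argue by induction on \(n\).
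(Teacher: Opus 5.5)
The one step that fails as written is the left half of Step~1, and your proposed fix for it does not work.

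With the Toeplitz definition,
\[
\chi_i(\langle\varphi,\psi\rangle_L)=\sum_{k,\ell}\chi_i(\varphi_k)\,C_{\ell-k}\,\chi_i(\psi_\ell)^*,
\]
whereas, for $C_n=\int e^{in\theta}\,\mathrm{d}M_i(\theta)$,
\[
\int f\,\mathrm{d}M_i\,g^*=\sum_{k,\ell}A_k\,C_{k-\ell}\,B_\ell^*.
\]
That convention for $C_n$ is the one you need, and the one the paper uses, for the right-hand case to match $\hat\psi^*T_N(c)\hat\varphi$.

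The symmetry $c_{-n}=\overline{c_n}$ only turns $C_{\ell-k}$ into $C_{k-\ell}^*$, not into $C_{k-\ell}$. The two sums are the left forms of $M_i$ and of its reflection under $\theta\mapsto-\theta$, and these genuinely differ. Flipping the sign convention for the moments merely moves the mismatch to the right-hand case.

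In fact, with the literal Toeplitz definition the left identity is already false for $n=1$ (take $c_0=1$). Orthogonality to $1$ forces $\psi_1^L(p)=\beta p-\beta\overline{c_1}$, so $\Phi_i^R(\psi_1^L)=\chi_i(\beta)(z-C_1^*)$, while $\varphi_1^L=A_1(z-C_1)$. These agree only if $c_1\in\mathbb{R}$.

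The remedy is to read $\langle\varphi,\psi\rangle_L$ as $\sum_{k,\ell}\varphi_k\,c_{k-\ell}\,\overline{\psi_\ell}$. This is what the integral formula for $\langle\cdot,\cdot\rangle_L$ stated in the paper actually evaluates to. It is also exactly the expression the paper's own proof arrives at and identifies with $\langle\cdot,\cdot\rangle_L$. With that reading and the single convention $C_n=\int e^{in\theta}\,\mathrm{d}M_i$, both transfers follow termwise from $\chi_i$ being a $*$-homomorphism.

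With this correction your argument is sound, and it runs in the opposite direction to the paper's. The paper first shows that the coefficients of $\varphi_n^L,\varphi_n^R$ lie in $\mathrm{im}\,\chi_i$. It gets this from Theorem~\ref{thm:MatrixVerForm} (which puts the matricial Verblunsky coefficients in $\mathrm{im}\,\chi_i$) together with the matrix Szeg\H{o} recurrences. It then pulls the matrix polynomials back by $\chi_i^{-1}$, verifies quaternionic orthonormality, and appeals to uniqueness of quaternionic Gram--Schmidt.

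You push the quaternionic polynomials forward instead. Membership in $\mathrm{im}\,\chi_i$ is then automatic, and that preliminary step disappears. The price is the leading-coefficient check on the matrix side, which the paper's appeal to uniqueness also leaves implicit.

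Your resolution of that check is right. The Hermitian elements of $\mathrm{im}\,\chi_i$ are exactly the real multiples of $I_2$, so a positive real quaternionic leading coefficient maps to a positive definite one. Any two degree-$n$ matrix polynomials that are left-orthogonal to lower degrees and left-normalised differ by a unitary left factor. That factor must be $I_2$ if both leading coefficients are positive definite.

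Just make sure positive definiteness of the leading coefficient is the normalisation actually in force. The matrix polynomials might instead be normalised so that the Szeg\H{o} recursion holds with the positive defects $\rho_n^L,\rho_n^R$, making the leading coefficient a product of inverse defects. In that case you additionally need $\rho_n^L=\rho_n^R=\sqrt{1-\lvert\gamma_n\rvert^2}\,I_2$, i.e.\ the Verblunsky coefficients in $\mathrm{im}\,\chi_i$. That is precisely what the paper's preliminary step supplies.
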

\begin{proof}
    As discussed in the previous section, it follows from \Cref{thm:MatrixVerForm} that since \((C_n)_{n=-\infty}^{\infty}\) lies in the image of \(\x_i\), so too must the Verblunsky coefficients \((\a_n)_{n=0}^{\infty}\) of \(M_i\). From there, a simple induction and the matrix Szeg\H{o} recurrences \cite[Theorem 3.3]{DPS08} show that so too do all the coefficients of \(\p_n^L\) and \(\p_n^R\) for all \(n\). 
    
    Let \(\p_n^R\) have \(k^{\mathrm{th}}\) coefficient \(\p_{n,k}^R\), define quaternions \(\pi_{n,k}^R = \x_i^{-1}(\p_{n,k}^R)\) and define a sequence of quaternionic polynomials via
    \[
        \pi_n^R(p) = \sum_{k=0}^n p^k \pi_{n,k}^R \in \HH[p]^L
    \]
    so that \(\p_n^R = \P_i^L(\pi_n^R)\).
    
    Orthonormality of \((\p_n^R)_{n=0}^{\infty}\) implies that, for any \(n,m \in \NN\),
    \begin{align*}
        \d_{nm} I_2 & = \langle \p_n^R, \p_m^R \rangle_R \\
        & = \int_0^{2\pi} \left( \sum_{\ell = 0}^m \p_{m,\ell}^R e^{i\ell\th} \right)^* \, \rmd M_i(\th) \left(\sum_{k=0}^n \p_{n,k}^R e^{ik\th} \right) \\
        & = \sum_{\ell = 0}^m \sum_{k=0}^n (\p_{m,\ell}^R)^* \left( \int_0^{2\pi} e^{i(k-\ell)\th} \, \rmd M_i(\th) \right) \p_{n,k}^R \\
        & = \sum_{\ell = 0}^m \sum_{k=0}^n \x_i(\overline{\pi_{m,\ell}^R}) \left( \int_0^{2\pi} e^{i(k-\ell)\th} \, \rmd M_i(\th) \right) \x_i(\pi_{n,k}^R) \\
        & = \x_i\left( \sum_{\ell=0}^{m} \sum_{k=0}^n \overline{\pi_{m,\ell}^R} \cdot c_{k-\ell} \cdot \pi_{n,k}^R\right)
    \end{align*}
    and hence
    \[
        \d_{nm} = \sum_{\ell=0}^{m} \sum_{k=0}^n \overline{\q_{m,\ell}^R} \cdot c_{k-\ell} \cdot \q_{n,k}^R.
    \]
    Observe that the right hand side here is precisely \(\langle \q_n^R, \q_m^R \rangle_R\), so that the \(\HH[p]^L\)-polynomials \((\pi_n^R)_{n=0}^{\infty}\) are orthonormal with respect to \(\langle \cdot, \cdot \rangle_R\). It remains to note that the output of the Gram-Schmidt algorithm is unique, i.e. there is only one such sequence --- namely, \((\q_n^R)_{n=0}^{\infty}\), and so
    \[
        \p_n^R = \P_i^L(\pi_n^R) = \P_i^L(\q_n^R).
    \]
    
    Very similarly, if we let \(\p_n^L\) have \(k^{\mathrm{th}}\) coefficient \(\p_{n,k}^L\) and write \(\p_{n,k}^L = \x_i(\pi_{n,k}^L)\) then orthonormality of \((\p_n^L)\) implies that
    \[
        \d_{nm} = \sum_{\ell=0}^m \sum_{k=0}^n \pi_{n,k}^L \cdot c_{k-\ell} \cdot \overline{\pi_{m,\ell}^L}
    \]
    and it follows that the \(\HH[p]^R\)-polynomials
    \[
        \pi_n^L(p) := \sum_{k=0}^n \pi_{n,k}^L p^k,
    \]
    i.e. such that \(\p_n^L = \P_i^R(\pi_n^L)\), are orthonormal with respect to \(\langle \cdot, \cdot \rangle_L\). Again, by uniqueness of the output of the Gram-Schmidt algorithm, there is only one such sequence: in this case, \((\q_n^L)_{n=0}^{\infty}\), and it follows that
    \[
        \p_n^L = \P_i^R(\pi_n^L) = \P_i^R(\q_n^L).
    \]
\end{proof}

This connection between orthonormal polynomials in the quaternionic and matricial settings will prove to be highly exploitable throughout the remainder of the paper.

\section{Quaternionic Szeg\H{o} Recurrences}

In this section we use the previous discussion to derive quaternionic analogues of the Szeg\H{o} recurrences for both the left and right orthonormal polynomials.

Recall that the reverse polynomial of a matrix-valued polynomial \(P(z) = \sum_k P_k z^k \in \CC_i^{2\times2}[z]\) is given by
\begin{align*}
    P^{\#}(z) & := z^n P(1/\overline{z})^* \\
    & = z^n (\sum_k P_k^* \cdot 1/z^k) \\
    & = \sum_k P_k^* z^{n-k}.
\end{align*}

Abstracting this to the quaternionic setting becomes delicate due to the noncommutativity between the coefficients and the variable. We shall make use of the following definition.

\begin{definition}
	Let \(\p(p) = \sum_k p^k \p_k \in \HH[p]^L\) and \(\q(p) = \sum_k \q_k p^k \in \HH[p]^R\) have degree \(n\) and \(m\) respectively; we define the reverse polynomial of \(\p\) to be
	\[
	    \p^{\#}(p) = \overline{\p(1/\overline{p})} \cdot p^n \in \HH[p]^R
	\]
	and the reverse of \(\q\) to be
	\[
	    \q^{\#}(p) = p^m \overline{\q(1/\overline{p})} \in \HH[p]^L.
	\]
\end{definition}

Intuitively, we are appealing to a careful extension of the usual formula to define a correspondence between \(\HH[p]^L\) and \(\HH[p]^R\). This definition allows us to obtain the following result.

\begin{theorem}
	Let \(c = (c_n)_{n=-\infty}^{\infty}\) be a non-trivial positive definite sequence with \(c_0 = 1\), let \(i \in \SS\) and let \(\mu_i\) be the \(q\)-positive measure on \(\TT_i\) with moments \(c\). Let \(M_i\) be the \(\CC_i^{2\times2}\)-valued measure with moments \((\x_i(c_n))_{n=-\infty}^{\infty}\). If the orthonormal polynomials of \(\mu_i\) are \((\q_n^L)_n\) and \((\q_n^R)_n\) and the matrix orthonormal polynomials of \(M_i\) are \((\p_n^L)_n\) and \((\p_n^R)_n\), then for all \(n \in \NN\),
	\[
		\p_n^{L,\#} = \P_i^L(\q_n^{R,\#}) \quad \text{ and } \quad \p_n^{R,\#} = \P_i^R(\q_n^{R,\#}).
	\]
\end{theorem}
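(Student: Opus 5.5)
Both identities can be reduced to a coefficient computation. First write out the coefficients of \(\q_n^{R,\#}\); the second identity then follows immediately. The first identity then becomes the coincidence \(\p_n^L = \p_n^R\), which I will try to prove by induction on the matrix Szeg\H{o} recurrences. The maps \(\P_i^L\) and \(\P_i^R\) are given by the same coefficientwise formula (apply \(\x_i\) to each coefficient and place it against \(z^k\)). So for the polynomial \(\q_n^{R,\#} \in \HH[p]^R\) I will read \(\P_i^L(\q_n^{R,\#})\) through that formula, which makes it the same polynomial as \(\P_i^R(\q_n^{R,\#})\).

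\emph{Step 1 (coefficients of the reverse).} Write \(\q_n^R(p) = \sum_{k=0}^n p^k \q_{n,k}^R\). Since \(\overline{p^k a} = \bar a\, \bar p^k\) and \(\overline{1/\bar p} = 1/p\), the definition gives
\[
\q_n^{R,\#}(p) = \overline{\q_n^R(1/\bar p)}\, p^n = \sum_{k=0}^n \overline{\q_{n,k}^R}\, p^{n-k} = \sum_{m=0}^n \overline{\q_{n,n-m}^R}\, p^m .
\]
Hence the coefficient of \(z^m\) in \(\P_i^R(\q_n^{R,\#})\) is \(\x_i(\overline{\q_{n,n-m}^R}) = \x_i(\q_{n,n-m}^R)^*\), because \(\x_i\) is a \(*\)-homomorphism.

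\emph{Step 2 (second identity).} By the preceding theorem, \(\p_n^R = \P_i^L(\q_n^R)\), so the \(k\)-th coefficient of \(\p_n^R\) is \(\x_i(\q_{n,k}^R)\). The matrix reverse formula \(P^{\#}(z) = \sum_k P_k^* z^{n-k}\) then shows that the coefficient of \(z^m\) in \(\p_n^{R,\#}\) is \(\x_i(\q_{n,n-m}^R)^*\). This agrees with Step 1, giving \(\p_n^{R,\#} = \P_i^R(\q_n^{R,\#})\).

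\emph{Step 3 (first identity).} By Steps 1–2, \(\P_i^L(\q_n^{R,\#}) = \p_n^{R,\#}\), so it remains to show \(\p_n^{L,\#} = \p_n^{R,\#}\). Since \(\#\) is injective, this is equivalent to \(\p_n^L = \p_n^R\), i.e. \(\P_i^R(\q_n^L) = \P_i^L(\q_n^R)\), i.e. \(\q_n^L\) and \(\q_n^R\) have the same coefficients. I will argue by induction with the matrix Szeg\H{o} recurrences \cite[Theorem 3.3]{DPS08}. By \Cref{lem:ImageOfXi} the \(\a_n\) lie in the image of \(\x_i\), so each \(\a_n\) is a scalar multiple of a unitary and \(\r_n^L = \r_n^R =: \r_n\) is a real scalar multiple of \(I_2\), hence central. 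The base case is \(\p_0^L = \p_0^R = I_2\), since \(C_0 = I_2\). Assuming \(\p_n^L = \p_n^R\), the recurrences read, schematically,
\[
\p_{n+1}^L = \r_n^{-1}\bigl(z\p_n^L - \a_n^*\p_n^{R,\#}\bigr), \qquad \p_{n+1}^R = \bigl(z\p_n^R - \p_n^{L,\#}\a_n^*\bigr)\r_n^{-1}.
\]
These coincide exactly when \(\a_n^*\) commutes with every coefficient of \(\p_n^{\#}\).

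\emph{Main obstacle.} This commutation is the step I expect to be hardest; it does not hold for general quaternions. My first approach is to prove, alongside the induction, that every coefficient of \(\p_n\) lies in the real subalgebra generated by \(\a_0,\dots,\a_{n-1}\) and their adjoints. I would then try to use the symmetry \(c_{-n} = \overline{c_n}\) of the Toeplitz matrix to show that \(\langle\cdot,\cdot\rangle_L\) and \(\langle\cdot,\cdot\rangle_R\) produce the same Gram–Schmidt coefficients. Concretely, I would compare \(\hat\p^{\intercal}T_n(c)\overline{\hat\q}\) with \(\hat\q^{*}T_n(c)\hat\p\) entrywise and look for an identification of the two forms. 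If that identification fails, I would check the case \(n = 1\) explicitly, where \(\q_1^L\) and \(\q_1^R\) involve \(c_1\) multiplied on opposite sides. This check decides whether the first identity holds for all \(c\) or only when the relevant coefficients commute.
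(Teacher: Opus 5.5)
\emph{Verdict.} Your Steps 1 and 2 are correct, and they match the paper's computation for the second identity. The gap is in Step 3. Reading \(\P_i^L(\q_n^{R,\#})\) coefficientwise, the first identity is equivalent, as you note, to \(\p_n^L=\p_n^R\). That claim is false in general, so neither the induction nor the \(n=1\) check can close the proof.

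\emph{Why \(\p_n^L=\p_n^R\) fails.} Put \(C_n=\x_i(c_n)=\int_0^{2\pi}e^{in\th}\,\rmd M_i(\th)\) and consider the monic degree-two polynomials \(P^R(z)=z^2I_2+zA_1+A_0\) and \(P^L(z)=z^2I_2+B_1z+B_0\). Orthogonality gives \(C_2+C_1A_1+A_0=0=C_1+A_1+C_1^*A_0\) and \(C_2+B_1C_1+B_0=0=C_1+B_1+B_0C_1^*\). Eliminating \(A_0,B_0\) and using \(C_1^*C_1=C_1C_1^*=\lvert c_1\rvert^2I_2\) gives
\[
(1-\lvert c_1\rvert^2)(A_1-B_1)=C_1^*C_2-C_2C_1^*=\x_i\big(\overline{c_1}c_2-c_2\overline{c_1}\big).
\]
Here the normalisations are positive real scalars, so \(\p_2^L=\p_2^R\) would force \(A_1=B_1\). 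That fails whenever \(c_1\) and \(c_2\) do not commute. For example, take \(c_0=1\), \(c_1=i/5\), \(c_2=j/5\), \(c_{-n}=\overline{c_n}\), and \(c_n=0\) for \(\lvert n\rvert\geq 3\). Every \(T_N(c)\) is positive definite since \(\sum_{m\neq0}\lvert c_m\rvert=4/5<1\), and \(\overline{c_1}c_2-c_2\overline{c_1}=-\tfrac{2}{25}ij\neq0\). This is exactly your commutation obstruction: \(\a_1^*\) would have to commute with the coefficient \(C_1^*\) of \(\p_1^{\#}\). Your proposed \(n=1\) test would not detect it, because \(\p_1^L=\p_1^R=(1-\lvert c_1\rvert^2)^{-1/2}(zI_2-C_1)\); the discrepancy first appears at \(n=2\).

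\emph{The missing observation.} The printed first identity is a typo, and the types already signal this: \(\P_i^L\) is defined only on \(\HH[p]^L\), whereas \(\q_n^{R,\#}\in\HH[p]^R\). The identity the paper actually proves is \(\p_n^{L,\#}=\P_i^L(\q_n^{L,\#})\). It follows from the same coefficient computation as your Steps 1--2:
\begin{itemize}
\item Write \(\q_n^L(p)=\sum_k \q_{n,k}^L p^k\in\HH[p]^R\). Then \(\q_n^{L,\#}(p)=p^n\overline{\q_n^L(1/\overline{p})}=\sum_k p^{n-k}\,\overline{\q_{n,k}^L}\in\HH[p]^L\).
\item By the preceding theorem, \(\p_n^L=\P_i^R(\q_n^L)\). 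Hence \(\p_n^{L,\#}(z)=\sum_k \x_i(\q_{n,k}^L)^* z^{n-k}=\P_i^L(\q_n^{L,\#})(z)\).
\end{itemize}
No Szeg\H{o} recurrence or commutation argument is needed.
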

\begin{proof}
	Both claims may be verified by direct computation. Firstly,

	\begin{align*}
	    \q_n^{L,\#}(p) & = p^n \overline{\q_n^L(1/\overline{p})} \\
	    & = p^n\left(\sum_{k=0}^n \frac{1}{p^k} \cdot \overline{\q_{n,k}^L}\right) \\
	    & = \sum_{k=0}^n p^{n-k} \overline{\q_{n,k}^L} \\
	    & = (\P_i^L)^{-1}(\p_n^{L,\#})(p),
	\end{align*}

	and secondly,
	
	\begin{align*}
	    \q_n^{R,\#}(p) & = \overline{\q_n^R(1/\overline{p})} p^n \\
	    & = \left( \sum_{k=0}^n \overline{\q_{n,k}^R} \cdot \frac{1}{p^k} \right) p^n \\
	    & = \sum_{k=0}^{n} \overline{\q_{n,k}^R} p^{n-k} \\
	    & = (\P_i^R)^{-1}(\p_n^{R,\#})(p).
	\end{align*}
\end{proof}

We are now in a position to obtain the quaternionic Szeg\H{o} recurrences.

\begin{theorem}
\label{thm:QuaternionicSzegoRecs}
	Let \(\mu_i\) be a non-trivial \(q\)-positive measure with Verblunsky coefficients \((\g_n)_{n=0}^{\infty}\) and orthonormal polynomials \((\q_n^L)_n\) and \((\q_n^R)_n\). Defining \(r_n := \sqrt{1 - \lvert \g_n \rvert^2}\), we have
	\begin{align*}
	    & \q_n^L \cdot p - r_n \q_{n+1}^L = \overline{\g}_n \q_n^{R,\#} \\
	    & p \cdot \q_n^R - \q_{n+1}^R r_n = \q_n^{L,\#} \overline{\g}_n.
	\end{align*}
\end{theorem}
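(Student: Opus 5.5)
The approach is to transfer the matrix Szeg\H{o} recurrences of \cite[Theorem 3.3]{DPS08} to the quaternionic setting through the maps \(\P_i^L\) and \(\P_i^R\). We fix orthogonal \(i,j\in\SS\) and let \(M_i\) be the \(\CC_i^{2\times2}\)-valued measure with moments \(\x_i(c_n)\). By \Cref{subsec:ConsistencyVCs}, its Verblunsky coefficients are \(\a_n=\x_i(\g_n)\). Each \(\a_n\) lies in the image of \(\x_i\), and \(\x_i(\g)^*\x_i(\g)=\x_i(\lvert\g\rvert^2)=\lvert\g\rvert^2 I_2\). Hence
\[
\r_n^L=\r_n^R=\x_i(r_n)=r_nI_2,
\]
which is a real scalar matrix.

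In our conventions (linear in the first slot), the matrix Szeg\H{o} recurrences of \cite{DPS08} read
\begin{align*}
z\p_n^L(z)-\r_n^L\p_{n+1}^L(z)&=\a_n^*\p_n^{R,\#}(z),\\
z\p_n^R(z)-\p_{n+1}^R(z)\r_n^R&=\p_n^{L,\#}(z)\a_n^*.
\end{align*}
The first step is to check which side the \(\a_n^*\) and \(\r_n\) factors sit on. This has to be done after translating between \(\langle\cdot,\cdot\rangle\) and the \(\llangle\cdot,\cdot\rrangle\) forms of \cite{DPS08}, via the remark relating them. Since \(\r_n\) is scalar, its side is immaterial. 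The side of \(\a_n^*\) is dictated by the structure: in \(\p_n^L\) the coefficients multiply \(z^k\) on the left, because \(\p_n^L=\P_i^R(\q_n^L)\) with \(\q_n^L\in\HH[p]^R\).

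The second step is to identify every term via the earlier theorems. We use \(\p_n^L=\P_i^R(\q_n^L)\), \(\p_n^R=\P_i^L(\q_n^R)\), and the reversed polynomials \(\p_n^{R,\#}=\P_i^R(\q_n^{R,\#})\), \(\p_n^{L,\#}=\P_i^L(\q_n^{L,\#})\). Here we read the preceding theorem with the evident correction of labels visible in its proof.

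Next we compare coefficients of \(z^k\). The map \(\x_i\) is a \(*\)-homomorphism, so
\[
\x_i(\overline{\g_n})\,\x_i(\overline{\q_{n,k}^{R}})=\x_i(\overline{\g_n}\,\overline{\q_{n,k}^R}),
\]
and similarly for the other terms. Applying \(\x_i^{-1}\) coefficientwise to the first recurrence gives
\[
\sum_k \q_{n,k}^L\,p^{k+1}-r_n\sum_k\q_{n+1,k}^Lp^k=\overline{\g}_n\sum_k\overline{\q_{n,n-k}^R}\,p^k .
\]
This is exactly \(\q_n^L\cdot p-r_n\q_{n+1}^L=\overline\g_n\q_n^{R,\#}\) as polynomials in \(\HH[p]^R\). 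Note that \(\q_n^L\cdot p\) equals the \(\star\)-product with \(p\), because \(p\) commutes with its own powers. The second recurrence follows in the same way in \(\HH[p]^L\), giving \(p\cdot\q_n^R-\q_{n+1}^Rr_n=\q_n^{L,\#}\overline\g_n\).

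The main obstacle is the bookkeeping of sidedness and conjugation, on three counts:
\begin{itemize}
\item ensuring that the \cite{DPS08} recurrences, stated for the forms linear in the second entry, translate into ours with \(\a_n^*\) on the claimed side;
\item ensuring that the left/right labels of the reversed polynomials match, since the preceding theorem as printed has a typo;
\item confirming that the variable \(z\) corresponds to \(p\) placed on the correct side, so that coefficientwise inversion of \(\x_i\) is legitimate.
\end{itemize}
I would first verify the case \(n=0\) directly, where \(\q_0^L=\q_0^R=1\), \(\q_0^{\#}=1\) and \(\q_1\) is computed explicitly from \(c_1\), to fix the conventions before running the general argument.
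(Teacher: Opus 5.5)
Your proposal is correct and is essentially the paper's own proof. Like the paper, you pull the matrix Szeg\H{o} recurrences of \cite[Theorem 3.3]{DPS08} back through $(\Phi_i^R)^{-1}$ and $(\Phi_i^L)^{-1}$, using $\alpha_n=\chi_i(\gamma_n)$, the scalar defects $\rho_n=r_nI_2$, the correspondences $\varphi_n^L=\Phi_i^R(\psi_n^L)$, $\varphi_n^{R,\#}=\Phi_i^R(\psi_n^{R,\#})$ and their mirror images, and the same label-typo corrections you make.

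Your planned $n=0$ check is worth keeping. If $\langle\cdot,\cdot\rangle_L$ is read literally as $\hat f^{\intercal}T_N(c)\overline{\hat g}$, one finds $\psi_1^L\propto p-\overline{c_1}$ while $\psi_1^R\propto p-c_1$. The two recurrences would then force $\overline{\gamma_0}=\overline{c_1}$ and $\overline{\gamma_0}=c_1$ respectively. So the correspondence $\varphi_n^L=\Phi_i^R(\psi_n^L)$ that you (and the paper) rely on needs the form $\langle f,g\rangle_L=\sum_{k,\ell}f_k\,c_{k-\ell}\,\overline{g_\ell}$, which is the form the paper's proof of that correspondence actually uses.
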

\begin{proof}
	Let \(M_i\) be the matrix-valued measure whose moments are \(\x_i\) applied to those of \(\mu_i\). Let \((\a_n)_{n=0}^{\infty}\) be the Verblunsky coefficients of \(M_i\) with defects \(\r_n := (I - \a_n\a_n^*)^{\frac12} = (I - \a_n^* \a_n)^{\frac12}\), and let \((\p_n^L)_n\) and \((\p_n^R)_n\) be the matrix orthonormal polynomials of \(M_i\). Recall the matrix Szeg\H{o} recurrences of \cite{DPS08}:
	\begin{align*}
	    & z\p_n^L - \r_n \p_{n+1}^L = \a_n^* \q_n^{R,\#} \\
	    & z\p_n^R - \p_{n+1}^R \r_n = \p_n^{L,\#} \a_n^*.
	\end{align*}

	Observe firstly that in the first relation, all matricial constants are multiplied on the left, while in the second, all constants appear multiplied on the right, and recall from Section 3 that \(\x_i(\a_n) = \g_n\). We may thus apply \((\P_i^R)^{-1}\) to the first and \((\P_i^L)^{-1}\) to the second; doing so, we obtain the claimed relations.
\end{proof}

We may reverse both of the quaternionic Szeg\H{o} recurrences and rearrange to obtain a pair of recurrences for each of the left and right orthogonal polynomials, directly reminiscent of the classical Szeg\H{o} recurrences.

\begin{corollary}
\label{cor:PairedSzegoRecs}
	The quaternionic orthonormal polynomials obey the following paired recurrences: for the left polynomials,
	\[
	    \q_{n+1}^L(p) = r_n^{-1}(\q_n^L(p) \cdot p - \overline{\g_n} \cdot \q_n^{R,\#}(p)), \quad \q_{n+1}^{L,\#}(p) = r_n^{-1}(\q_n^{L,\#}(p) - p \cdot \q_n^R(p) \cdot \g_n),
	\]
	and for the right,
	\[
	    \q_{n+1}^R(p) = r_n^{-1}(p \cdot \q_n^R(p) - \q_n^{L,\#} \cdot \overline{\g_n}), \quad \q_{n+1}^{R,\#}(p) = r_n^{-1}(\q_n^{R,\#}(p) - \g_n \q_n^L(p)\cdot p).
	\]
\end{corollary}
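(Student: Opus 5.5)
The plan is to work entirely on the matrix side and then pull the identities back, as in the proof of \Cref{thm:QuaternionicSzegoRecs}. Fix orthogonal \(i,j\in\SS\) and let \(M_i\) be the \(\CC_i^{2\times 2}\)-valued measure with moments \((\x_i(c_n))_n\). Its Verblunsky coefficients are \(\a_n=\x_i(\g_n)\), by \Cref{subsec:ConsistencyVCs}. Since \(\x_i\) is a \(*\)-homomorphism, \(\a_n\a_n^*=\a_n^*\a_n=\lvert\g_n\rvert^2 I_2\). Hence \(\r_n^L=\r_n^R=r_nI_2=\x_i(r_n)\), and this is a real scalar matrix, so it commutes with everything.

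The first step is to obtain the pair of relations for \(\p_{n+1}^L\) and \(\p_{n+1}^{L,\#}\) from \cite[Theorem 3.3]{DPS08}, and likewise for the right polynomials. The matrix Szeg\H{o} recurrences there come in a forward/reversed pair:
\[
\r_n^L\p_{n+1}^L = z\p_n^L - \a_n^*\p_n^{R,\#}, \qquad \r_n^L\p_{n+1}^{L,\#} = \p_n^{L,\#} - z\p_n^R\a_n .
\]
Alternatively, the second relation can be derived from the first by applying \(\#\) at degree \(n+1\). That is a routine coefficientwise computation: \((zP)^{\#}=P^{\#}\) when \(P\) has degree \(n\) and \(zP\) is viewed at degree \(n+1\), and \((\a^*Q^{\#})^{\#}=zQ\a\) when \(Q^{\#}\) has degree \(n\) and is viewed at degree \(n+1\). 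Dividing by the scalar \(r_n\) gives matrix identities. In these, the coefficients of \(\p^L\) and \(\p^{R,\#}\) (respectively \(\p^R\) and \(\p^{L,\#}\)) sit consistently on one side of \(z\). All coefficients lie in the image of \(\x_i\), because the \(\a_n\) do and hence, inductively, so do the polynomial coefficients.

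The second step is to transport these identities back to \(\HH\), using the correspondence theorems already proved. These give \(\p_n^L=\P_i^R(\q_n^L)\) and \(\p_n^R=\P_i^L(\q_n^R)\), together with the analogous statements for the reverse polynomials. Note that the displayed reverse-polynomial theorem appears to contain a typo, and the computation in its proof really gives \(\p_n^{L,\#}=\P_i^L(\q_n^{L,\#})\) and \(\p_n^{R,\#}=\P_i^R(\q_n^{R,\#})\). The maps \((\P_i^{L})^{-1}\) and \((\P_i^{R})^{-1}\) are additive. Since \(\x_i\) is multiplicative, they turn left (respectively right) multiplication by \(\x_i(\g)\) into left (respectively right) multiplication by \(\g\), and they turn multiplication by \(z\) into multiplication by \(p\) on the appropriate side. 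For \(\HH[p]^R\), \(z\sum \p_kz^k\mapsto\sum\p_kp^{k+1}=\q(p)\cdot p\). For \(\HH[p]^L\), it maps to \(p\cdot\q(p)\). Also \(\x_i^{-1}(\a_n^*)=\overline{\g_n}\). Applying these maps termwise yields the four stated formulas, each after dividing by the real scalar \(r_n\). This is legitimate because \(r_n>0\) by non-triviality, since then \(\lvert\g_n\rvert<1\).

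The main obstacle is bookkeeping of sides. In each relation, every term must live in the same space, \(\HH[p]^R\) or \(\HH[p]^L\), so that a single \(\P_i^{L}\) or \(\P_i^{R}\) can be inverted. I also need to check that multiplying \(z\) by a matrix polynomial whose coefficients are on the left corresponds to right multiplication by \(p\), and symmetrically for the other side. A further point is that the matrix product of two polynomials corresponds to the \(\star\)-product, not the pointwise product. However, in these relations only products with constants and with \(z\) occur, and for these the \(\star\)-product and the pointwise product agree. I will verify each term's side assignment explicitly, reading the \(p\cdot\q_n^R(p)\cdot\g_n\) term as \(\sum \q_{n,k}^R p^{k+1}\g_n\) in \(\HH[p]^L\) form. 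The remaining steps are mechanical.
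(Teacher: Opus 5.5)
Your proposal is correct and is essentially the paper's argument. The paper rearranges \Cref{thm:QuaternionicSzegoRecs}, which is exactly the transport of the DPS08 recurrences through \((\P_i^R)^{-1}\) and \((\P_i^L)^{-1}\) that you redo, and gets the two \(\#\)-relations by applying the quaternionic reversal directly; you instead reverse on the matrix side and pull back via the reverse-polynomial correspondence (correctly read as \(\p_n^{L,\#}=\P_i^L(\q_n^{L,\#})\)), which is the same coefficient computation.

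There is one slip in the side bookkeeping you flagged as the main risk. In \(\HH[p]^L\) form, \(p\cdot\q_n^R(p)\cdot\g_n=\sum_k p^{k+1}\q_{n,k}^R\g_n\), with the coefficient to the right of \(p^{k+1}\), not \(\sum_k \q_{n,k}^R p^{k+1}\g_n\).
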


\section{A Zeros Theorem for Quaternionic Orthonormal Polynomials}

This section contains a discussion of the Zeros Theorem in the quaternionic setting. The classical (that is, complex function-theoretic) result known as the Zeros Theorem appears in \cite{Sim05a} as Theorem 1.7.1, and subsequently Simon provides six proofs showcasing different ideas in the theory of orthogonal polynomials. A matrix-valued analogue appears in \cite{DPS08} as Theorem 3.7, and we shall utilise this result in this section.

\begin{definition}
    The \emph{zero set} of a (left- or right-) slice hyperholomorphic polynomial \(\p\) is
    \[
        \cZ(\p) := \{q \in \HH : \p(q) = 0\}.
    \]
\end{definition}

\begin{definition}
    The \emph{determinantal zero set} of a complex polynomial \(\p\) with \(d \times d\) matrix coefficients is 
    \[
        \cZ_{\det}(\p) := \{z \in \CC_i : \p(0) = 0_{d\times d}\}.
    \]
\end{definition}

\begin{theorem}
    \label{thm:leftZeroSet}
    Let \(\Q(p) = p^n + \sum_{\ell = 0}^{n-1} p^\ell \Q_\ell\) be a monic, left-slice hyperholomorphic polynomial. Then for \(i \in \SS\), the slice of \(\cZ(\Q)\) lying in \(\CC_i\) is the determinantal zero set of the matrix-valued polynomial \(\P^L_i(\Q)\).
\end{theorem}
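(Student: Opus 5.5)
The plan is to compute $\det \P_i^L(\Q)(z)$ explicitly for $z \in \CC_i$ and compare its vanishing with the vanishing of $\Q(z)$. I read the definition of $\cZ_{\det}$ in the intended way, namely $\cZ_{\det}(\p) = \{z \in \CC_i : \det \p(z) = 0\}$.

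\textbf{Step 1: splitting $\Q$ on the slice.} Fix $j \in \SS$ orthogonal to $i$ and write $\Q_\ell = a_\ell + b_\ell j$ with $a_\ell, b_\ell \in \CC_i$. For $z \in \CC_i$ the powers $z^\ell$ lie in $\CC_i$ and the coefficients sit on the right, so
\[
    \Q(z) = A(z) + B(z)j, \qquad A(z) := z^n + \sum_{\ell=0}^{n-1} z^\ell a_\ell, \quad B(z) := \sum_{\ell=0}^{n-1} z^\ell b_\ell .
\]
Here $A$ and $B$ are $\CC_i$-valued polynomials. Consequently $\Q(z) = 0$ if and only if $A(z) = B(z) = 0$.

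\textbf{Step 2: the matrix polynomial and its determinant.} Applying $\x_i$ coefficientwise gives
\[
    \P_i^L(\Q)(z) = \begin{bmatrix} A(z) & B(z) \\ -B^\dagger(z) & A^\dagger(z) \end{bmatrix},
\]
where $A^\dagger$ and $B^\dagger$ denote the polynomials with conjugated coefficients. These satisfy $A^\dagger(z) = \overline{A(\overline z)}$ and $B^\dagger(z) = \overline{B(\overline z)}$. Hence
\[
    \det \P_i^L(\Q)(z) = A(z)\overline{A(\overline z)} + B(z)\overline{B(\overline z)} .
\]
This is the slice-wise form of the symmetrisation $\Q \star \Q^c$ restricted to $\CC_i$.

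\textbf{Step 3: the inclusion $\cZ(\Q) \cap \CC_i \subseteq \cZ_{\det}(\P_i^L(\Q))$.} If $\Q(z) = 0$, then Step 1 gives $A(z) = B(z) = 0$, and Step 2 gives $\det \P_i^L(\Q)(z) = 0$. Equivalently, the vector $(1,0)^{\intercal}$ and its companion give an explicit kernel vector, so this inclusion is immediate.

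\textbf{Step 4: the reverse inclusion, which is the main obstacle.} Suppose $\det \P_i^L(\Q)(z) = 0$ and pick a nonzero kernel vector $(u,v)^{\intercal} \in \CC_i^2$. Reading off the first row gives $A(z)u + B(z)v = 0$. I would then convert this into a quaternionic statement. Via $\x_i$, the kernel vector corresponds to a nonzero quaternion $q = u - \overline{v}j$ (up to the sign and position convention for $\x_i$) with $\Q(z)\,q$ vanishing on the relevant component. I would then try to use the standard structure of zeros of slice polynomials, that they are isolated or form full spheres $[z] = x + y\SS$, together with $\Q \star \Q^c(z) = 0$, to conclude that $\Q$ vanishes at a point of $\CC_i$.

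The difficulty is that Step 2 shows the determinant vanishes as soon as $\Q$ vanishes at $\overline{z}$, because a root of $\overline{A(\overline z)}$ and $\overline{B(\overline z)}$ is all that is needed. So this step only yields a zero of $\Q$ on $\{z, \overline z\}$. To finish, I would show that the assignment $z \mapsto \overline z$ fixes the slice $\CC_i$ setwise, and pair the determinantal zeros with zeros of $\Q$ in $\CC_i$ through the sphere $[z] \cap \CC_i = \{z, \overline z\}$. This is exactly the point where the argument needs care: a zero of $\Q$ at $\overline z$ does not by itself give $\Q(z) = 0$.

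I would first test the step on the linear case $\Q(p) = p - \l$ with $\l \in \CC_i \setminus \RR$. There $\P_i^L(\Q)(z) = \mathrm{diag}(z - \l, z - \overline{\l})$, so the determinantal zero set is $\{\l, \overline{\l}\}$ while $\cZ(\Q) \cap \CC_i = \{\l\}$. This test will show precisely in what sense, namely up to conjugation within the slice, the identification in the statement has to be understood for the converse to go through. The final write-up must state this identification explicitly.
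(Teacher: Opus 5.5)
Your Steps 1--3 are correct. They establish $\cZ(\Q)\cap\CC_i\subseteq\cZ_{\det}(\P_i^L(\Q))$.

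Your linear test is decisive, not merely cautionary. For $\Q(p)=p-\l$ with $\l\in\CC_i\setminus\RR$ you get $\cZ_{\det}(\P_i^L(\Q))=\{\l,\overline{\l}\}\neq\{\l\}=\cZ(\Q)\cap\CC_i$. So the reverse inclusion you want in Step~4 is false, no argument can supply it, and the statement as written cannot be proved.

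The paper's companion-matrix proof fails at exactly this point. From the fact that the eigenvectors $(1,\l,\ldots,\l^n)$ commute with $\l$, it concludes $\s_L(\cC_\Q)=\s_R(\cC_\Q)$, but this only gives $\s_L\subseteq\s_R$. Right spectra are unions of similarity classes: already for $n=1$, $\s_L([\l])=\{\l\}$ while $\s_R([\l])$ is the whole sphere of conjugates of $\l$. Hence $\s(\x_i(\cC_\Q))=\s_R(\cC_\Q)\cap\CC_i$ is in general strictly larger than $\cZ(\Q)\cap\CC_i$.

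Your proposed repair is also wrong. You suggest that a determinantal zero $z$ yields a zero of $\Q$ in $\{z,\overline z\}$, so that the identification holds up to conjugation within the slice. But zeros of $\Q$ off the slice register too. Take $\Q(p)=p-j$ with $j\in\SS$ orthogonal to $i$. Then $A(z)=z$, $B(z)=-1$ and $\det\P_i^L(\Q)(z)=z^2+1$, so $\cZ_{\det}(\P_i^L(\Q))=\{i,-i\}$. Yet $\Q(\pm i)=\pm i-j\neq 0$ and $\cZ(\Q)\cap\CC_i=\emptyset$.

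The right tool is the identity you already noted in Step~2: $\det\P_i^L(\Q)(z)=\Q^s(z)$, where $\Q^s=\Q\star\Q^c$ and $\Q^c(p)=\sum_\ell p^\ell\overline{\Q_\ell}$. This $\Q^s$ has real coefficients, so it vanishes at $z\in\CC_i$ iff it vanishes on the whole sphere $[z]=\{r^{-1}zr : r\in\HH\setminus\{0\}\}$. By a standard fact for slice polynomials, that happens iff $\Q$ has a zero somewhere on $[z]$, not necessarily in $\CC_i$. The correct statement is therefore
\[
\cZ_{\det}(\P_i^L(\Q))=\CC_i\cap\bigcup_{q\in\cZ(\Q)}[q].
\]
This set contains $\cZ(\Q)\cap\CC_i$, with equality precisely when $\Q$ has no non-real isolated zeros.

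For the paper's later use in the Zeros Theorem, the inclusion you proved in Step~3 is all that is needed.
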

\begin{proof}
    The companion matrix of \(\Q\) is, as discussed in \cite{SPV01},
    \[
        \cC_\Q := \begin{bmatrix} 0 & \cdots & 0 & -\Q_0 \\ 1 & \ddots & \vdots & -\Q_1 \\ & \ddots & 0 & \vdots \\ 0 & & 1 & -\Q_{n-1} \end{bmatrix}
    \]
    and (see Proposition 1 as well as the remark at the end of Section 5 of the same source) this matrix has the property that \(\cZ(\Q) = \s_L(\cC_\Q)\). Moreover, the eigenvectors of \(\cC_\Q\) take the form
    \[
        \begin{bmatrix} 1 \\ \l \\ \vdots \\ \l^n \end{bmatrix}, \quad \l \in \s_L(\cC_\Q);
    \]
    notice that any such vector commutes with its corresponding eigenvalue, so that \(\s_L(\cC_\Q) = \s_R(\cC_\Q)\).
    
    Now, by \Cref{lem:UnitaryEquiv}, there exists some unitary \(U\) so that 
    \[
        U^* \x_i(\cC_\Q) U = \begin{bmatrix} 0 & \cdots & 0 & -\x_i(\Q_0) \\ I_2 & \ddots & \vdots & -\x_i(\Q_1) \\ & \ddots & 0 & \vdots \\ 0 & & I_2 & -\x_i(\Q_{n-1}) \end{bmatrix},
    \]
    and the spectrum of this matrix is the determinantal zero set of the matrix-valued polynomial
    \[
        \P(z) = I_2 \cdot z^n + \sum_{\ell = 0}^{n-1} \x_i(\Q_\ell) z^\ell
    \]
    (see \cite{GLR09}). Using that for any \(i \in \SS\), \(\x_i(1) = I_2\), we have that
    \[
        \P^L_i(\Q)(z) = \P(z).
    \]
    On the other hand, the spectrum of \(\x_i(\cC_{\Q})\) is the slice of the right-spectrum of \(\cC_\Q\) lying in \(\CC_i\): this follows from the remark of Zhang \cite{Zha97} that if \(\l \in \CC_i\), the quaternionic right-eigenvalue equation
    \[
        A\xi = \xi \l
    \]
    is equivalent to the complex eigenvalue equation
    \[
        \x_i(A) \begin{bmatrix} \xi^{(1)} \\[6pt] - \overline{\xi^{(2)}}\end{bmatrix} = \l \begin{bmatrix} \xi^{(1)} \\[6pt] - \overline{\xi^{(2)}}\end{bmatrix},
    \]
    which has the consequence that for a quaternionic matrix \(A\),
    \[
        \l \in \CC_i \cap \s_R(A) \quad \text{ if and only if } \quad \l \in \s(\x_i(A)).
    \]
    Of course, we also have by multiplicativity of the determinant that \(\s(\x_i(\cC_\Q)) = \s(U \x_i(\cC_\Q) U^*)\).
    
    To complete the proof, simply string together our established equalities:
    \[
        \cZ(\Q) \cap \CC_i = \s_L(\cC_\Q) \cap \CC_i = \s_R(\cC_\Q) \cap \CC_i = \s(\x_i(\cC_\Q)) = \s(U^* \x_i(\cC_\Q) U) = \cZ_{\det}(\P) = \cZ_{\det}(\P^L_i(\Q)).
    \]
\end{proof}

\begin{corollary}
    \label{cor:rightPolyZeros}
    Let \(c = (c_n)_{n=-\infty}^{\infty} \subseteq \HH\) be a non-trivial positive definite sequence with \(c_0 = 1\) and let the right-orthonormal polynomials associated to \(c\) be \((\q_n^R)_n\). Let \(M_i\) be the positive matrix-valued measure with moments \((\x_i(c_n))_{n=-\infty}^{\infty}\) and right-orthonormal polynomials \((\p_n^R)_n\). Then for any \(i \in \SS\),
    \[
        \cZ(\q_n^R) \cap \CC_i = \cZ_{\det}(\p_n^R),
    \]
    and hence
    \[
        \cZ(\q_n^R) = \bigcup_{i \in \SS} \cZ_{\det}(\p_n^R).
    \]
\end{corollary}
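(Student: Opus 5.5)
The plan is to reduce \Cref{cor:rightPolyZeros} to \Cref{thm:leftZeroSet} by making \(\q_n^R\) monic. The right-orthonormal polynomial \(\q_n^R\) lies in \(\HH[p]^L\) and has the form \(\q_n^R(p) = \sum_{k=0}^n p^k \q^R_{n,k}\). Its leading coefficient \(\q^R_{n,n}\) is a nonzero quaternion, because Gram--Schmidt with respect to the genuine inner product \(\langle\cdot,\cdot\rangle_R\) (non-triviality of \(c\)) produces a polynomial of exact degree \(n\). We set
\[
\Q(p) := \q_n^R(p)\,(\q^R_{n,n})^{-1} = p^n + \sum_{\ell=0}^{n-1} p^\ell \Q_\ell, \qquad \Q_\ell := \q^R_{n,\ell}(\q^R_{n,n})^{-1}.
\]
Since the constant \((\q^R_{n,n})^{-1}\) multiplies on the right, the pointwise value satisfies \(\Q(q) = \q_n^R(q)(\q^R_{n,n})^{-1}\). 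Hence \(\cZ(\Q) = \cZ(\q_n^R)\), and \(\Q\) is a monic left-slice hyperholomorphic polynomial.

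Next we apply \Cref{thm:leftZeroSet}, which gives \(\cZ(\q_n^R)\cap\CC_i = \cZ(\Q)\cap\CC_i = \cZ_{\det}(\P_i^L(\Q))\). Because \(\x_i\) is a homomorphism applied coefficientwise, \(\P_i^L(\Q)(z) = \P_i^L(\q_n^R)(z)\,\x_i(\q^R_{n,n})^{-1}\). The matrix \(\x_i(\q^R_{n,n})\) is invertible, since \(\det\x_i(q) = |q|^2 \neq 0\), so \(\det \P_i^L(\Q)(z)\) is a nonzero multiple of \(\det \P_i^L(\q_n^R)(z)\). The two polynomials therefore have the same determinantal zero set. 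Here the determinantal zero set is read as the set of \(z\) at which the determinant vanishes, which is the intended meaning of the displayed definition.

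It then remains to identify \(\P_i^L(\q_n^R)\) with \(\p_n^R\). This is exactly the earlier theorem relating quaternionic and matricial orthonormal polynomials, which applies because \(M_i\) has moments \(\x_i(c_n)\) lying in the image of \(\x_i\). That step gives the first displayed equality. For the second, every quaternion lies in some slice \(\CC_i\), so
\[
\cZ(\q_n^R) = \bigcup_{i\in\SS}\big(\cZ(\q_n^R)\cap\CC_i\big) = \bigcup_{i\in\SS}\cZ_{\det}(\p_n^R),
\]
where \(\p_n^R\) is understood as depending on \(i\) through \(M_i\). When \(i\) varies we also choose a \(j\) orthogonal to it, and the argument above is independent of that choice.

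The main obstacle I expect is the monic normalisation. One has to check that right-multiplying by the inverse leading coefficient commutes with both evaluation and \(\P_i^L\), and does not alter the determinantal zeros. Right multiplication is essential here: left multiplication would not commute with evaluation of a left-slice polynomial at a quaternion \(q\).
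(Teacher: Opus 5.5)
Your proposal is correct and follows the same route as the paper. Both arguments normalise \(\q_n^R\) to a monic left-slice polynomial with the same zero set, apply \Cref{thm:leftZeroSet}, identify the resulting matrix polynomial with \(\p_n^R\), and take the union over \(i \in \SS\). The differences are minor. You make the identification through the earlier correspondence \(\p_n^R = \P_i^L(\q_n^R)\) together with \(\det \x_i(\q_{n,n}^R) = \lvert \q_{n,n}^R \rvert^2 \neq 0\), whereas the paper appeals to uniqueness of monic right-orthogonal matrix polynomials for \(M_i\). Your right-hand normalisation also works for any nonzero leading coefficient, while the paper's left-hand normalisation \((\q_{n,n}^R)^{-1}\q_n^R\) is harmless only because Gram--Schmidt yields a positive real leading coefficient.
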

\begin{proof}
    Given \(i \in \SS\), let the leading coefficients of \(\q_n^R\) and \(\p_n^R\) be \(\q_{n,n}^R \in \HH\) and \(\p_{n,n}^R \in \CC_i^{2\times 2}\), respectively. Then \(\Q_n^R := (\q_{n,n}^R)^{-1}\q_n^R\) is a monic, left-slice hyperholomorphic polynomial, and similarly \(\P_n^R = (\p_{n,n}^R)^{-1}\p_n^R\) is a monic matrix-valued polynomial. Moreover, we have that
    \[
        \cZ(\q_n^R) = \cZ(\Q_n^R) \quad \text{ and } \quad \cZ_{\det}(\p_n^R) = \cZ_{\det}(\P_n^R).
    \]
    Now, the sequence of matrix-valued polynomials
    \[
        \P_i^L(\Q_n^R), \quad n \in \NN
    \]
    is by construction right-orthogonal with respect to \(M_i\). As is remarked in \cite{DPS08}, such sequences are unique, and hence \(\P_n^R = \P_i^L(\Q_n^R\). The first claim now follows from \Cref{thm:leftZeroSet}.

    To see the second claim, recall that every quaternion lies in \(\CC_i\) for some \(i \in \SS\) and take a union over \(i\) of the first claim.
\end{proof}

The corresponding result for monic, right-slice hyperholomorphic polynomials also holds, and accordingly we obtain a similar corollary for the left-orthonormal polynomials of a non-trivial positive definite sequence.

\begin{theorem}
    \label{thm:rightZeroSet}
    Let \(\Q(p) = p^n + \sum_{\ell = 0}^{n-1} \Q_\ell p^\ell\) be a monic, right-slice hyperholomorphic polynomial. Then for \(i \in \SS\), the slice of \(\cZ(\Q)\) lying in \(\CC_i\) is the determinantal zero set of the matrix-valued polynomial \(\P^R_i(\Q)\).
\end{theorem}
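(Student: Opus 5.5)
The plan is to mirror the proof of \Cref{thm:leftZeroSet}, replacing the left companion-matrix construction with one suited to coefficients on the left. The cleanest route is to reduce to the left-slice case by quaternionic conjugation. Given \(\Q(p) = p^n + \sum_{\ell=0}^{n-1} \Q_\ell p^\ell \in \HH[p]^R\), define the conjugate polynomial
\[
\Q^c(p) := p^n + \sum_{\ell=0}^{n-1} p^\ell \overline{\Q_\ell} \in \HH[p]^L .
\]
For \(z \in \CC_i\) we have \(\overline{\Q(z)} = \overline{z}^n + \sum_\ell \overline{z}^\ell\, \overline{\Q_\ell} = \Q^c(\overline{z})\). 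Hence \(z \in \cZ(\Q)\cap\CC_i\) if and only if \(\overline{z} \in \cZ(\Q^c)\cap\CC_i\). This uses only that \(z\) and \(\overline{z}\) lie in the same slice \(\CC_i\), so no slice-structure subtleties arise.

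Next, apply \Cref{thm:leftZeroSet} to \(\Q^c\). This gives \(\cZ(\Q^c)\cap\CC_i = \cZ_{\det}(\P_i^L(\Q^c))\), where
\[
\P_i^L(\Q^c)(z) = I_2 z^n + \sum_\ell \x_i(\Q_\ell)^* z^\ell ,
\]
using that \(\x_i\) is a \(*\)-homomorphism, so \(\x_i(\overline{q}) = \x_i(q)^*\). On the matricial side, the polynomial \(P(z) := \P_i^R(\Q)(z) = I_2 z^n + \sum_\ell \x_i(\Q_\ell) z^\ell\) satisfies
\[
P(z)^* = I_2 \overline{z}^n + \sum_\ell \x_i(\Q_\ell)^* \overline{z}^\ell = \P_i^L(\Q^c)(\overline{z}).
\]
Since \(\det(A^*) = \overline{\det A}\), we get \(\det P(z) = 0\) if and only if \(\det \P_i^L(\Q^c)(\overline{z}) = 0\). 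So \(z \in \cZ_{\det}(P)\) if and only if \(\overline{z} \in \cZ_{\det}(\P_i^L(\Q^c))\). Chaining the three equivalences gives \(\cZ(\Q)\cap\CC_i = \cZ_{\det}(\P_i^R(\Q))\).

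The main point requiring care is reading the determinantal zero set correctly: it must be \(\{z : \det P(z) = 0\}\), since the printed definition has a typo. Similarly, the conjugation identity \(\overline{\Q(z)} = \Q^c(\overline{z})\) must be checked carefully with the order of factors. It holds because \(\overline{ab} = \overline{b}\,\overline{a}\) turns \(\Q_\ell z^\ell\) into \(\overline{z}^\ell\,\overline{\Q_\ell}\).

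If one instead preferred a direct companion-matrix argument, the obstacle would be the analogue of the \cite{SPV01} identification of \(\cZ(\Q)\) with a spectrum when coefficients sit on the left. One would then use the transposed companion matrix and left eigenvectors \([1, \l, \ldots, \l^{n-1}]\), and it is less clear that the relevant results are available. The conjugation trick avoids this entirely, so it is the approach I would take.
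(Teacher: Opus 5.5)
Your conjugation reduction is correct, and it takes a different route from the paper, which reruns the companion-matrix proof of \Cref{thm:leftZeroSet} with the transposed companion matrix of \cite{SPV01}. The three facts you use all hold:
\begin{itemize}
\item $\overline{\Q(p)} = \Q^c(\overline{p})$ for every $p \in \HH$;
\item $\P_i^R(\Q)(z)^* = \P_i^L(\Q^c)(\overline{z})$, because $z$ acts as a complex scalar;
\item $\det(A^*) = \overline{\det A}$.
\end{itemize}
So you have validly shown that \Cref{thm:leftZeroSet} implies the statement.

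The genuine gap is the result you cite: \Cref{thm:leftZeroSet} is false, and so is the statement you are proving, whenever $\Q$ has a non-real zero. Test degree one with $\Q(p) = p - j$ and $i \perp j$. This polynomial is both left- and right-slice, so it tests both theorems at once. Here $\cZ(\Q) = \{j\}$, so $\cZ(\Q)\cap\CC_i = \emptyset$, whereas
\[
    \P_i^R(\Q)(z) = zI_2 - \x_i(j) = \begin{bmatrix} z & -1 \\ 1 & z \end{bmatrix}, \qquad \det \P_i^R(\Q)(z) = z^2 + 1,
\]
which vanishes at $z = \pm i$. Similarly, $\Q(p) = p - i$ has $\cZ(\Q)\cap\CC_i = \{i\}$, but $\det(zI_2 - \x_i(i)) = (z-i)(z+i)$ also vanishes at $-i$. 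Reading the misprinted definition as $\{z : \P_i^R(\Q)(z) = 0_{2\times 2}\}$ does not rescue the statement, since the second example then gives $\emptyset$.

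The step in the paper that fails is $\s_L(\cC_\Q) = \s_R(\cC_\Q)$. An eigenvector $[1, \l, \ldots, \l^{n-1}]^\intercal$ that commutes with $\l$ only gives $\s_L \subseteq \s_R$. The right spectrum is invariant under $\l \mapsto u^{-1}\l u$, but the zero set $\s_L$ is not.

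What is true is a spherical version. Write $\Q_\ell = a_\ell + b_\ell j$ with $a_\ell, b_\ell \in \CC_i$ (so $a_n = 1$, $b_n = 0$). A direct computation gives
\[
    \det \P_i^R(\Q)(z) = \sum_{m=0}^{2n} z^m \sum_{k+\ell=m} \big( a_k\overline{a_\ell} + b_k\overline{b_\ell} \big) = \sum_{m=0}^{2n} z^m \sum_{k+\ell=m} \Q_k \overline{\Q_\ell}.
\]
This is the symmetrization of $\Q$, a polynomial with real coefficients, evaluated at $z$. Its zero set is the union, over $q \in \cZ(\Q)$, of the $2$-spheres $[q] := \{u q u^{-1} : u \in \HH\setminus\{0\}\}$. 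Hence
\[
    \cZ_{\det}(\P_i^R(\Q)) = \{z \in \CC_i : [z]\cap\cZ(\Q) \neq \emptyset\},
\]
which is in general strictly larger than $\cZ(\Q)\cap\CC_i$.

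Your conjugation argument transports this corrected identity from the left-slice case verbatim, because $\cZ(\Q^c) = \overline{\cZ(\Q)}$ and $[\overline{z}] = [z]$. So the reduction is worth keeping, but what it can prove is the spherical identity, not the statement as written.
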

\begin{proof}
    The proof is essentially identical to that of \Cref{thm:leftZeroSet}. Referring again to \cite[Proposition 1]{SPV01}, the only modification that must be made is that the appropriate companion matrix is now
    \[
        \cC_\Q := \begin{bmatrix}
            0 & 1 & & 0 \\ \vdots & \ddots & \ddots & & \\ 0 & \cdots & 0 & 1 \\ -\Q_0 & -\Q_1 & \cdots & -\Q_{n-1}
        \end{bmatrix}.
    \]
\end{proof}

\begin{corollary}
    \label{cor:leftPolyZeros}
    Let \(c = (c_n)_{n=-\infty}^{\infty} \subseteq \HH\) be a non-trivial positive definite sequence with \(c_0 = 1\) and let the left-orthonormal polynomials associated to \(c\) be \((\q_n^L)_n\). Let \(M_i\) be the positive matrix-valued measure with moments \((\x_i(c_n))_{n=-\infty}^{\infty}\) and left-orthonormal polynomials \((\p_n^L)_n\). Then for any \(i \in \SS\),
    \[
        \cZ(\q_n^L) \cap \CC_i = \cZ_{\det}(\p_n^L),
    \]
    and hence
    \[
        \cZ(\q_n^L) = \bigcup_{i \in \SS} \cZ_{\det}(\p_n^L).
    \]
\end{corollary}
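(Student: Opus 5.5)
The plan is to mirror the proof of \Cref{cor:rightPolyZeros}, with left and right interchanged and \Cref{thm:rightZeroSet} used in place of \Cref{thm:leftZeroSet}.

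\textbf{Step 1: normalise to monic polynomials.} Fix \(i \in \SS\) and write the leading coefficients of \(\q_n^L \in \HH[p]^R\) and \(\p_n^L\) as \(\q_{n,n}^L \in \HH\) and \(\p_{n,n}^L \in \CC_i^{2\times 2}\). Both are invertible: \(\q_{n,n}^L\) is nonzero by Gram--Schmidt, and \(\p_{n,n}^L\) is invertible by non-triviality of \(M_i\) (see \cite{DPS08}). Since \(\q_n^L\) is right-slice hyperholomorphic, its coefficients sit on the left, so the leading coefficient must be stripped off from the right. We therefore set \(\Q_n^L := \q_n^L \cdot (\q_{n,n}^L)^{-1}\), which is coefficientwise \(\sum_k \q_{n,k}^L (\q_{n,n}^L)^{-1} p^k\). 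Similarly we set \(\P_n^L := \p_n^L (\p_{n,n}^L)^{-1}\).

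Right multiplication by a constant is harmless for the zero sets. For a right-slice hyperholomorphic \(\q(p)=\sum_k \q_k p^k\), evaluation is \(\q(q)=\sum_k \q_k q^k\), so \(\q(q)\,a = \sum_k \q_k q^k a\). This is not the same as \((\q a)(q)=\sum_k \q_k a q^k\) unless \(a\) commutes with \(q\). The zero set of \(\q_n^L\) is therefore not obviously equal to that of the coefficientwise product \(\Q_n^L\), and this is the main obstacle.

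To get around it, I would instead normalise on the left, taking \(\Q_n^L := (\q_{n,n}^L)^{-1}\q_n^L\), i.e. left-multiplying every coefficient. Then \(\Q_n^L(q) = (\q_{n,n}^L)^{-1}\q_n^L(q)\) holds pointwise, so \(\cZ(\Q_n^L)=\cZ(\q_n^L)\). The resulting polynomial is monic right-slice hyperholomorphic, which is exactly the form required by \Cref{thm:rightZeroSet}. On the matrix side, left multiplication by \((\p_{n,n}^L)^{-1}\) does not change \(\det\), so \(\cZ_{\det}(\P_n^L)=\cZ_{\det}(\p_n^L)\) with \(\P_n^L:=(\p_{n,n}^L)^{-1}\p_n^L\).

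\textbf{Step 2: identify \(\P_i^R(\Q_n^L)\) with \(\P_n^L\).} By the theorem relating the quaternionic and matricial orthonormal polynomials, \(\p_n^L = \P_i^R(\q_n^L)\). Since \(\x_i\) is multiplicative and \(\P_i^R\) acts coefficientwise, \(\P_i^R\) respects left multiplication by constants, giving
\[
\P_i^R(\Q_n^L) = \x_i\big((\q_{n,n}^L)^{-1}\big)\,\P_i^R(\q_n^L) = (\p_{n,n}^L)^{-1}\p_n^L = \P_n^L.
\]
Here we use \(\x_i(\q_{n,n}^L)=\p_{n,n}^L\). This route avoids any appeal to uniqueness of monic orthogonal polynomials, although that argument, as in \Cref{cor:rightPolyZeros}, would also work.

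\textbf{Step 3: conclude.} Applying \Cref{thm:rightZeroSet} to \(\Q_n^L\) gives
\[
\cZ(\q_n^L)\cap\CC_i=\cZ(\Q_n^L)\cap\CC_i=\cZ_{\det}(\P_i^R(\Q_n^L))=\cZ_{\det}(\P_n^L)=\cZ_{\det}(\p_n^L).
\]
Every quaternion lies in some \(\CC_i\), so taking the union over \(i\in\SS\) yields the second claim. Here \(\p_n^L\) is understood as depending on \(i\) through \(M_i\).
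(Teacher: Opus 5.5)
\textbf{There is a genuine gap.} Your route is the paper's route: normalise to a monic right-slice polynomial, identify \(\P_i^R\) of it with the monic matrix polynomial, and invoke \Cref{thm:rightZeroSet}. Left multiplication by \((\q_{n,n}^L)^{-1}\) is indeed the correct side, so your first attempt can simply be deleted. The gap is Step 3, and the paper's proof has the same defect. Step 3 rests on the \emph{equality} in \Cref{thm:rightZeroSet}, and that equality is false; so is the equality asserted in the corollary. Only ``\(\subseteq\)'' holds.

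To see why, write \(\Q(p)=\sum_k (a_k+b_kj)p^k\) with \(a_k,b_k\in\CC_i\) and \(i\perp j\), and put \(A(z)=\sum_k a_kz^k\), \(B(z)=\sum_k b_kz^k\). Since \(jz=\overline{z}j\) for \(z\in\CC_i\), one has \(\Q(z)=A(z)+B(\overline{z})j\). On the other hand, \(\det \P_i^R(\Q)(z)=A(z)\overline{A(\overline{z})}+B(z)\overline{B(\overline{z})}\). So \(\Q(z)=0\), i.e.\ \(A(z)=0=B(\overline{z})\), forces the determinant to vanish, but not conversely.

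Here is a concrete counterexample to the corollary. Take \(c_0=1\), \(c_{\pm1}=\pm\tfrac12 j\), and \(c_n=0\) for \(\lvert n\rvert\ge 2\). This sequence is non-trivial: by \Cref{lem:UnitaryEquiv}, \(\x_i(T_n(c))\) is unitarily equivalent to \(I+\tfrac12(S-S^{\intercal})\otimes\x_i(j)\), where \(S\) is the forward shift on \(\CC_i^{n+1}\), and this matrix has spectrum in \((0,2)\). Gram--Schmidt for \(\langle\cdot,\cdot\rangle_L\) gives \(\q_1^L(p)=\tfrac{2}{\sqrt{3}}(\tfrac12 j+p)\). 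Hence \(\cZ(\q_1^L)=\{-\tfrac12 j\}\) and \(\cZ(\q_1^L)\cap\CC_i=\emptyset\). But your \(\P_i^R(\Q_1^L)(z)=zI_2+\tfrac12\x_i(j)\) has determinant \(z^2+\tfrac14\). So does the monic left orthogonal polynomial \(zI_2-\x_i(c_1)\) of \(M_i\), computed directly from the matrix form \(\langle\cdot,\cdot\rangle_L\). Thus \(\cZ_{\det}(\p_1^L)=\{\pm\tfrac12 i\}\). Taking the union over \(i\in\SS\) yields the whole sphere \(\tfrac12\SS\neq\cZ(\q_1^L)\), so the second claim fails as well.

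The faulty step in the proof of \Cref{thm:leftZeroSet}, which \Cref{thm:rightZeroSet} copies, is the identification \(\s_L(\cC_\Q)=\s_R(\cC_\Q)\). The eigenvector argument only gives \(\s_L(\cC_\Q)\subseteq\s_R(\cC_\Q)\). Moreover, \(\s_R\) is invariant under \(\lambda\mapsto u^{-1}\lambda u\), while zero sets of slice polynomials are not. For example, for \(\Q(p)=p-j\) the companion matrix \(\cC_\Q=[j]\) has \(\s_L=\{j\}\) but \(\s_R=\SS\).

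What your argument does prove, once Step 3 is weakened to an inclusion, is \(\cZ(\q_n^L)\cap\CC_i\subseteq\cZ_{\det}(\p_n^L)\). That inclusion is all \Cref{thm:QuaternionicZerosThm} actually needs. The exact relationship comes from the same computation. For \(u=\alpha+\beta j\neq 0\) with \(\alpha,\beta\in\CC_i\), the condition \(\Q(uzu^{-1})=0\) is a \(2\times 2\) linear system in \((\alpha,\overline{\beta})\) whose determinant is \(\det\P_i^R(\Q)(z)\). Hence \(\cZ_{\det}(\P_i^R(\Q))=\{z\in\CC_i : uzu^{-1}\in\cZ(\Q) \text{ for some } u\in\HH\setminus\{0\}\}\). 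The union over \(i\) is therefore the union of the spheres \(\Re(q)+\lvert\Im(q)\rvert\,\SS\), \(q\in\cZ(\Q)\), not \(\cZ(\Q)\) itself.
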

\begin{proof}
    We proceed as in the proof of \Cref{cor:rightPolyZeros}, with \Cref{thm:rightZeroSet} in place of \Cref{thm:leftZeroSet}.
\end{proof}

\begin{remark}
    \label{rem:zeroSetsEqual}
    It now follows from \cite[Theorem 3.8]{DPS08} that
    \[
        \cZ(\q_n^R) = \bigcup_{i \in \SS} \cZ_{\det}(\p_n^R) = \bigcup_{i\in\SS} \cZ_{\det}(\p_n^L) = \cZ(\q_n^L),
    \]
    i.e. while the left and right orthonormal polynomials are different objects, their zero sets coincide.
\end{remark}    

We finally obtain from the above and \cite[Theorem 3.7]{DPS08} the quaternionic analogue of the Zeros Theorem: the zero sets of both the left- and right-orthonormal polynomials lie strictly inside the quaternionic unit ball, while the zero sets of their reverse polynomials lie strictly outside the closed unit ball.

\begin{theorem}
\label{thm:QuaternionicZerosThm}
    Let \(c \subseteq \HH\) be a non-trivial positive definite sequence with associated left- and right-orthonormal polynomials \((\p_n^L)_n\) and \((\p_n^R)_n\), and denote their respective reverse polynomials by \((\p_n^{L,\#})_n\) and \((\p_n^{R,\#})_n\). We have for \(n \in \NN\) that firstly,
    \[
        \cZ(\q_n^L) \subseteq \BB \quad \text{ and } \quad \cZ(\q_n^R) \subseteq \BB,
    \]
    and secondly,
    \[
        \cZ(\q_n^{L,\#}) \subseteq \HH \setminus \overline{\BB} \quad \text{ and } \quad \cZ(\q_n^{R,\#}) \subseteq \HH \setminus \overline{\BB}.
    \]
\end{theorem}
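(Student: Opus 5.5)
The plan is to transfer the matrix-valued Zeros Theorem of \cite[Theorem 3.7]{DPS08} to the quaternionic setting slice by slice, using \Cref{cor:rightPolyZeros} and \Cref{cor:leftPolyZeros}. (I read the statement's \(\p_n^{L}\), \(\p_n^R\) as the quaternionic \(\q_n^L\), \(\q_n^R\), and \(\BB\) in the first claim as the open ball.) Normalise \(c_0 = 1\), which rescales the orthonormal polynomials by a positive real and so leaves zero sets unchanged. Then fix \(i \in \SS\) with orthogonal \(j\), and let \(M_i\) be the positive \(\CC_i^{2\times 2}\)-valued measure with moments \((\x_i(c_n))_n\). Non-triviality of \(c\) gives positive definiteness of each \(T_n(c)\). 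By \Cref{lem:UnitaryEquiv}, \(\x_i(T_n(c))\) is unitarily equivalent to the block Toeplitz matrix of \(M_i\), so that matrix is positive definite and \(M_i\) is non-trivial.

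\textbf{Orthonormal polynomials.} \Cref{cor:rightPolyZeros} gives \(\cZ(\q_n^R) \cap \CC_i = \cZ_{\det}(\p_n^R)\), where \(\p_n^R\) is the matricial right-orthonormal polynomial. By \cite[Theorem 3.7]{DPS08}, every \(z\) with \(\det \p_n^R(z) = 0\) satisfies \(\lvert z \rvert < 1\). Hence \(\cZ(\q_n^R) \cap \CC_i \subseteq \DD_i\). Every quaternion lies in some \(\CC_i\), and \(\lvert\cdot\rvert\) on \(\CC_i\) agrees with the quaternionic modulus, so taking the union over \(i\) gives \(\cZ(\q_n^R)\) inside the open unit ball. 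The left case follows in the same way from \Cref{cor:leftPolyZeros}, or directly from \Cref{rem:zeroSetsEqual}.

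\textbf{Reverse polynomials.} For \(\q_n^{R,\#}\), write \(\q_n^{R,\#}(p) = \overline{\q_n^R(1/\overline{p})}\, p^n\) and treat \(p = 0\) separately. If \(p \neq 0\) and \(\q_n^{R,\#}(p) = 0\), then \(\q_n^R(1/\overline{p}) = 0\) because \(p^n\) is invertible. The point \(1/\overline{p} = p/\lvert p\rvert^2\) lies in the same slice \(\CC_i\) as \(p\), so the previous step gives \(\lvert 1/\overline{p}\rvert < 1\), that is, \(\lvert p \rvert > 1\). At \(p = 0\), \(\q_n^{R,\#}(0) = \overline{\q_{n,n}^R}\), which is nonzero since the leading coefficient from Gram--Schmidt is invertible. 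The same argument applies to \(\q_n^{L,\#}(p) = p^n \overline{\q_n^L(1/\overline{p})}\).

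\textbf{Main obstacle.} The delicate point is justifying the pointwise manipulations in the reverse-polynomial step. Evaluation of a slice polynomial at a point is not multiplicative in general, and the defining formula for \(\q^{\#}\) is given as a pointwise expression, not a \(\star\)-product. I will check that the pointwise formula coincides with the coefficient expression \(\sum_k \overline{\q_{n,k}^R} p^{n-k}\) from the proof of the preceding theorem. That check uses only that \(p\) commutes with its own powers and with \(1/\overline{p}\).

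If this check proves troublesome, the fallback is to argue within each slice through the matrix picture. The preceding theorem identifies \(\q_n^{R,\#}\) with \(\p_n^{R,\#}\) via \(\P_i^R\). The right-hand analogue of \Cref{thm:leftZeroSet} (that is, \Cref{thm:rightZeroSet}, after dividing by the invertible constant coefficient) then reduces the claim to \cite[Theorem 3.7]{DPS08} for \(\p_n^{R,\#}\), whose determinantal zeros lie outside the closed disc.
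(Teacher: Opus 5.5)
Your proposal is correct and is essentially the paper's proof: you transfer \cite[Theorem 3.7]{DPS08} slice by slice through \Cref{cor:rightPolyZeros} and \Cref{cor:leftPolyZeros}, then use $p \mapsto 1/\overline{p}$ for the reverse polynomials. Your additional checks (normalising $c_0$, non-triviality of $M_i$, the point $p=0$, and the pointwise identity $\overline{\q_n^R(1/\overline{p})}\,p^n=\sum_k \overline{\q_{n,k}^R}\,p^{n-k}$, which is immediate) fill in details the paper leaves implicit, so the fallback is unnecessary; note also that, as written, the fallback would have to normalise by the top nonzero coefficient rather than the constant one to obtain a monic polynomial.
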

\begin{proof}
    By \Cref{cor:rightPolyZeros} and \Cref{cor:leftPolyZeros}, \(\cZ(\q_n^L) \subseteq \bigcup_{i \in \SS} \cZ_{\det}(\p_n^L)\) and \(\cZ(\q_n^R) \subseteq \bigcup_{i \in \SS} \cZ_{\det}(\p_n^R)\). We may combine these facts with \cite[Theorem 3.7]{DPS08} and \Cref{rem:zeroSetsEqual} to see that
    \[
        \cZ(\q_n^R) = \cZ(\q_n^L) \subseteq \bigcup_{i\in\SS} \DD_i = \BB.
    \]

    The reverse formulae for left- and right-slice hyperholomorphic polynomials imply that \(\q_n^L(p) = 0\) if and only if \(\q_n^{L,\#}(1/\overline{p}) = 0\), and similarly for \(\q_n^R\); applying this to the first claim proves the second.
\end{proof}

\section{A Diagonal Christoffel--Darboux Formula}

This section applies the quaternionic Szeg\H{o} recurrences to obtain an analogue on the diagonal of the well-known Christoffel--Darboux formula.

\begin{theorem}
\label{thm:CDFDiagonal}
	Let \(c \subseteq \HH\) be a non-trivial positive definite sequence with orthonormal polynomials  \((\q_n^L)_n\) and \((\q_n^R)_n\). Then for \(p \in \HH\setminus\partial\BB\),
	\begin{align*}
		\sum_{\ell = 0}^n \lvert \q_\ell^L(p) \rvert^2 + \lvert \q_\ell^R(p) \rvert^2 & = \frac{\lvert \q_{n+1}^{L.\#}(p) \rvert^2 + \lvert \q_{n+1}^{R,\#}(p)\rvert^2 - \Big(\lvert \q_{n+1}^L(p) \rvert^2 + \lvert \q_{n+1}^R(p) \rvert^2\Big)}{1 - \lvert p \rvert^2} \\ 
		& = \frac{\lvert \q_n^{L,\#}(p) \rvert^2 + \lvert \q_n^{R,\#}(p) \rvert^2 - \lvert p \rvert^2 \Big(\lvert \q_n^L(p)\rvert^2 + \lvert \q_n^R(p) \rvert^2\Big)}{1 - \lvert p \rvert^2}.
	\end{align*}
\end{theorem}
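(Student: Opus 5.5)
The plan is to mimic the classical proof of the diagonal Christoffel--Darboux formula, which telescopes a one-step identity derived from the Szeg\H{o} recurrences. Set
\[
    D_n(p) := \lvert \q_n^{L,\#}(p)\rvert^2 + \lvert \q_n^{R,\#}(p)\rvert^2 - \lvert \q_n^L(p)\rvert^2 - \lvert \q_n^R(p)\rvert^2 .
\]
The key step is the one-step identity
\[
    D_{n+1}(p) = \lvert \q_n^{L,\#}(p)\rvert^2 + \lvert \q_n^{R,\#}(p)\rvert^2 - \lvert p\rvert^2\big(\lvert \q_n^L(p)\rvert^2 + \lvert \q_n^R(p)\rvert^2\big),
\]
whose right-hand side is exactly the numerator of the second expression in the statement. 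Subtracting $D_n(p)$ from both sides gives
\[
    D_{n+1}(p) - D_n(p) = (1 - \lvert p\rvert^2)\big(\lvert \q_n^L(p)\rvert^2 + \lvert \q_n^R(p)\rvert^2\big).
\]
Summing from $0$ to $n$ telescopes. With $\q_0^L = \q_0^R = \q_0^{L,\#} = \q_0^{R,\#} = 1$ (since $c_0=1$), we have $D_0 = 0$. Dividing by $1-\lvert p\rvert^2$, which is allowed off $\partial\BB$, gives the first equality, and the one-step identity gives the second.

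To prove the one-step identity I would use the paired recurrences of \Cref{cor:PairedSzegoRecs}. For fixed $p$, consider the quaternion-valued column vectors
\[
    u := \big(\q_n^L(p)\,p,\ \q_n^{R,\#}(p)\big) \quad\text{and}\quad v := \big(p\,\q_n^R(p),\ \q_n^{L,\#}(p)\big).
\]
By the corollary,
\[
    \big(\q_{n+1}^L(p),\ \q_{n+1}^{R,\#}(p)\big) = r_n^{-1}\begin{bmatrix} 1 & -\overline{\g_n} \\ -\g_n & 1 \end{bmatrix} u,
\]
with the scalars acting on the left. Similarly, $\big(\q_{n+1}^R(p), \q_{n+1}^{L,\#}(p)\big)$ is obtained from $v$ by right multiplication by the analogous matrix. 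The conjugations in that corollary need care: the formula $\q_{n+1}^{L,\#} = r_n^{-1}(\q_n^{L,\#} - p\,\q_n^R\,\g_n)$ has $\g_n$ on the right, which matches right action on $v$.

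The matrix $J = \mathrm{diag}(1,-1)$ is preserved in the sense that
\[
    G^* J G = r_n^2 J \quad\text{for } G = \begin{bmatrix} 1 & -\overline{\g_n} \\ -\g_n & 1\end{bmatrix},
\]
which is a direct check using $\lvert\g_n\rvert^2 = 1 - r_n^2$ and the fact that $\g_n$ and $\overline{\g_n}$ commute. Hence
\[
    \lvert \q_{n+1}^L\rvert^2 - \lvert \q_{n+1}^{R,\#}\rvert^2 = \lvert \q_n^L p\rvert^2 - \lvert \q_n^{R,\#}\rvert^2,
\]
and similarly for $v$ acting on the right. This uses that $\lvert x^* G y\rvert$-type sesquilinear expansions $\lvert a - \overline{\g} b\rvert^2 = \lvert a\rvert^2 - 2\Re(\overline{a}\,\overline{\g}\, b) + \lvert\g\rvert^2\lvert b\rvert^2$ hold in $\HH$, with the cross terms cancelling between the two components. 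Adding the two identities and using $\lvert \q_n^L(p)\,p\rvert = \lvert p\rvert\,\lvert \q_n^L(p)\rvert$ by multiplicativity of the quaternionic norm gives the one-step identity.

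The main obstacle is making sure the cross terms really cancel despite noncommutativity. In the $u$-system the cross terms are $-2\Re\big(\overline{a}\,\overline{\g}\,b\big)$ and $-2\Re\big(\overline{b}\,\g\, a\big)$, where $a = \q_n^L p$ and $b = \q_n^{R,\#}$. These are equal because $\Re(\overline{x}) = \Re(x)$, so they cancel upon subtraction. I would verify the analogous statement for right multiplication in the $v$-system explicitly.

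A secondary concern is that the recurrences in \Cref{cor:PairedSzegoRecs} are polynomial identities, while evaluation at $p$ of products like $\q_n^L(p)\cdot p$ is pointwise. I would check that the corollary is an identity of evaluated functions, which it is, since multiplication by $p$ on the appropriate side of a slice polynomial commutes with evaluation.
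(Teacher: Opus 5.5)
Your proposal is correct. It has the same skeleton as the paper's proof: derive a one-step identity from \Cref{cor:PairedSzegoRecs}, then telescope from $n=0$, where your $D_0=0$ plays the role of the paper's $Q_0=K_0=2$. The difference is the grouping you use to get the one-step identity.

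The paper pairs each polynomial with its own reverse. It expands $\lvert\psi_{n+1}^{L,\#}\rvert^2-\lvert\psi_{n+1}^{L}\rvert^2$ and $\lvert\psi_{n+1}^{R,\#}\rvert^2-\lvert\psi_{n+1}^{R}\rvert^2$ separately. Neither expression closes on its own: each retains cross terms and $\lvert\gamma_n\rvert^2$-terms belonging to the other family. Only their sum simplifies, using $\Re(rs)=\Re(sr)$.

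You instead pair $(\psi_{n+1}^L,\psi_{n+1}^{R,\#})$ and $(\psi_{n+1}^R,\psi_{n+1}^{L,\#})$, which are exactly the pairs the recurrences map into one another. The identity $G^*JG=r_n^2J$ closes the first pair. For the second pair, $HJH^*=r_n^2J$ with $H=\left[\begin{smallmatrix}1&-\gamma_n\\-\overline{\gamma_n}&1\end{smallmatrix}\right]$ acting on the right; this also checks out. So each pair closes separately, and telescoping each one gives a strictly finer statement:
\[
\sum_{\ell=0}^{n}\lvert\psi_\ell^L(p)\rvert^2=\frac{\lvert\psi_{n+1}^{R,\#}(p)\rvert^2-\lvert\psi_{n+1}^{L}(p)\rvert^2}{1-\lvert p\rvert^2},\qquad \sum_{\ell=0}^{n}\lvert\psi_\ell^R(p)\rvert^2=\frac{\lvert\psi_{n+1}^{L,\#}(p)\rvert^2-\lvert\psi_{n+1}^{R}(p)\rvert^2}{1-\lvert p\rvert^2}.
\]
The theorem is the sum of these two.

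Your grouping buys two further things:
\begin{enumerate}
\item The $J$-identities are matrix identities, so your computation also works at two distinct points $p\neq q$. It gives an exact off-diagonal one-step relation, e.g.
\[
\overline{\psi_{n+1}^{R,\#}(p)}\psi_{n+1}^{R,\#}(q)-\overline{\psi_{n+1}^{L}(p)}\psi_{n+1}^{L}(q)=\overline{\psi_{n}^{R,\#}(p)}\psi_{n}^{R,\#}(q)-\overline{p}\,\overline{\psi_{n}^{L}(p)}\psi_{n}^{L}(q)\,q.
\]
The paper's cancellation passes through real parts and is confined to the diagonal.
\item The split identities would let the alternative zeros argument sketched in the paper's subsequent remark treat $\psi_n^L$ and $\psi_n^R$ individually, without first establishing $\cZ(\psi_n^L)=\cZ(\psi_n^R)$.
\end{enumerate}
What the paper's grouping offers is essentially cosmetic: a closer visual match to the classical formula, with each polynomial set against its own reverse.
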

\begin{proof}
Using the rearranged Szeg\H{o} recurrences, we may compute directly that for \(p,q \in \HH\),
\begin{multline*}
    \overline{\q_{n+1}^{L,\#}(p)} \cdot \q_{n+1}^{L,\#}(q) - \overline{\q_{n+1}^L(p)} \cdot \q_{n+1}^L(q) = \\ r_n^{-2}\Bigg[\overline{\q_n^{L,\#}(p)} \q_n^{L,\#}(q) - \overline{\q_n^{L,\#}(p)} q \q_n^R(q) \g_n - \overline{\g_n} \overline{\q_n^R(p)} \overline{p} \q_n^{L,\#}(q) + \overline{\g_n} \overline{\q_n^R(p)} \overline{p}q \q_n^R(q) \g_n \\ - \overline{p} \overline{\q_n^L(p)} \q_n^L(q) q + \overline{p} \overline{\q_n^L(p)} \overline{\g_n} \q_n^{R,\#}(q) + \overline{\q_n^{R,\#}(p)} \g_n \q_n^L(q) q - \overline{\q_n^{R,\#}(p)} \lvert \g_n \rvert^2 \q_n^{R,\#}(q) \Bigg].
\end{multline*}
Setting \(p = q\), this simplifies on the diagonal to
\begin{multline*}
    \lvert \q_{n+1}^{L.\#}(p) \rvert^2 - \lvert \q_{n+1}^L(p) \rvert^2 = r_n^{-2} \Bigg[ \lvert \q_n^{L,\#}(p) \rvert^2 -  \lvert p \rvert^2 \lvert \q_n^L(p)\rvert^2 - 2\Re\Big(\overline{\q_n^{L,\#}(p)} \, p \, \q_n^R(p) \, \g_n\Big) \\ + 2\Re\Big( \overline{\q_n^{R,\#}(p)} \, \g_n \, \q_n^L(p) \, p \Big) - \lvert \g_n \rvert^2 \Big( \lvert \q_n^{R,\#}(p) \rvert^2 - \lvert \q_n^R(p) \rvert^2 \lvert p \rvert^2  \Big) \Bigg].
\end{multline*}

Repeating this calculation for the right polynomials, we obtain
\begin{multline*}
    \overline{\q_{n+1}^{R,\#}(p)} \q_{n+1}^{R,\#}(q) - \overline{\q_{n+1}^R(p)} \q_{n+1}^R(q) = \\ r_n^{-2}\Bigg[\overline{\q_n^{R,\#}(p)} \q_n^{R,\#}(q) - \overline{\q_n^{R,\#}(p)} \g_n \q_n^L(q)q - \overline{p} \overline{\q_n^L(p)} \overline{\g_n} \q_n^{R,\#}(q) + \overline{p} \overline{\q_n^L(p)} \lvert \g_n \rvert^2 \q_n^L(q) q \\ - \overline{\q_n^R(p)} \overline{p} q \q_n^R(q) + \overline{\q_n^R(p)} \overline{p} \q_n^{L,\#}(q) \overline{\g_n} + \g_n \overline{\q_n^{L,\#}(p)} q \q_n^R(q) - \g_n \overline{\q_n^{L,\#}(p)} \q_n^{L,\#}(q) \overline{\g_n} \Bigg]
\end{multline*}
and hence
\begin{multline*}
    \lvert \q_{n+1}^{R,\#}(p)\rvert^2 - \lvert \q_{n+1}^R(p) \rvert^2 = r_n^{-2}\Bigg[ \lvert \q_n^{R,\#}(p) \rvert^2 - \lvert p \rvert^2 \lvert \q_n^R(p) \rvert^2 - 2\Re\Big(\overline{\q_n^{R,\#}(p)} \, \g_n \, \q_n^L(p) \, p \Big) \\ + 2\Re\Big( \g_n \, \overline{\q_n^{L,\#}(p)} \, p \, \q_n^R(p)\Big) - \lvert \g_n \rvert^2 \Big(\lvert \q_n^{L,\#}(p) \rvert^2 - \lvert p \rvert^2 \lvert \q_n^L(p) \rvert^2 \Big) \Bigg].
\end{multline*}

Summing these equalities on the diagonal, we find some pleasing cancellation. Further using the fact that for quaternions \(r,s\in\HH\) one has \(\Re(sr) = \Re(rs)\), we arrive at
\begin{align*}
    & \lvert \q_{n+1}^{L.\#}(p) \rvert^2 - \lvert \q_{n+1}^L(p) \rvert^2 + \lvert \q_{n+1}^{R,\#}(p)\rvert^2 - \lvert \q_{n+1}^R(p) \rvert^2 = \\ & \quad r_n^{-2} \Bigg[ \lvert \q_n^{L,\#}(p) \rvert^2 -  \lvert p \rvert^2 \lvert \q_n^L(p)\rvert^2 + \lvert \q_n^{R,\#}(p) \rvert^2 - \lvert p \rvert^2 \lvert \q_n^R(p) \rvert^2 \\ & \quad - \lvert \g_n \rvert^2 \Big( \lvert \q_n^{R,\#}(p) \rvert^2 - \lvert \q_n^R(p) \rvert^2 \lvert p \rvert^2 + \lvert \q_n^{L,\#}(p) \rvert^2 - \lvert p \rvert^2 \lvert \q_n^L(p) \rvert^2 \Big) \Bigg] \\
    & = (1 - \lvert \g_n \rvert^2)^{-1} (1 - \lvert \g_n \rvert^2) \Bigg[ \lvert \q_n^{L,\#}(p) \rvert^2 + \lvert \q_n^{R,\#}(p) \rvert^2 - \lvert p \rvert^2 \lvert \q_n^L(p)\rvert^2 - \lvert p \rvert^2 \lvert \q_n^R(p) \rvert^2 \Bigg] \\
    & = \lvert \q_n^{L,\#}(p) \rvert^2 + \lvert \q_n^{R,\#}(p) \rvert^2 - \lvert p \rvert^2 \lvert \q_n^L(p)\rvert^2 - \lvert p \rvert^2 \lvert \q_n^R(p) \rvert^2,
\end{align*}
i.e.
\[
    \lvert \q_{n+1}^{L.\#}(p) \rvert^2 + \lvert \q_{n+1}^{R,\#}(p)\rvert^2 - \Big(\lvert \q_{n+1}^L(p) \rvert^2 + \lvert \q_{n+1}^R(p) \rvert^2\Big) = \lvert \q_n^{L,\#}(p) \rvert^2 + \lvert \q_n^{R,\#}(p) \rvert^2 - \lvert p \rvert^2 \Big(\lvert \q_n^L(p)\rvert^2 + \lvert \q_n^R(p) \rvert^2\Big).
\]

This gives a quaternionic version of one half of the Christoffel--Darboux formula, restricted to the diagonal \(p = q\) (specifically, this is a direct quaternionic analogue of the equality (2.2.41) = (2.2.42) of \cite{Sim05a} with \(z = \z\)):
\begin{multline*}
    \frac{\lvert \q_{n+1}^{L.\#}(p) \rvert^2 + \lvert \q_{n+1}^{R,\#}(p)\rvert^2 - \Big(\lvert \q_{n+1}^L(p) \rvert^2 + \lvert \q_{n+1}^R(p) \rvert^2\Big)}{1 - \lvert p \rvert^2} \\ = \frac{\lvert \q_n^{L,\#}(p) \rvert^2 + \lvert \q_n^{R,\#}(p) \rvert^2 - \lvert p \rvert^2 \Big(\lvert \q_n^L(p)\rvert^2 + \lvert \q_n^R(p) \rvert^2\Big)}{1 - \lvert p \rvert^2},
\end{multline*}
for \(n \in \NN\) and \(p \in \HH\setminus\partial\BB\).

Now, set \(Q_n(p) := \frac{\lvert \q_n^{L,\#}(p) \rvert^2 + \lvert \q_n^{R,\#}(p) \rvert^2 - \lvert p \rvert^2 \Big(\lvert \q_n^L(p)\rvert^2 + \lvert \q_n^R(p) \rvert^2\Big)}{1 - \lvert p \rvert^2}\). Using the second form for \(Q_n\) and the first form for \(Q_{n-1}\), we have
\begin{align*}
    Q_n(p) - Q_{n-1}(p) & = \frac{1}{1 - \lvert p \rvert^2} (1 - \lvert p \rvert^2)\Big(\lvert \q_n^L(p) \rvert^2 + \lvert \q_n^R(p) \rvert^2\Big) \\
    & = \lvert \q_n^L(p) \rvert^2 + \lvert \q_n^R(p) \rvert^2 \\
    & = K_n(p) - K_{n-1}(p),
\end{align*}
where
\[
    K_n(p) := \sum_{\ell = 0}^n \lvert \q_\ell^L(p) \rvert^2 + \lvert \q_\ell^R(p) \rvert^2
\]
is an object playing the role in the quaternionic setting of the Christoffel--Darboux kernel, restricted to the diagonal.

Moreover, notice that \(Q_0(p) = \frac{1 + 1 - \lvert p \rvert^2 (1 + 1)}{1 - \lvert p \rvert^2} = 2 = K_0(p)\). An immediate induction on \(n\) demonstrates that \(Q_n(p) = K_n(p)\) for \(n \in \NN\) and \(p \in \HH\setminus\partial\BB\), and so we have a full analogue on the diagonal of the Christoffel--Darboux formula:
\begin{align*}
    \sum_{\ell = 0}^n \lvert \q_\ell^L(p) \rvert^2 + \lvert \q_\ell^R(p) \rvert^2 & = \frac{\lvert \q_{n+1}^{L.\#}(p) \rvert^2 + \lvert \q_{n+1}^{R,\#}(p)\rvert^2 - \Big(\lvert \q_{n+1}^L(p) \rvert^2 + \lvert \q_{n+1}^R(p) \rvert^2\Big)}{1 - \lvert p \rvert^2} \\ 
    & = \frac{\lvert \q_n^{L,\#}(p) \rvert^2 + \lvert \q_n^{R,\#}(p) \rvert^2 - \lvert p \rvert^2 \Big(\lvert \q_n^L(p)\rvert^2 + \lvert \q_n^R(p) \rvert^2\Big)}{1 - \lvert p \rvert^2}.
\end{align*}
\end{proof}

\begin{remark}
    Rearrange the first formula of the previous result to see for \(n \in \NN\) and \(p \in \BB\) that
    \begin{equation*}
    \label{eq:CDDiag}
        \lvert \q_{n+1}^{L.\#}(p) \rvert^2 + \lvert \q_{n+1}^{R,\#}(p)\rvert^2 = \lvert \q_{n+1}^L(p) \rvert^2 + \lvert \q_{n+1}^R(p) \rvert^2 + (1 - \lvert p \rvert^2) \sum_{\ell = 0}^n (\lvert \q_\ell^L(p) \rvert^2 + \lvert \q_\ell^R(p) \rvert^2).
    \end{equation*}

    Once one proves that \(\cZ(\q_n^L) = \cZ(\q_n^R)\) as in the previous section, this equality may be used to provide an alternate, direct proof of the quaternionic Zeros Theorem, analogous to the fifth proof of Theorem 1.7.1 in \cite{Sim05a} but grounded fully in the quaternionic theory.
\end{remark}

\section{The Szeg\H{o}--Verblunsky Theorem}

In this section we obtain an analogue of the Szeg\H{o}--Verblunsky theorem for the quaternions.

\begin{theorem}
\label{thm:QuaternionicSV}
    Let \(c = (c_n)_{n=-\infty}^{\infty} \subset \HH\) be a non-trivial positive definite sequence with quaternionic Verblunsky coefficients \((\g_n)_{n=0}^{\infty}\), choose orthogonal \(i,j\in\SS\) and let \(\mu_i\) be the unique \(q\)-positive measure on \(\TT_i\) with moments \(c\); let the corresponding matrix-valued measure be \(M_i\) with Radon--Nikodym derivative \(W_i \in L^1(\TT_i; \CC_i^{2\times2})\). Then
    \begin{equation}
    \label{eq:quaternionicSV}
        \prod_{n=0}^{\infty} (1 - \lvert \g_n \rvert^2)^2 = \exp\left(\int_0^{2\pi} \log \det W_i(e^{i\th}) \, \frac{\rmd\th}{2\pi}\right).
    \end{equation}

    Moreover, for any other choice of orthogonal \(i',j' \in \SS\) with resulting matrix-valued measure \(M_{i'}\) with Radon--Nikodym derivative \(W_{i'}\), we have
    \[
        \int_0^{2\pi} \log \det W_i(e^{i\th}) \, \frac{\rmd\th}{2\pi} = \int_0^{2\pi} \log \det W_{i'}(e^{i' \th}) \, \frac{\rmd\th}{2\pi},
    \]
    i.e. the matricial Szeg\H{o} entropy of the matrix-valued measure corresponding to \(c\) is invariant under the choice of basis \(i,j\in\SS\).
\end{theorem}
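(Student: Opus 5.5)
We deduce \eqref{eq:quaternionicSV} from the matricial Szeg\H{o}--Verblunsky theorem of Delsarte--Genin--Kamp (see also \cite{DPS08}). For a non-trivial positive $d\times d$ matrix-valued probability measure $M$ with Verblunsky coefficients $(\a_n)$ and absolutely continuous part $W\,\frac{\rmd\th}{2\pi}$, that theorem reads
\[
    \prod_{n=0}^{\infty}\det(I_d-\a_n^*\a_n)=\exp\left(\int_0^{2\pi}\log\det W(e^{i\th})\,\frac{\rmd\th}{2\pi}\right).
\]
We apply it with $d=2$ to $M_i$, the measure with moments $(\x_i(c_n))_{n}$. This measure is positive, and it is a probability measure since $c_0=1$ gives $C_0=\x_i(1)=I_2$. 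It is non-trivial because, by \Cref{lem:UnitaryEquiv}, the block Toeplitz matrices $[\x_i(c_{k-\ell})]$ are unitarily equivalent to $\x_i(T_n(c))$. The latter are positive definite: $\x_i$ preserves positivity, and $T_n(c)$ is positive definite by non-triviality of $c$. Hence $M_i$ satisfies the hypotheses of the matricial theorem.

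The key input is the consistency discussion of \Cref{subsec:ConsistencyVCs}, which identifies the Verblunsky coefficients of $M_i$ as $\a_n=\x_i(\g_n)$. Since $\x_i$ is a unital $*$-homomorphism,
\[
    I_2-\a_n^*\a_n=\x_i(1-\overline{\g_n}\g_n)=(1-\lvert\g_n\rvert^2)I_2,
\]
and so $\det(I_2-\a_n^*\a_n)=(1-\lvert\g_n\rvert^2)^2$. Substituting this into the matricial formula gives \eqref{eq:quaternionicSV} directly.

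The invariance statement then follows immediately. The left-hand side of \eqref{eq:quaternionicSV} depends only on $c$: the quaternionic Verblunsky coefficients $(\g_n)$ come from the global Schur function $f$ built from $c$ and the quaternionic Schur algorithm, with no choice of $i,j$. Applying \eqref{eq:quaternionicSV} to both $(i,j)$ and $(i',j')$ gives equal right-hand sides. If the product is positive, we take logarithms to get equality of the entropies. If the product is zero, both integrals equal $-\infty$, because the matricial theorem is exactly the statement that $\exp$ of the integral vanishes iff the integral is $-\infty$. In either case the entropies agree.

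The main obstacle is the identification $\a_n=\x_i(\g_n)$ together with the normalisation of the matricial theorem. On the first point, I will lean on \Cref{subsec:ConsistencyVCs}, whose argument rests on \Cref{lem:ImageOfXi} and the uniqueness of quaternionic Verblunsky coefficients. On the second, I must check that the cited matricial Szeg\H{o}--Verblunsky theorem uses the same conventions as \cite{DPS08}: the Herglotz kernel $\frac{e^{i\th}+z}{e^{i\th}-z}$ and $\r^L$ rather than $\r^R$. Since $\a_n$ is normal here, $\det\r_n^L=\det\r_n^R$, so any left/right ambiguity is harmless. I would also verify that the moment convention $c_n=\int e^{in\th}\rmd\mu_i$ matches $C_n=\x_i(c_n)$ up to the conjugation $\th\mapsto-\th$. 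Such a reflection leaves $\int\log\det W\,\frac{\rmd\th}{2\pi}$ unchanged, so it does not affect the result.
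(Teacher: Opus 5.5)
Your proposal is correct and follows essentially the same route as the paper: apply the Delsarte--Genin--Kamp matricial Szeg\H{o}--Verblunsky theorem to $M_i$, use $\a_n=\x_i(\g_n)$ from \Cref{subsec:ConsistencyVCs} to get $\det(I_2-\a_n^*\a_n)=(1-\lvert\g_n\rvert^2)^2$, and deduce invariance from the fact that $(\g_n)$ does not depend on the choice of $i,j$. Your extra checks make explicit points the paper leaves implicit: the non-triviality and normalisation of $M_i$, and the case where the product vanishes. In that last case the conclusion rests on the convention $\exp(-\infty)=0$ and on $\int\log\det W_i\,\frac{\rmd\th}{2\pi}$ being well defined in $[-\infty,\infty)$, not on the matricial theorem itself.
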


\begin{proof}
    The matricial Szeg\H{o}--Verblunsky theorem due to Delsarte, Genin and Kamp \cite{DGK78} applied to \(M_i\) states that, if \(M_i\) has Verblunsky coefficients \(\a_n)_{n=0}^{\infty}\), then
    \[
        \prod_{n=0}^{\infty} \det (I - \a_n\a_n^*) = \exp\left(\int_0^{2\pi} \log \det W_i(e^{i\th}) \, \frac{\rmd\th}{2\pi}\right);
    \]
    recall once more that the Verblunsky coefficients of \(M_i\) are related to those of \(\mu_i\) by \(\a_n = \x_i(\g_n)\).
    
    Since \(1 - \lvert\g_n\rvert^2\) is a real number and \(\x_i\) is a \(*\)-homomorphism, we compute directly that
    \[
        \det( I - \a_n\a_n^* ) = \det \x_i(1 - \lvert \g_n \rvert^2) = \det \begin{bmatrix} 1 - \lvert \g_n \rvert^2 & 0 \\ 0 & 1 - \lvert \g_n \rvert^2 \end{bmatrix} = (1 - \lvert \g_n \rvert^2)^2,
    \]
    so that
    \[
        \prod_{n=0}^{\infty} (1 - \lvert \g_n \rvert^2)^2 = \exp\left(\int_0^{2\pi} \log \det W_i(e^{i\th}) \, \frac{\rmd\th}{2\pi}\right).
    \]

    The final claim follows by noting that, since \((\g_n)_{n=0}^{\infty}\) is a global object, that is, invariant under choice of \(i,j\in\SS\), we have
    \[
        \exp\left(\int_0^{2\pi} \log \det W_i(e^{i\th}) \, \frac{\rmd\th}{2\pi}\right) = \prod_{n=0}^{\infty} (1 - \lvert \g_n \rvert^2)^2 = \exp\left(\int_0^{2\pi} \log \det W_{i'}(e^{i' \th}) \, \frac{\rmd\th}{2\pi}\right).
    \]
\end{proof}

\begin{remark}
    The additional power of 2 on the left-hand side of \eqref{eq:quaternionicSV}, as compared to the classical theory, is an artefact of the particular embedding of the quaternions as a subalgebra of \(2 \times 2\) matrices over \(\CC\).
\end{remark}

As we discussed in the introduction, an important consequence of the Szeg\H{o}--Verblunsky theorem is that square-summability of the Verblunsky coefficients is equivalent to log-integrability of the Radon--Nikodym derivative. We obtain a similar condition from this result.

\begin{corollary}
\label{cor:SVConvergence}
    Let \(c = (c_n)_{n=-\infty}^{\infty}\) be a non-trivial positive definite sequence with quaternionic Verblunsky coefficients \((\g_n)_{n=0}^{\infty}\), choose orthogonal \(i,j\in\SS\) and let \(M_i\) be the matrix-valued measure with moments \((\x_i(c_n))_{n=-{\infty}}^{\infty}\) with Radon--Nikodym derivative \(W_i \in L^1(\TT_i; \CC_i^{2\times2})\). Then
    \[
        \sum_{n=0}^{\infty} \lvert \g_n \rvert^2 < \infty \quad \text{ if and only if } \quad \int_0^{2\pi} \log \det W_i(e^{i\th}) \, \frac{\rmd\th}{2\pi} > -\infty.
    \]
\end{corollary}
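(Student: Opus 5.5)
The plan is to derive the corollary directly from \Cref{thm:QuaternionicSV}, using the standard real-analysis lemma cited in the introduction (the remark following Lemma 3.3.5 of \cite{Kat04}). That lemma says that for \(0 \leq a_n < 1\), the product \(\prod_n (1 - a_n)\) is strictly positive if and only if \(\sum_n a_n < \infty\). We will apply it with \(a_n = \lvert \g_n \rvert^2\).

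First we note that non-triviality of \(c\) forces \(\lvert \g_n \rvert < 1\) for every \(n\). Indeed, \(\a_n = \x_i(\g_n)\) are the Verblunsky coefficients of a non-trivial matrix measure, so the defect matrices \(\r_n\) are invertible. Since \(\r_n^2 = \x_i(1 - \lvert \g_n \rvert^2)\), this gives \(1 - \lvert \g_n \rvert^2 > 0\). Hence every factor of the product in \eqref{eq:quaternionicSV} lies in \((0,1]\), the partial products decrease, and the infinite product converges to a limit in \([0,1]\).

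Next we show the two conditions are equivalent via the product.
\begin{itemize}
\item Since the factors are positive, \(\prod_n (1 - \lvert \g_n \rvert^2)^2 > 0\) if and only if \(\prod_n (1 - \lvert \g_n \rvert^2) > 0\).
\item By the lemma, the latter holds if and only if \(\sum_n \lvert \g_n \rvert^2 < \infty\). One direction uses \(\log(1-x) \leq -x\). The other uses \(\log(1-x) \geq -Cx\) for \(x\) bounded away from \(1\); this is available once \(\sum \lvert \g_n \rvert^2 < \infty\), because then \(\lvert \g_n \rvert^2 \to 0\), and each individual factor is nonzero.
\item On the other side of \eqref{eq:quaternionicSV}, \(\exp\left(\int \log\det W_i \, \frac{\rmd\th}{2\pi}\right) > 0\) if and only if the integral is \(> -\infty\), with the convention \(\exp(-\infty) = 0\).
\end{itemize}
Chaining these equivalences through \eqref{eq:quaternionicSV} gives the claim.

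The main obstacle is making sense of the integral \(\int \log \det W_i\), so that the case \(-\infty\) is meaningful and corresponds exactly to the product being zero. We plan to handle this as follows. \(W_i\) is positive semidefinite almost everywhere, so \(\det W_i \geq 0\). By AM–GM, \(\det W_i \leq (\mathrm{Tr}\, W_i/2)^2\), so \(\log \det W_i \leq 2 \log(\mathrm{Tr}\, W_i/2) \leq \mathrm{Tr}\, W_i\), whose integral is finite because \(W_i \in L^1\). Thus the positive part of \(\log\det W_i\) is integrable, and the integral is well defined in \([-\infty, \infty)\). This is the same convention under which the Delsarte–Genin–Kamp theorem, and hence \Cref{thm:QuaternionicSV}, is stated, so the equivalence is valid in the degenerate case as well.
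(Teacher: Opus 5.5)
Your argument is correct and follows the same route as the paper: it chains the equivalences \(\int_0^{2\pi}\log\det W_i(e^{i\th})\,\frac{\rmd\th}{2\pi}>-\infty \iff \prod_n(1-\lvert\g_n\rvert^2)^2>0 \iff \sum_n\lvert\g_n\rvert^2<\infty\) through \eqref{eq:quaternionicSV} and the classical product--sum lemma of \cite{Kat04}. The differences are minor. You handle the square by taking square roots of the partial products, whereas the paper writes \((1-\lvert\g_n\rvert^2)^2=1-\lvert\g_n\rvert^2(2-\lvert\g_n\rvert^2)\) and compares the resulting series. Your checks that \(\lvert\g_n\rvert<1\) and that \(\int\log\det W_i\) is well defined in \([-\infty,\infty)\) usefully fill in points the paper leaves implicit.
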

\begin{proof}
    First,
    \[
        \int_0^{2\pi} \log \det W_i(e^{i\th}) \, \frac{\rmd\th}{2\pi} > -\infty \quad \text{ if and only if } \quad \exp\left(\int_0^{2\pi} \log \det W_i(e^{i\th}) \, \frac{\rmd\th}{2\pi}\right) > 0,
    \]
    and via \Cref{thm:QuaternionicSV} the latter condition is equivalent
    \[
        \prod_{n=0}^{\infty} (1 - \lvert \g_n \rvert^2)^2 > 0.
    \]
    Rewriting \((1 - \lvert \g_n \rvert^2)^{2}\) as \(1 - 2\lvert \g_n \rvert^2 + \lvert \g_n \rvert^4 = 1 - \lvert \g_n \rvert^2 (2 - \lvert \g_n \rvert^2)\), a classical lemma of real analysis (as in e.g. \cite{Kat04} following Lemma 3.3.5) shows that this product converges and is nonzero if and only if
    \[
        \sum_{n=0}^{\infty} \lvert \g_n \rvert^2 (2 - \lvert \g_n \rvert^2) < \infty,
    \]
    so it remains to show that
    \[
        \sum_{n=0}^{\infty} \lvert \g_n \rvert^2 (2 - \lvert \g_n \rvert^2) < \infty \quad \text{ if and only if } \sum_{n=0}^{\infty} \lvert \g_n \rvert^2.
    \]

    Supposing that \(\sum_{n=0}^{\infty} \lvert \g_n \rvert^2 (2 - \lvert \g_n \rvert^2) < \infty\), notice that since \(\g_n \in \BB\) for all \(n\), we have that \(2 - \lvert \g_n \rvert^2 > 1\) and consequently \(\lvert \g_n \rvert^2 (2 - \lvert \g_n \rvert^2) > \lvert \g_n \rvert^2\) for all \(n\). It follows by comparison that
    \[
        \sum_{n=0}^{\infty} \lvert \g_n \rvert^2 < \sum_{n=0}^{\infty} \lvert \g_n \rvert^2 (2 - \lvert \g_n \rvert^2) < \infty.
    \]

    On the other hand, suppose that \(\sum_{n=0}^{\infty} \lvert \g_n \rvert^2 < \infty\). Again since \(\g_n \in \BB\) for all \(n\), we have that \(\lvert \g_n \rvert^4 < \lvert \g_n \rvert^2\), so once more by comparison we have that
    \[
        \sum_{n=0}^{\infty} \lvert \g_n \rvert^4 < \sum_{n=0}^{\infty} \lvert \g_n \rvert^2 < \infty.
    \]
    It follows that
    \[
        \sum_{n=0}^{\infty} \lvert \g_n \rvert^2 (2 - \lvert \g_n \rvert^2) = 2\sum_{n=0}^{\infty} \lvert \g_n \rvert^2 + \sum_{n=0}^{\infty} \lvert \g_n \rvert^4 < \infty,
    \]
    and the claim is proved.
    
\end{proof}

We complete the section with a discussion of how \Cref{thm:QuaternionicSV} recovers the classical Szeg\H{o}--Verblunsky theorem when all of the quaternionic Verblunsky coefficients lie in a particular complex plane.

\begin{remark}
    Notice that when all the Verblunsky coefficients of a positive definite sequence \(c = (c_n)_{n=-\infty}^{\infty}\) lie in the same complex plane, say \((\g_n)_{n=0}^{\infty} \subseteq \CC_i\), then by \Cref{thm:MatrixVerForm}, the sequence \(c\) itself must as well. If \(C_n := \x_i(c_n)\) and \(M_i\) is the matrix-valued measure with moments \((C_n)_{n=-\infty}^{\infty}\), it follows that the \(C_n\) are in fact diagonal matrices given by \(C_n = \begin{bmatrix} c_n & 0 \\ 0 & c_{-n} \end{bmatrix}\), and therefore the Radon--Nikodym derivative \(W_i\) of \(M_i\) takes values in diagonal matrices. Moreover, notice that under these conditions we similarly have \(\det \a_n = \det \x_i(\g_n) = \lvert \g_n \rvert^2\), where \((\a_n)_{n=0}^{\infty}\) are the Verblunsky coefficients of \(M_i\).

	If \(\mu\) is the (now complex) measure with moments \((c_n)_{n=-\infty}^{\infty}\) and \(\tilde{\mu}\) is the measure such that
	\[
		\int_0^{2\pi} e^{in\th} \, \rmd \tilde{\mu}(\th) = \int_0^{2\pi} e^{-in\th} \, \rmd\mu(\th)
	\]
	then the matrix-valued measure \(\begin{bmatrix} \mu & 0 \\ 0 & \tilde{\mu} \end{bmatrix}\) is a representing measure for the moment sequence \((C_n)_{n=-\infty}^{\infty}\); by the uniqueness of the solution to the moment problem (see e.g. \cite[Theorem 1]{DGK78}), this measure is therefore the measure \(M_i\).

	Writing the Lebesgue decomposition of \(\mu\) as \(\rmd \mu = w \frac{\rmd\th}{2\pi} + \rmd \mu_\rms\), it is clear that \(\tilde{\mu}\) has Lebesgue decomposition \(\rmd\tilde{\mu} = \tilde{w} \frac{\rmd\th}{2\pi} + \rmd\tilde{\mu}_\rms\) where \(\tilde{w}(e^{i\th}) = w(e^{i(2\pi - \th)})\), and so \(M_i\) decomposes as
	\[
		\rmd M_i = \begin{bmatrix} w & 0 \\ 0 & \tilde{w} \end{bmatrix} \frac{\rmd\th}{2\pi} + \begin{bmatrix} \rmd \mu_\rms & 0 \\ 0 & \rmd \tilde{\mu}_\rms \end{bmatrix}.
	\]
	Thus the diagonal matrix-valued function \(W_i\) is given by
	\[
		W_i = \begin{bmatrix}w & 0 \\ 0 & \tilde{w}\end{bmatrix}
	\]
    and the singular part of \(M_i\) is
    \[
        \begin{bmatrix} \rmd \mu_\rms & 0 \\ 0 & \rmd \tilde{\mu}_\rms \end{bmatrix},
    \]
    see \cite[Section 2]{DGK78} and \cite[pg. 4]{Hof62}.
    
	With this in hand, the quaternionic Szeg\H{o}--Verblunsky theorem becomes
	\begin{align*}
		\prod_{n=0}^{\infty} (1 - \lvert \g_n \rvert^2)^2 & = \exp\left(\int_0^{2\pi} \log \det W_i(e^{i\th}) \, \frac{\rmd\th}{2\pi}\right) \\
		& = \exp\left(\int_0^{2\pi} \log w(e^{i\th}) \, \frac{\rmd\th}{2\pi} + \int_0^{2\pi} \log \tilde{w}(e^{i(2\pi - \th)}) \, \frac{\rmd\th}{2\pi}\right) \\
		& =  \exp\left(\int_0^{2\pi} \log w(e^{i\th}) \, \frac{\rmd\th}{2\pi} \right) \cdot  \exp\left(\int_{2\pi}^{0} \log w(e^{i\th}) \, \frac{\rmd(-\th)}{2\pi} \right) \\
		& = \exp\left(\int_0^{2\pi} \log w(e^{i\th}) \, \frac{\rmd\th}{2\pi} \right)^2,
	\end{align*}
	and so, at last, we recover the classical result:
	\[
		\prod_{n=0}^{\infty} (1 - \lvert \g_n \rvert^2) = \exp\left(\int_0^{2\pi} \log w(e^{i\th}) \, \frac{\rmd\th}{2\pi} \right).
	\]

    We note that in this case, one correspondingly obtains \Cref{cor:SVConvergence} immediately via the classical argument.
\end{remark}

\section{Baxter's Theorem}
\label{sec:Baxter}

This paper concludes with a quaternionic Baxter's theorem. We shall need the following notion of absolute continuity for \(q\)-positive measures, generalising the usual notion in a natural fashion.

\begin{definition}
    Let \(i,j \in \SS\) be orthogonal and let \(\mu_i\) be a \(q\)-positive measure on \(\TT_i\). We say that \(\mu_i\) is \emph{absolutely continuous} (with respect to the normalised Lebesgue measure \(\frac{\rmd\th}{2\pi}\) on \(\TT_i\)) if there exists some \(w \in L^1(\TT_i; \HH)\) such that \(\rmd\mu_i(\th) = w(e^{i\th}) \frac{\rmd\th}{2\pi}\).
\end{definition}

We shall also require two notions of Wiener algebra previously studied in the literature. For a discussion on the theory of the matricial Wiener algebra, see Chapter 29, Sections 8 \& 9 of \cite{GGK93}. We shall denote the Wiener algebra on the circle \(\TT_i\) by \(\mathscr{W}_i\) and the \emph{positive} Wiener algebra on \(\TT_i\) by \(\mathscr{W}_{+,i}\). Details on the quaternionic Wiener algebra were established initially in \cite{ACKS16}; in keeping with this paper, we shall denote the quaternionic Wiener algebra by \(\mathscr{W}_{\HH}\).

\begin{theorem}
\label{thm:Baxter1}
    Let \(c = (c_n)_{n=-\infty}^{\infty}\) be a non-trivial positive semidefinite sequence, fix orthogonal \(i,j \in \SS\), and let \(\mu_i\) be the \(q\)-positive measure on \(\TT_i\) with moment sequence \(c\). If the Verblunsky coefficients \((\g_n)_{n=0}^{\infty}\) associated to \(c\) are summable, i.e. if 
    \[
        \sum_{n=0}^{\infty} \lvert \g_n \rvert < \infty,
    \]
    then \(\mu_i\) is absolutely continuous and the Radon--Nikodym derivative \(w\) of \(\mu_i\) extends uniquely to an element of the quaternionic Wiener algebra \(\mathscr{W}_i\).
\end{theorem}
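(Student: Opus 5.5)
The plan is to pass to the matrix setting and invoke the matricial Baxter theorem of Geronimo \cite{Ger81} (see also \cite{DK16} and \cite{DPS08}), then pull the conclusion back to the quaternions via \(\x_i\). Let \(M_i\) be the \(\CC_i^{2\times2}\)-valued measure with moments \(C_n=\x_i(c_n)\). By \Cref{subsec:ConsistencyVCs}, its Verblunsky coefficients are \(\a_n=\x_i(\g_n)\). Since \(\x_i\) is isometric, \(\norm{\a_n}=\lvert\g_n\rvert\). Hence \(\sum_n\norm{\a_n}<\infty\) follows directly from the hypothesis. We may also normalise so that \(c_0=1\), since rescaling by \(c_0>0\) (non-triviality) does not affect the Verblunsky coefficients.

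Applying the matricial Baxter theorem, summability of the \(\a_n\) gives three things. First, \(\sum_n\norm{C_n}<\infty\). Second, \(M_i\) is absolutely continuous, \(\rmd M_i=W_i\,\frac{\rmd\th}{2\pi}\). Third, \(W_i(e^{i\th})=\sum_{n}C_n e^{in\th}\) (with the paper's moment convention) lies in the matricial Wiener algebra and is positive definite on \(\TT_i\). Only the first two facts are needed here. Since \(\norm{C_n}=\lvert c_n\rvert\), we get \(\sum_n\lvert c_n\rvert<\infty\).

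Next I would identify \(w\). Define \(w(e^{i\th}):=\sum_{n\in\ZZ}e^{-in\th}c_n\), choosing the sign of the exponent so that the Fourier coefficients reproduce \(c_n=\int e^{in\th}w\,\frac{\rmd\th}{2\pi}\). This series converges absolutely and uniformly on \(\TT_i\), so \(w\in L^1(\TT_i;\HH)\) and \(w\) is continuous. Its moments are exactly \(c_n\), because the \(c_n\) are placed on the right, \(e^{in\th}\) commutes with the scalar integration, and the sum may be interchanged with the integral by uniform convergence. Thus \(w\,\frac{\rmd\th}{2\pi}\) is a quaternion-valued measure on \(\TT_i\) with moments \(c\).

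The main obstacle is showing that this measure is \(q\)-positive, so that uniqueness in the quaternionic Herglotz theorem (\cite[Theorem 5.5]{ACKS15}) forces \(\mu_i=w\,\frac{\rmd\th}{2\pi}\). I would handle this in two steps.

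\begin{enumerate}[label=\roman*)]
    \item Decompose \(w=w^{(1)}+w^{(2)}j\) and check the symmetry conditions in \Cref{def:qPosMeasure} at the level of Fourier coefficients. These conditions follow from \(c_{-n}=\overline{c_n}\) together with \(e^{ik\th}j=je^{-ik\th}\).
    \item Observe that the associated \(2\times2\) matrix density \(\begin{bmatrix} w^{(1)} & w^{(2)}\\ \overline{w^{(2)}} & \tilde w^{(1)}\end{bmatrix}\) has the same moments as \(M_i\), up to the fixed unitary conjugation relating the two conventions. By uniqueness of the matrix moment problem \cite{DGK78}, it is therefore \(W_i\), possibly conjugated by that unitary, and so it is positive semidefinite.
\end{enumerate}

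Alternatively, I would avoid the direct verification: \(\mu_i\) exists and is \(q\)-positive, and its matrix \(M_i\) built as in \Cref{def:qPosMeasure} is absolutely continuous. Its entries \(\mu^{(1)},\mu^{(2)}\) are therefore absolutely continuous with densities whose Fourier coefficients are read off from the \(c_n\). This directly gives \(\rmd\mu_i=w\,\frac{\rmd\th}{2\pi}\).

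Finally, \(w\) extends uniquely to \(p\mapsto\sum_n p^{-n}c_n\) on \(\partial\BB\). This is an element of \(\mathscr{W}_{\HH}\) in the sense of \cite{ACKS16}, because \(\sum\lvert c_n\rvert<\infty\). Uniqueness holds because an element of \(\mathscr{W}_{\HH}\) is determined by its coefficient sequence, which is in turn determined by its restriction to a single slice \(\TT_i\) through the Fourier coefficients.
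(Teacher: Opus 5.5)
Your proposal is correct, but it reaches the key conclusion by a different and shorter route than the paper. The paper takes from the matrix Baxter theorem the factorisation \(\D_i = Q_i^*Q_i\) with \(Q_i^{\pm1}\in\mathscr{W}_{+,i}^{2\times2}\). It compares the \((1,1)\)- and \((1,2)\)-entries of the moments to see that \(\mu_i^{(1)},\mu_i^{(2)}\) have densities \(\D_{11},\D_{12}\). It then uses the algebra property of \(\mathscr{W}_i\) on \(\D_{11}=\lvert q_{11}\rvert^2+\lvert q_{21}\rvert^2\) and \(\D_{12}=\overline{q_{11}}q_{12}+\overline{q_{21}}q_{22}\) to place these densities in \(\mathscr{W}_i\), before assembling \(w=\D_{11}+\D_{12}j\).

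You instead use only the weaker consequence \(\sum_n\norm{C_n}<\infty\), which also follows from the factorisation since \(Q_i^*Q_i\in\mathscr{W}_i^{2\times2}\). You transfer it through the isometry \(\x_i\) to \(\sum_n\lvert c_n\rvert<\infty\) and write \(w=\sum_n e^{-in\th}c_n\) outright. This gives explicit Wiener coefficients: they are just the reindexed moments \(c_{-n}\). It also makes absolute continuity a consequence of moment summability rather than a separate input, and avoids the entrywise computation. The paper's route uses more of the matrix theorem than this direction needs.

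Two small remarks. First, the step you flag as the main obstacle is not actually needed. The \(\CC_i\)- and \(j\)-parts of \(c_n\) are exactly the Fourier--Stieltjes coefficients of the complex measures \(\mu_i^{(1)}\) and \(\mu_i^{(2)}\), and complex measures on the circle are determined by these. So any quaternion-valued measure on \(\TT_i\) with moments \(c\) equals \(\mu_i\), and no \(q\)-positivity check or appeal to \cite[Theorem 5.5]{ACKS15} is required. Your checks i)--ii) are nonetheless correct, and no unitary conjugation arises there: the moments of your \(2\times2\) density match \(\x_i(c_n)\) entry by entry.

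Second, with the moment convention \(C_n=\int e^{in\th}\,\rmd M_i\), the density is \(W_i=\sum_n C_n e^{-in\th}\), not \(\sum_n C_n e^{in\th}\). This slip is harmless because you never use that formula, and your sign for \(w\) is the right one.
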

\begin{proof}
    Suppose that \((\g_n)_{n=0}^{\infty}\) is summable and let \(M_i\) be the matrix-valued measure arising from \(\mu_i\); denote its Verblunsky coefficients by \((\a_n)_{n=0}^{\infty}\). We first observe that the norms of these Verblunsky coefficients are given by
    \begin{align*}
        \lVert \a_n \rVert =& \; \lVert \x_i(\g_n) \rVert = \max_{n=1,2} \sqrt{\l_n(\x_i(\g_n) \x_i(\g_n)^*)} \\ =& \; \max_{n=1,2} \sqrt{\l_n( \x_i(\lvert \g_n \rvert^2))} = \max_{n=1,2} \sqrt{\l_n\left(\begin{bmatrix} \lvert \g_n \rvert^2 & 0 \\ 0 & \lvert \g_n \rvert^2 \end{bmatrix}\right)} = \lvert \g_n \rvert,
    \end{align*}
    for any \(n \in \NN\). It follows that \((\g_n)_{n=0}^{\infty}\) is summable if and only if \((\a_n)_{n=0}^{\infty}\) is a summable sequence in \(\CC_i^{2\times 2}\). In this case, by the matrix Baxter's theorem first shown in \cite{Ger81} (see also \cite{DK16}), \(M_i\) is absolutely continuous, and its Radon--Nikodym derivative \(\D_i\) has
    \[
        \D_i = Q_i^* Q_i = R_i R_i^*
    \]
    where \(Q_i^{\pm 1}, R_i^{\pm 1} \in \mathscr{W}_{+,i}^{2\times2}\), the \(2\times 2\) matricial positive Wiener algebra on \(\TT_i\).
    
    Since \(\D_i\) takes values in positive semidefinite matrices, we may write
    \[
        \D_i = \begin{bmatrix}\D_{11} & \D_{12} \\[6pt] \overline{\D_{12}} & \D_{22} \end{bmatrix}.
    \]
    Now, as \(\rmd M_i = \D_i \frac{\rmd\th}{2\pi}\), its moments are given by
    \[
        \int_0^{2\pi} e^{i n \th} \D(e^{i\th}) \, \frac{\rmd\th}{2\pi} = \begin{bmatrix} \int_0^{2\pi} e^{i n \th} \D_{11}(e^{i\th}) \, \frac{\rmd\th}{2\pi} & \int_0^{2\pi} e^{i n \th} \D_{12}(e^{i\th}) \, \frac{\rmd\th}{2\pi} \\[6pt] \int_0^{2\pi} e^{i n \th} \overline{\D_{12}}(e^{i\th}) \, \frac{\rmd\th}{2\pi} & \int_0^{2\pi} e^{i n \th} \D_{22}(e^{i\th}) \, \frac{\rmd\th}{2\pi} \end{bmatrix};
    \]
    on the other hand, since \(M_i\) has moments \((\x_i(c_n))_{n=-\infty}^{\infty}\),
    \[
        \int_0^{2\pi} e^{i n \th} \D(e^{i\th}) \, \frac{\rmd\th}{2\pi} = \begin{bmatrix} \int_0^{2\pi} e^{i n \th} \, \rmd\mu_1(\th) & \int_0^{2\pi} e^{i n \th} \, \rmd\mu_2(\th) \\[6pt] -\int_0^{2\pi} e^{-i n \th} \, \rmd\overline{\mu_2}(\th) & \int_0^{2\pi} e^{-i n \th} \rmd\mu_1(\th)\end{bmatrix},
    \]
    where \(c = (c_n)_{n=-\infty}^{\infty}\) has \(q\)-positive representing measure \(\mu_i = \mu_i^{(1)} + \mu_i^{(2)} j\). It follows by comparing (1,1)- and (1,2)-entries that \(\mu^{(1)}, \mu^{(2)}\) are themselves both absolutely continuous, with densities
    \[
        \rmd\mu_i^{(1)} = \D_{11}(e^{i\th})\, \frac{\rmd\th}{2\pi}, \quad \quad \quad \rmd\mu_i^{(2)} = \D_{12}(e^{i\th})\,\frac{\rmd\th}{2\pi}.
    \]
    
    Writing \(Q_i = \begin{bmatrix}q_{11} & q_{12} \\ q_{21} & q_{22} \end{bmatrix}\), we have that \(q_{nm} \in \mathscr{W}_{+,i}\) for \(n,m=1,2\), and \(\D_i = Q_i^* Q_i\) implies that the density of \(\mu_1\) is \(\D_{11} = \lvert q_{11} \rvert^2 + \lvert q_{21} \rvert^2 \in \mathscr{W}_i\). We may similarly conclude that the density of \(\mu_2\) is \(\D_{12} = \overline{q_{11}}q_{12} + \overline{q_{21}}q_{22} \in \mathscr{W}_i\). Thus we may write \(\mu_i\) as
    \[
        \rmd\mu_i(\th) = \D_{11}(e^{i\th})\frac{\rmd\th}{2\pi} + \D_{12}(e^{i\th})\frac{\rmd\th}{2\pi}j = \left(\D_{11}(e^{i\th}) + \D_{12}(e^{i\th})j\right)\frac{\rmd\th}{2\pi},
    \]
    i.e. \(\mu_i\) is absolutely continuous with Radon--Nikodym derivative
    \[
        w(e^{i\th}) = \D_{11}(e^{i\th}) + \D_{12}(e^{i\th})j,
    \]
    where \(\D_{11}, \D_{12} \in \mathscr{W}_i\).

    It remains for us to identify \(w\) uniquely with an element of the quaternionic Wiener algebra \(\mathscr{W}_\HH\). To see this, we may use the fact that \(\D_{11}, \D_{12} \in \mathscr{W}_i\) to write
    \[
        \D_{11}(e^{i\th}) = \sum_{n=-\infty}^{\infty} e^{in\th} \D_{11}^{(n)}, \quad \D_{12}(e^{i\th}) = \sum_{n=-\infty}^{\infty} \D_{12}^{(n)}
    \]
    where the \(\CC\)-valued sequences \((\D_{11}^{(n)})_{n=-\infty}^{\infty}, (\D_{12}^{(n)})_{n=-\infty}^{\infty}\) are summable. Then we have for \(\th \in [0,2\pi)\) that
    \[
        w(e^{i\th}) = \left(\sum_{n=-\infty}^{\infty} e^{in\th} \D_{11}^{(n)}\right) + \left(\sum_{n=-\infty}^{\infty} e^{in\th} \D_{12}^{(n)}\right) j = \sum_{n=-\infty}^{\infty} e^{in\th} (\D_{11}^{(n)} + \D_{12}^{(n)}j)
    \]
    where the (now \(\HH\)-valued) coefficient sequence \((\D_{11}^{(n)} + \D_{12}^{(n)}j)_{n=-\infty}^{\infty}\) is also summable: if \(z,w \in \CC_i\) then the triangle inequality gives \(\lvert z + wj \rvert \leq \lvert z \rvert + \lvert w \rvert\) so that
    \[
        \sum_{n=-\infty}^{\infty} \lvert \D_{11}^{(n)} + \D_{12}^{(n)}j \rvert \leq \sum_{n=-\infty}^{\infty} \lvert \D_{11}^{(n)} \rvert + \lvert \D_{12}^{(n)} \rvert = \sum_{n=-\infty}^{\infty} \lvert \D_{11}^{(n)} \rvert + \sum_{n=-\infty}^{\infty} \lvert \D_{12}^{(n)} \rvert,
    \]
    which is bounded as the sum of two bounded quantities. Thus the function \(\tilde{w} : \partial\BB \to \HH\) given by
    \[
        \tilde{w}(p) = \sum_{n=-\infty}^{\infty} p^n (\D_{11}^{(n)} + \D_{12}^{(n)}j)
    \]
    is an element of the quaternionic Wiener algebra \(\mathscr{W}_\HH\) with \(\tilde{w}\vert_{\TT_i} = w\).

    Moreover, observe that this association is unique: if \(f \in \mathscr{W}_\HH\) has \(f\vert_{\TT_i} = w\) then \(f\vert_{\TT_i} - w \equiv 0\), so the Fourier coefficients \((f_n)_{n=-\infty}^{\infty}\) are equal to those of \(w\), and hence of \(\tilde{w}\), that is, \(f = w\) and the extension of \(w\) is unique.
\end{proof}

We conclude with the converse direction. We shall make use of the following notation.

\begin{definition}
	We define an involution on \(W_i\) via coefficient-wise conjugation: if \(w \in \mathscr{W}_i\) has Fourier series \(w(e^{i\th}) = \sum_{n=-\infty}^{\infty} e^{i n \th} w_n\), then
	\[
		\overline{w}(e^{i\th}) := \sum_{n=-\infty}^{\infty} e^{i n \th} \overline{w_n}.
	\]
\end{definition}

\begin{theorem}
    Let \(c = (c_n)_{n=-\infty}^{\infty}\) be a non-trivial positive definite sequence, fix orthogonal \(i,j\in\SS\), and let \(\mu_i\) be the \(q\)-positive measure on \(\TT_i\) with moment sequence \(c\). If \(\mu_i\) is absolutely continuous with Radon--Nikodym derivative \(w \in \mathscr{W}_\HH\), then the Verblunsky coefficients \((\g_n)_{n=0}^{\infty}\) associated to \(c\) are summable.
\end{theorem}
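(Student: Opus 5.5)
The plan is to push everything into the matrix setting and apply the converse direction of the matricial Baxter theorem (\cite{Ger81}, see also \cite{DK16}). That result says: if a positive \(2\times 2\) matrix measure \(M_i\) is absolutely continuous with density \(\D_i \in \mathscr{W}_i^{2\times 2}\), and \(\D_i(e^{i\th})\) is positive definite for every \(\th\), then its Verblunsky coefficients \((\a_n)_{n=0}^{\infty}\) are summable. The proof of \Cref{thm:Baxter1} gives \(\lVert \a_n \rVert = \lvert \g_n \rvert\) with \(\a_n = \x_i(\g_n)\). Summability of \((\a_n)_{n=0}^{\infty}\) is therefore equivalent to summability of \((\g_n)_{n=0}^{\infty}\), and the conclusion follows once the two hypotheses are checked for \(M_i\).

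\emph{Step 1: the density of \(M_i\) is Wiener.} Write \(w = w^{(1)} + w^{(2)} j\) with \(w^{(1)}, w^{(2)}\) \(\CC_i\)-valued on \(\TT_i\). The Fourier coefficients of \(w\) are summable, and the inequality \(\lvert z \rvert, \lvert v\rvert \le \lvert z + vj\rvert\) for \(z,v \in \CC_i\) then makes the coefficients of \(w^{(1)}\) and \(w^{(2)}\) summable. Hence \(w^{(1)}, w^{(2)} \in \mathscr{W}_i\).

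Next compute the moments \(\x_i(c_n)\) entrywise, using \(e^{in\th} j = j e^{-in\th}\) and the symmetry conditions in \Cref{def:qPosMeasure}. This should show that \(M_i\) is absolutely continuous with density
\[
\D_i(e^{i\th}) = \begin{bmatrix} w^{(1)}(e^{i\th}) & w^{(2)}(e^{i\th}) \\ -\overline{w^{(2)}}(e^{-i\th}) & \overline{w^{(1)}}(e^{-i\th}) \end{bmatrix},
\]
where \(\overline{w}\) is the coefficient-wise conjugation just defined. The reflections \(\th \mapsto -\th\) and \(\overline{\,\cdot\,}\) preserve \(\mathscr{W}_i\), so \(\D_i \in \mathscr{W}_i^{2\times 2}\). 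Uniqueness in the matrix moment problem (\cite[Theorem 1]{DGK78}) identifies this measure with \(M_i\). This step is routine bookkeeping of signs and conjugations.

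\emph{Step 2: pointwise positive definiteness.} This is the main obstacle. By \(q\)-positivity \(\D_i \ge 0\) a.e., and by continuity \(\D_i \ge 0\) everywhere. To get strict positivity, I would use non-triviality of \(c\) and aim to show that \(\det \D_i\) has no zeros on \(\TT_i\).

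The first thing I would try is to note that Wiener-class densities in the scalar OPUC theory can vanish at a point while the measure is still non-trivial, so non-triviality alone is insufficient. The statement therefore implicitly needs the quaternionic analogue of condition ii) of Baxter's theorem, namely \(w\) being ``positive'' in the sense that \(\D_i(e^{i\th}) > 0\) for all \(\th\). Equivalently, since \(\det \D_i\) is invariant under the choice of \(i\) (as in \Cref{thm:QuaternionicSV}), this is the condition that \(\det \D_i\) does not vanish. If the intended hypothesis is only as written, I would read it as including this positivity condition (as in the matrix Baxter theorem and \Cref{cor:QuaternionicBaxter}), and state it explicitly in the proof.

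With \(\D_i > 0\) on \(\TT_i\) and \(\D_i \in \mathscr{W}_i^{2\times 2}\), Wiener's lemma (\cite{GGK93}, Ch.~29) gives \(\D_i^{-1} \in \mathscr{W}_i^{2\times2}\). The matrix factorisation theorem then yields \(\D_i = Q_i^* Q_i = R_i R_i^*\) with \(Q_i^{\pm1}, R_i^{\pm1} \in \mathscr{W}_{+,i}^{2\times2}\).

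\emph{Step 3: conclude.} Geronimo's converse now gives \(\sum_n \lVert \a_n \rVert < \infty\). Applying \(\lVert \a_n \rVert = \lvert \g_n \rvert\), which relies on Subsection~\ref{subsec:ConsistencyVCs} to identify \(\a_n = \x_i(\g_n)\), yields \(\sum_n \lvert \g_n \rvert < \infty\).
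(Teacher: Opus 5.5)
Your argument is the paper's argument: lift to \(M_i\), show the density \(W_i\) lies in \(\mathscr{W}_i^{2\times 2}\) because \(w^{(1)},w^{(2)}\in\mathscr{W}_i\), apply the matrix Baxter theorem, and pull summability back through \(\lVert\a_n\rVert=\lvert\g_n\rvert\).

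The one place you diverge is Step 2, and there you are right and the paper is not. The paper invokes the matrix Baxter theorem of \cite{DK16} as soon as \(W_i\in\mathscr{W}_i^{2\times2}\). It never checks that \(W_i(e^{i\th})\succ 0\) for every \(\th\). That hypothesis is indispensable and does not follow from non-triviality.

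Concretely, take \(c_0=1\), \(c_{\pm1}=-\tfrac12\) and \(c_n=0\) otherwise, i.e. \(w(e^{i\th})=1-\cos\th\). Then \(c\) is non-trivial and \(w\in\mathscr{W}_\HH\). Since \(c\subset\RR\), one has \(\x_i(c_n)=c_nI_2\), so \(M_i=\mu I_2\) with \(\mu=(1-\cos\th)\frac{\rmd\th}{2\pi}\). Its Verblunsky coefficients are \(\b_nI_2\), where \(\b_n=-1/(n+2)\) are the classical coefficients of \(\mu\). By \Cref{subsec:ConsistencyVCs}, \(\lvert\g_n\rvert=1/(n+2)\), which is not summable.

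So the statement, and the ``if'' direction of \Cref{cor:QuaternionicBaxter}, are false as written. They need the analogue of condition ii) of the classical theorem, namely \(W_i(e^{i\th})\succ0\) on \(\TT_i\). Since \(W_i\succeq 0\), this is equivalent to \(\det W_i\) having no zeros. With that hypothesis stated explicitly, your proof is complete. The detour through Wiener's lemma and the factorisation is unnecessary, because the matrix Baxter theorem takes exactly ``Wiener and pointwise positive definite'' as its hypothesis.

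There is a smaller slip in Step 1. With the coefficient-wise conjugation one has \(\overline{w}(e^{i\th})=\overline{w(e^{-i\th})}\) pointwise, so writing \(\overline{w^{(\ell)}}(e^{-i\th})\) reflects twice. Your bottom row then equals \(-\overline{w^{(2)}(e^{i\th})}\) and \(\overline{w^{(1)}(e^{i\th})}\) pointwise, which makes the matrix non-Hermitian. The correct density, as in the paper, is \(\left[\begin{smallmatrix} w^{(1)} & w^{(2)}\\ -\overline{w^{(2)}} & \overline{w^{(1)}}\end{smallmatrix}\right](e^{i\th})\) with coefficient-wise bars. Pointwise, its bottom row is \(\overline{w^{(2)}(e^{i\th})}\) and \(w^{(1)}(e^{-i\th})\), using that \(w^{(1)}\) is real and \(w^{(2)}\) is odd.

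This does not affect Wiener membership. It does matter when you state the added hypothesis in terms of \(w\): it reads \(w^{(1)}(e^{i\th})\,w^{(1)}(e^{-i\th})>\lvert w^{(2)}(e^{i\th})\rvert^2\) for all \(\th\).

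Finally, \Cref{thm:QuaternionicSV} only gives \(i\)-independence of \(\int\log\det W_i\), not of \(\det W_i\) pointwise. Either prove the pointwise statement separately or simply impose the condition for the fixed \(i\).
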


\begin{proof}
    Write \(\rmd\mu_i(\th) = w(e^{i\th}) \, \frac{\rmd\th}{2\pi} = (w^{(1)} + w^{(2)} j ) \, \frac{\rmd\th}{2\pi}\) with \(w \in \mathscr{W}_{\HH}\); then \(w^{(1)}\) and \(w^{(2)}\) both lie in \(\mathscr{W}_{i}\). Set \(C_n = \x_i(c_n)\) for \(n \in \ZZ\). It follows from \(q\)-positivity of \(\mu_i\) that for \(N = 1, 2, \ldots\) one has positivity of the \(N \times N\) block-Toeplitz matrices of \((C_n)_{n=-\infty}^{\infty}\):
    \[
        \begin{bmatrix} C_{m-n} \end{bmatrix}_{n,m=0}^{N} \succeq 0.
    \]
    
    The matrix-valued measure \(M_i\) associated to \(\mu_i\), i.e. the measure with moments \((C_n)_{n=-\infty}^{\infty}\), has
    \[
        C_n = \int_0^{2\pi} e^{in\th} \rmd M_i(\th),
    \]
    so using \(C_n = \x_i(c_n)\), if we write \(c_n = c_n^{(1)} + c_n^{(2)}j\) for \(n \in \NN\) and \(c_n^{(1)}, c_n^{(2)} \in \CC_i\), we have
    \[
        \int_0^{2\pi} e^{in\th} \rmd M_i(\th) = \begin{bmatrix} c_n^{(1)} & c_n^{(2)} \\[6pt] -\overline{c_n^{(2)}} & \overline{c_n^{(1)}} \end{bmatrix} = \begin{bmatrix} \int_0^{2\pi} e^{in\th} w^{(1)}(e^{i\th}) \frac{\rmd\th}{2\pi} & \int_0^{2\pi} e^{in\th}w^{(2)}(e^{i\th})\frac{\rmd\th}{2\pi} \\[6pt] - \int_0^{2\pi} e^{in\th} \overline{w^{(2)}}(e^{i\th})\frac{\rmd\th}{2\pi} & \int_0^{2\pi} e^{in\th}\overline{w^{(1)}}(e^{i\th}) \frac{\rmd\th}{2\pi}\end{bmatrix}. 
    \]
    Since this holds for all \(n\in\ZZ\), we may conclude that \(M_i\) has Radon--Nikodym derivative given by
    \[
        W_i(e^{i\th}) = \begin{bmatrix} w^{(1)}(e^{i\th}) & w^{(2)}(e^{i\th}) \\[6pt] -\overline{w^{(2)}}(e^{i\th}) & \overline{w^{(1)}}(e^{i\th}) \end{bmatrix}.
    \]
    
    Then
    \[
        \begin{bmatrix} c_n^{(1)} & c_n^{(2)} \\[6pt] -\overline{c_n^{(2)}} & \overline{c_n^{(1)}}\end{bmatrix} = C_n = \int_0^{2\pi} e^{in\th} \, \rmd M_i(\th) = \begin{bmatrix} \int_0^{2\pi} e^{in\th} \, \rmd M_i^{(1,1)} & \int_0^{2\pi} e^{in\th} \, \rmd M_i^{(1,2)} \\[6pt] \int_0^{2\pi} e^{in\th} \, \rmd M_i^{(2,1)} & \int_0^{2\pi} e^{in\th} \, \rmd M_i^{(2,2)} \end{bmatrix}.
    \]
    
    Since \(w \in \mathscr{W}_\HH\), we have since \(1\) and \(j\) are orthogonal that for \(\ell = 1,2\),
	\[
		\sum_{n=-\infty}^{\infty} \lvert w_n^{(\ell)} \rvert \leq \sum_{n=-\infty}^{\infty} \lvert w_n^{(1)} + w_n^{(2)}j \rvert = \sum_{n=-\infty}^{\infty} \lvert w_n \rvert < \infty,
	\]
	and hence \(w^{(1)}, w^{(2)} \in \mathscr{W}_i\), and accordingly the Radon--Nikodym derivative \(W_i\) of \(M_i\) lies in \(\mathscr{W}_i^{2\times2}\).

	By the matrix Baxter's theorem of \cite{DK16}, then, \(M_i\) has summable Verblunsky coefficients \((\a_n)_{n=0}^{\infty}\); by the discussion of \Cref{subsec:ConsistencyVCs} and the same calculation as in the proof of \Cref{thm:Baxter1}, we at last see that
	\[
		\sum_{n=0}^{\infty} \lvert \g_n \rvert = \sum_{n=0}^{\infty} \lVert\a_n\rVert < \infty.
	\]
\end{proof}

We complete the paper by combining the previous two theorems to state a full quaternionic analogue of Baxter's theorem.

\begin{corollary}
\label{cor:QuaternionicBaxter}
    Let \(c = (c_n)_{n=-\infty}^{\infty}\) be a non-trivial positive definite sequence, fix orthogonal \(i,j \in \SS\) and let \(\mu_i\) be the \(q\)-positive measure on \(\TT_i\) with moment sequence \(c\). Then the Verblunsky coefficients of \(c\) are summable if and only if \(w\) is absolutely continuous and its Radon--Nikodym derivative \(w\) is the restriction to \(\TT_i\) of a quaternionic Wiener algebra function.
\end{corollary}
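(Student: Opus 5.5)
The corollary is simply the conjunction of \Cref{thm:Baxter1} and the converse theorem immediately preceding it, so the plan is to split the biconditional into its two directions and check that each one is supplied by one of those results under the present hypotheses. (The statement's ``\(w\) is absolutely continuous'' should be read as ``\(\mu_i\) is absolutely continuous''.)

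For the forward direction, assume \(\sum_{n} \lvert \g_n \rvert < \infty\). \Cref{thm:Baxter1} then gives two things:
\begin{itemize}
\item \(\mu_i\) is absolutely continuous;
\item its Radon--Nikodym derivative \(w = \D_{11} + \D_{12} j\) has summable \(\HH\)-valued Fourier coefficients, so the function \(\tilde{w}(p) = \sum_{n} p^n (\D_{11}^{(n)} + \D_{12}^{(n)} j)\) lies in \(\mathscr{W}_\HH\) and satisfies \(\tilde{w}\vert_{\TT_i} = w\).
\end{itemize}
This is exactly the statement that \(w\) is the restriction to \(\TT_i\) of a quaternionic Wiener algebra function. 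One small point: \Cref{thm:Baxter1} is phrased for a ``non-trivial positive semidefinite'' sequence, while the corollary assumes a non-trivial positive definite sequence. These notions agree, since non-triviality means every \(T_n(c)\) is positive definite, so the hypothesis applies.

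For the reverse direction, assume \(\rmd\mu_i = w \, \frac{\rmd\th}{2\pi}\) with \(w = \tilde{w}\vert_{\TT_i}\) for some \(\tilde{w} \in \mathscr{W}_\HH\). To invoke the converse theorem we need the coefficient bound \(\sum_{n} \lvert w_n^{(\ell)} \rvert < \infty\) for \(\ell = 1,2\), where \(w = w^{(1)} + w^{(2)} j\). The Fourier coefficients of \(w\) on \(\TT_i\) are exactly the quaternionic coefficients of \(\tilde{w}\), because restricting \(\sum_n p^n w_n\) to \(p = e^{i\th}\) just evaluates the series; these coefficients are summable. Splitting each one as \(w_n = w_n^{(1)} + w_n^{(2)} j\) with \(w_n^{(1)}, w_n^{(2)} \in \CC_i\) and using \(\lvert w_n^{(\ell)} \rvert \le \lvert w_n \rvert\) gives the bound. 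The converse theorem then yields \(\sum_n \lvert \g_n \rvert = \sum_n \lVert \a_n \rVert < \infty\).

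The only step needing any care is this reconciliation of ``\(w \in \mathscr{W}_\HH\)'' in the converse theorem with ``\(w\) is the restriction of an element of \(\mathscr{W}_\HH\)'' in the corollary. It is settled by the uniqueness-of-extension argument at the end of the proof of \Cref{thm:Baxter1}: two elements of \(\mathscr{W}_\HH\) agreeing on \(\TT_i\) have the same Fourier coefficients and therefore coincide. So the two formulations are interchangeable, and combining the two directions proves the corollary.
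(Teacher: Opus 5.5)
Your argument is the paper's own: the corollary is \Cref{thm:Baxter1} combined with the converse theorem stated immediately before it, and your forward direction is fine. The reverse direction, however, has a genuine gap. It is not the restriction-versus-membership point you flag; there you only need some extension of $w$ to exist, which is your hypothesis. The gap is the step ``the converse theorem then yields $\sum_n\lvert\gamma_n\rvert<\infty$''. That theorem feeds $W_i\in\mathscr{W}_i^{2\times2}$ into the matrix Baxter theorem (Geronimo; Dym--Kimsey). But that theorem also requires $W_i(e^{i\theta})$ to be positive definite for every $\theta$, which is the matricial form of condition ii) in the classical Baxter theorem quoted in the introduction.

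Without that condition the implication is false. Take $c_0=1$, $c_{\pm1}=-\tfrac12$ and $c_n=0$ otherwise.
\begin{itemize}
\item This is a non-trivial positive definite sequence: each $T_n(c)$ is real tridiagonal with eigenvalues $1-\cos\frac{k\pi}{n+2}>0$, $k=1,\dots,n+1$.
\item The measure is $\rmd\mu_i=(1-\cos\theta)\,\frac{\rmd\theta}{2\pi}$ with $\mu_i^{(2)}=0$, and its density is the restriction to $\TT_i$ of $1-\tfrac12(p+p^{-1})\in\mathscr{W}_\HH$.
\item The moments are real, so $M_i=\mu_i I_2$, and its Verblunsky coefficients are the classical ones times $I_2$.
\item Here $F(z)=1-z$ and the Schur iterates are $f_n(z)=-1/\big((n+2)-(n+1)z\big)$. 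Hence $\lvert\gamma_n\rvert=\lVert\alpha_n\rVert=1/(n+2)$, which is not summable.
\end{itemize}
This is exactly what classical Baxter predicts, since the weight vanishes at $\theta=0$.

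So the reverse implication needs an extra hypothesis: $W_i(e^{i\theta})\succ0$ for all $\theta$. Writing $w=w^{(1)}+w^{(2)}j$, this says $w^{(1)}(e^{i\theta})\,w^{(1)}(e^{-i\theta})>\lvert w^{(2)}(e^{i\theta})\rvert^2$. With it, the matrix Baxter theorem genuinely applies to $M_i$, and $\lVert\alpha_n\rVert=\lvert\gamma_n\rvert$ finishes the proof. The same condition also belongs in the conclusion of the forward direction, where it comes for free: the proof of \Cref{thm:Baxter1} gives $\Delta_i=Q_i^*Q_i$ with $Q_i^{\pm1}\in\mathscr{W}_{+,i}^{2\times2}$, so $\Delta_i(e^{i\theta})$ is invertible for every $\theta$.

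The paper's one-line proof has exactly the same defect, inherited from its converse theorem. Testing the corollary on the case $\mu_i^{(2)}=0$, which must reproduce the classical Baxter theorem including its positivity condition, exposes it immediately.
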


\nocite{Goh86}
\bibliographystyle{plain}

\end{document}